\theoremstyle{plain}
\newtheorem{lemma}{Lemma}[section]
\newtheorem{theorem}[lemma]{Theorem}
\newtheorem{definition}[lemma]{Definition}
\newtheorem{remark}[lemma]{Remark}
\newtheorem{proposition}[lemma]{Proposition}
\newtheorem{assumption}{Assumption}
\newtheorem{example}{Example}
\begin{document}
\title[Schemes for HJE on Wasserstein space on graphs]{Finite difference schemes for Hamilton--Jacobi equation on Wasserstein space on graphs}
\author{Jianbo Cui, Tonghe Dang, Chenchen Mou}
\address{Department of Applied Mathematics, The Hong Kong Polytechnic
University, Hung Hom, Kowloon, Hong Kong; Department of Mathematics, City University of Hong Kong, Hong Kong, SAR, China.}
\email{jianbo.cui@polyu.edu.hk;tonghe.dang@polyu.edu.hk(Corresponding author);chencmou@cityu.edu.hk}
\thanks{This research is partially supported by MOST National Key R\&D Program No. 2024YFA1015900, the Hong Kong Research Grant Council GRF grant 15302823, NSFC grant 12301526, NSFC/RGC Joint Research Scheme N PolyU5141/24, and the CAS AMSS-PolyU Joint Laboratory of Applied Mathematics.}
\begin{abstract}
This work proposes and studies numerical schemes for initial value problems of Hamilton--Jacobi equations (HJEs) with a graph individual noise on the Wasserstein space on graphs. Numerically solving such equations is particularly challenging due to the structural complexity caused by discrete geometric derivatives and logarithmic geometry. Our numerical schemes are constructed using finite difference approximations that are adapted to both the discrete geometry of graphs and the differential structure of Wasserstein spaces. To ensure numerical stability and accuracy of numerical behavior, we use extrapolation-type techniques to simulate the numerical solution on the boundary of density space. By analyzing approximation error of Wasserstein gradient of the viscosity solution, we prove the uniform convergence of the schemes to the original initial value problem, and establish an $L^{\infty}_{\mathrm{loc}}$-error estimate of order one-half. Several numerical experiments are presented to illustrate our theoretical findings and to study the effect of individual noise and Hamiltonians on graphs. To the best of our knowledge, this is the first result on numerical schemes for HJEs on the Wasserstein space with a graph structure.  
\end{abstract}
\keywords {Hamilton--Jacobi equation $\cdot$ Numerical  schemes $\cdot$ Wasserstein space on graphs $\cdot$ Convergence $\cdot$ Error estimate}
\maketitle
\section{Introduction} 

Hamilton–Jacobi equations (HJEs) originated in classical mechanics as a reformulation of Hamiltonian dynamics and has since become central in fields such as dynamic programming, geometric optics, and optimal control. In recent years, motivated by applications in
mean field control and potential mean field game problems \cite{CardaliaguetPorretta,CarmonaDelarue1,CarmonaDelarue2,MFG_Caines,LasryLions}, the study of HJEs  has been extended to the Wasserstein space, which provides a natural setting for analyzing
dynamics over probability measures. Significant progress has been made about HJEs on Wasserstein space in the continuum setting in recent years, including the development of viscosity solution theories based on sub- and super- differentials \cite{GNT}, as well as L-viscosity solution framework that lifts the equation to a Hilbert space of $L^2$ random variables \cite{CarmonaDelarue1,CarmonaDelarue2,GT}. We refer to \cite{BCP,BFY,CDJS,CGKPR,Daudin,Daudin2,approximation_HJB,approximation_HJB2,Zhang} for further developments of the HJE on Wasserstein space in the continuum setting. In parallel, the increasing importance of graph-structured data in modern applications has spurred interest in discrete partial differential equations on graphs. Although discrete optimal transport, gradient flows, and Hamiltonian systems on graphs have been actively explored \cite{Chow2,Cui_MC22,Cui_Hamiltonian,Maas,Mielke}, the study of HJEs on the Wasserstein space on graphs remains in its early stages.

In this paper, we focus on the numerical analysis of the following HJE \cite{MCC} on the Wasserstein space on a finite graph $G=(V,E,\omega)$ 
 \begin{align}\label{HJeq1}
\partial_tu(t,\xi)+\mathcal H(\xi,\nabla_{\mathcal W}u(t,\xi))+\mathcal F(\xi)=\Delta_{\mathrm{ind}}u(t,\xi),\quad u(0,\xi)=\mathcal U_0(\xi),
\end{align}
where $\mathcal U_0$ is the initial value,  $\mathcal F$ is the potential function, and $\mathcal H$ is the Hamiltonian function. 
Here, $t\in(0,T)$ with some $T>0,$ and the state variable $\xi$ belongs to the probability simplex of the graph, i.e., $\xi\in\mathcal P(G)$, 
where the graph $G$ is weighted, undirected and connected. 
In \eqref{HJeq1}, $\Delta_{\mathrm{ind}}u(t,\xi):=-( \nabla_{\mathcal W}u(t,\xi),\nabla_G\log \xi)_{\xi}$ is the graph individual noise. 
Here, $\nabla_{\mathcal W}$ is the Wasserstein gradient operator on the Wasserstein space on graphs, $(\cdot,\cdot)_{\xi}$ is an inner product acting on the set of skew-symmetric matrices $\mathbb S^{d\times d},$ and $\nabla_G$ is the graph gradient operator, whose adjoint operator for the inner product $(\cdot,\cdot)_{\xi}$ is $-\mathrm{div}_{\xi}$.  This noise term can be associated with a continuous time discrete state Markov chain on the vertex set $V$ (see Section \ref{sec_3}). 
We refer to subsections \ref{notation} and  \ref{eq_assp} for more details on notations, graphs, Hamiltonians and the noise. 
The HJE \eqref{HJeq1} arises from the following mean field control problem on graphs: 
\begin{align*}
\mathcal U(t,\xi)=\inf_{(\sigma,\bar\upsilon)}\Big\{\mathcal U_0(\sigma_0)+\int_0^t(\mathcal L(\sigma_s,\bar\upsilon_s)-\mathcal F(\sigma_s))\mathrm ds:\sigma_t=\xi\Big\},
\end{align*} 
where $\mathcal L(\xi,m)=\sup_{p\in\mathbb S^{d\times d}}\{(p,m)_{\xi}-\mathcal H(\xi,p)\}$ is the Legendre transform of the Hamiltonian $\mathcal H$ in \eqref{HJeq1} (see Assumption \ref{ass_H}), 
and the infimum is performed over admissible pairs $(\sigma,\bar\upsilon)\in W^{1,1}([0,T];\mathcal P(G))\times L^1([0,T];\mathbb S^{d\times d})$ such that control $\bar\upsilon$ satisfies the continuity equation 
$$
\dot{\sigma}+\mathrm{div}_{\sigma}(\bar\upsilon+\nabla_G\log\sigma)=0. 
$$

The HJE \eqref{HJeq1} on Wasserstein space on graphs exhibits distinctive features compared to the continuum counterparts in Euclidean space. 
 First, the interplay between graph geometry and probability simplex constraints introduces structural complexities absent in the classical setting. For instance, a commonly used Hamiltonian function in the Euclidean case may encounter ``blind spots''  on Wasserstein space on graphs when the energy degenerates. In particular, non-zero inputs may yield no effect due to that $\|p\|^2_{\xi}=0$ can hold for $p\neq 0$. 
This complicates the monotonicity analysis of Hamiltonians near the boundary. 
  Second, the graph individual noise, formulated via discrete geometric derivatives and logarithmic geometry,
 alters the structure of the equation. It makes the solution of HJE exclusively defined on the interior of the set of probability measures. Unlike conventional second-order diffusive terms in the classical setting, this graph individual noise acts as a first-order interaction that is deeply intertwined with the discrete graph structure. 
Furthermore, these intrinsic characteristics bring numerical challenges that are more difficult than 
the classical Euclidean case. In the  Euclidean setting, the boundary condition can be naturally defined and used to determine the behavior in the interior domain. While in the graph setting, 
the absence of predefined boundary behavior requires numerical schemes to approximate solutions only based on interior dynamics. 
Therefore, it is desirable to study  
numerical methods that reconcile discrete graph geometry with probability space constraints to ensure stability and accuracy (see Section \ref{sec_3} for details). 

To this end, we develop a discretization framework by transforming the probability constraints into the equivalent Euclidean subspace constraints, to study numerical schemes 
of the HJE on the Wasserstein space on graphs. A prerequisite is establishing the connection between Wasserstein derivatives on graphs and partial derivatives in Euclidean spaces, which is helpful for understanding the relation of difference matrices in different geometries. 
To address the lack of predefined boundary behavior, we use extrapolation-type techniques to simulate the numerical solution on the boundary of density space. Then we present the skew-symmetric difference matrices 
to approximate the Wasserstein gradient of viscosity solution. These matrices are constructed using finite difference methods that are adapted to both the discrete
geometry of graphs and the differential structure of Wasserstein spaces. This allows us to propose two classes of fully discrete numerical schemes on the discretized probability simplex that capture the geometric features of probability spaces. One involves a monotone scheme, which is stable and consistent under a usual CFL condition on the meshsize and time stepsize. The other is an implicit scheme, which is unconditionally stable and free from stepsize restrictions for longtime simulations. 
 Surprisingly, we find that for both schemes it is necessary to treat the noise term and the Hamiltonian jointly when constructing the discrete Hamiltonians on graphs, 
 which is different from the classical setting \cite{Crandall_boun, Soug_boun, boundary1} due to the structural role of the graph individual noise. 
  
Furthermore, we utilize the coordinate transformation to analyze the error of finite difference matrices that approximate the Wasserstein gradient of the solution, as the smoothness property of the Wasserstein gradient on graphs remains unestablished \cite{MCC}. 
With a doubling of variables techniques, we prove that the proposed schemes converge uniformly to the original initial value problem \eqref{HJeq1} in the interior of probability simplex $\mathcal P^{\circ}(G)$. To quantify the approximation error of proposed schemes, we need a more delicate tool to deal with the logarithmic singularity of the graph individual noise, since the viscosity solution is only defined on the interior. In particular, we introduce a new auxiliary barrier function to prevent the grid points of the numerical solution from reaching the boundary. 
Then we propose a boundary assumption on a compact subset $\mathcal P_{\epsilon}(G)$ of the probability simplex such that the monotonicity of the Hamiltonian near the boundary is preserved by the proposed schemes. 
By establishing viscosity inequalities on this compact subset, we obtain the error estimate of a one-half order in $L^{\infty}_{\mathrm{loc}}$. 
Compared to Euclidean counterparts (see e.g. \cite{Crandall_boun, Soug_boun, boundary1}), we find that the noise term is crucial in establishing the monotonicity properties of Hamiltonians near boundaries of $\mathcal P_{\epsilon}(G)$. 
Finally,  several numerical experiments are presented to illustrate the convergence order, the effect of graph individual noise and Hamiltonians on the numerical dynamics.

We would like to mention the connection between the HJE \eqref{HJeq1}, mean field control and potential mean field games, where the value function describes the evolution of population of agents. 
In mean field control, all agents act cooperatively, and the problem evolves into an optimal control formulation on the Wasserstein space. In contrast, mean field games adopt a competitive framework, leading to a fixed-point problem where no individual agent has an incentive
to alter their strategy given the strategies of all others. 
In practice, populations in many models, such as Stag--Hunt game and evolutionary game theory \cite{HJSK}, are often settled on discrete spatial domains like a simple finite graph. 
The graph can be connected to a reversible Markov
chain, 
whose edge is weighted by the transition probability and invariant measure of the process  \cite{GLL}. By introducing the graph structure, the dynamics of discrete-state population distributions can be characterized by HJE \eqref{HJeq1}.

This paper is organized as follows. In Section \ref{sec_3}, we give an intuitive comparison between the HJEs on  Euclidean space and Wassertein space on graphs.  In Section \ref{sec_2}, we present some preliminaries about notations and the well-posedness result for the HJE on the Wasserstein space on graphs. 
In Section \ref{sec_4.1}, we introduce two numerical schemes for the considered HJE. In Section 
\ref{sec_4},  we give 
our main results on the monotonicity, stability, and convergence of numerical solutions. Section \ref{sec_5} is devoted to the convergence analysis of the proposed schemes.  
In Section \ref{sec_6} and Appendix \ref{sec_app2}, we present several numerical experiments to verify our theoretical results.

\section{Intuitive comparison of HJEs on Euclidean space and the Wasserstein space on graphs} \label{sec_3}

In this section, we present three distinctions between HJEs on Euclidean spaces and on the Wasserstein space on graphs. 

 Recall that for classical viscosity solutions, well-posedness can be ensured by explicitly imposing boundary conditions such as Dirichlet, Neumann, or periodic conditions (see  e.g. \cite{boun_sys, boun_lions, boun_06, boun_08, periodic, boundary1}). In contrast, HJE \eqref{HJeq1} on the Wasserstein space on graphs is intrinsically defined within the interior of the probability simplex. Indeed, imposing artificial boundary conditions, such as Dirichlet boundary conditions, on \eqref{HJeq1} can cause solution layers, implying that the solution may not be Lipschitz, as shown in Figure \ref{pic0} (A). Hence, the solution’s behavior at the boundary should be inferred from interior dynamics. 
This indicates us to employ extra interpolation techniques at the boundary to attain a stable and consistent approximation (see Figure \ref{pic0} (B) plotted by monotone method \eqref{explicit1} with linear extrapolation \eqref{boundary}). 

Another distinction comes from the technical treatment of the boundary in the numerical analysis: if the equation holds at interior points in the viscosity sense, it can automatically extend to boundary points. This typically requires monotonicity condition on the Hamiltonian near the boundary (see, e.g.  \cite{Crandall_boun,Soug_boun}). 
To see the difference, we consider a commonly used example in the Euclidean space case:  for $x\in\mathbb R^d,$ \begin{align*}
\mathbb R^d\ni p\mapsto H(x,p)\text{ is radially symmetric with a unique minimum at }p=0;
\end{align*} 
see e.g. \cite{boundary1}.  However, for the Wasserstein space on graphs, 
a key observation is that for fixed $\xi\in\mathcal P(G),$
\begin{align*}
\|p\|^2_{\xi}=\frac12\sum_{(i,j)\in E}p_{i,j}^2g_{i,j}(\xi)=0\text{ may hold for }p\neq 0, 
\end{align*} 
due to the graph structure and the metric tensor of the probability simplex. This complicates the analysis of behaviors of viscosity solutions near the boundary. To deal with this issue, we analyze properties of the graph individual noise and propose a boundary assumption (see Assumption \ref{ass_H2}). Roughly speaking, it requires that as long as $``(p+\nabla_G\log\xi)\cdot \mathbf n<0"$ for the inward normal vector
$\mathbf n$, then $p\to \mathcal H(\xi,p)-\mathcal O_{\xi}(p)$ is nondecreasing in the direction of $-\mathbf n$.

\begin{figure}[H]
\centering

   \subfloat[Graph individual noise, Dirichlet]{
\begin{minipage}[t]{0.35\linewidth}
\centering
\includegraphics[height=4.4cm,width=5.8cm]{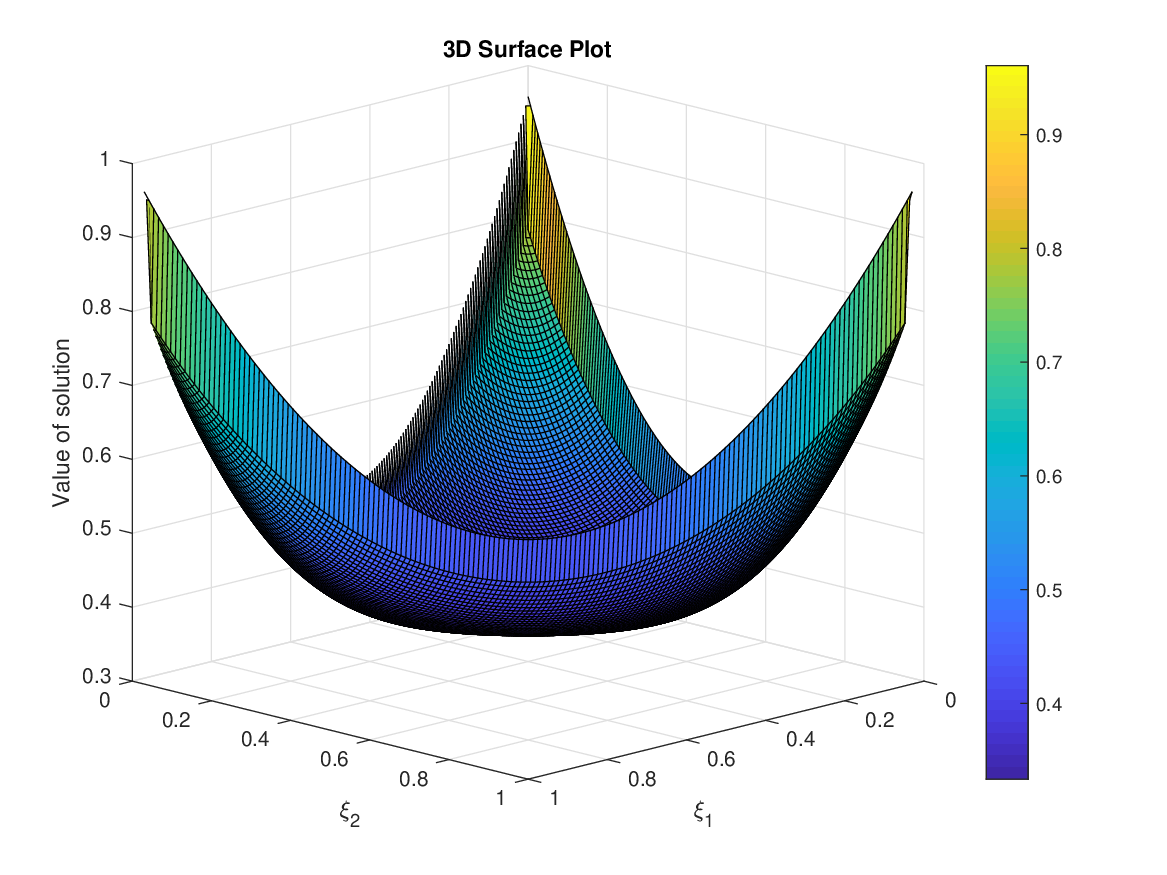}
\end{minipage}
}
\subfloat[Graph individual noise, Extrapolation]{
\begin{minipage}[t]{0.35\linewidth}
\centering
\includegraphics[height=4.4cm,width=5.8cm]{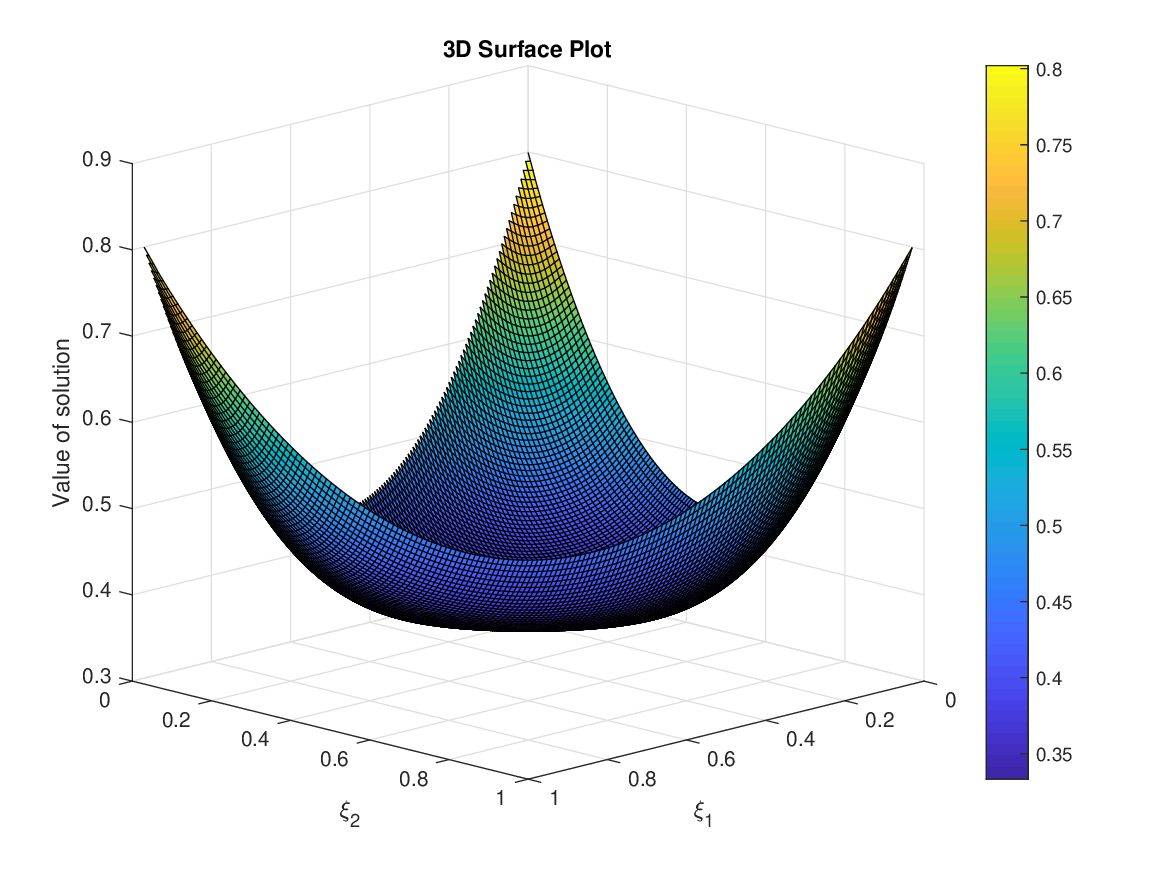}
\end{minipage}
}  

\centering
\caption{Numerical solution for $\mathcal H(\xi,p)=(\sum_{i=1}^3\xi_i^{-1})^{-2}\|p\|_{\xi}^2$, $\frac{\tau}{h}=0.1,h=0.1\times 2^{-4},\epsilon=0.01,$ initial value $\mathcal U_0(\xi)=\|\xi\|^2_{l^2}$.}\label{pic0}
\end{figure}

There are also differences caused by the individual noise operator. 
 We will illustrate this by connecting the operators to the corresponding 
PDE. We take the metric tensor $g$ as  Example \ref{ex_g} (\romannumeral2). It is known from \cite[Section 3.4]{MCC} that the solution of the linear equation 
\begin{align*}
\partial _t\mathcal U=\Delta_{\mathrm{ind}}\mathcal U\;\text{ on }(0,T)\times \mathcal P^{\circ}(G),\quad \mathcal U(0,\cdot)=\mathcal U_0
\end{align*} 
is given by 
$$\mathcal U(t,\xi)=\mathcal U_0(e^{tA}\xi).$$
 Here,  the matrix $A$ is defined as \begin{align*}
A_{i,j}=\omega_{i,j}\text{ if }j\in N(i); \quad A_{i,j}=0\text{ if }j\notin N(i),\;j\neq i;\quad A_{i,j}=-\sum_{k\in N(i)}\omega_{i,k}\text{ if }j=i,
\end{align*}
where $N(i):=\{j\in V:\omega_{i,j}>0\}.$ The matrix $A$ is a $Q$-matrix, and  $e^{tA}$ is a transition matrix. 
According to \cite[Section 2.5]{Markov}, there exists a probability space $(\Omega,\mathcal F,\mathbb P)$ such that for any $\xi\in\mathcal P(G),$ we can find a Markov chain $S:[0,T]\times \Omega\to V$ such that $\mathbb P(S(0,\cdot)=i)=\xi_i$ and 
\begin{align*}
\mathbb P(S(t+h,\cdot)=i|S(t,\cdot)=j)=(e^{h A})_{j,i},\quad t,h\ge 0,\; i,j\in V.
\end{align*}
Set $\sigma_i(t)=\mathbb P(S(t,\cdot)=i)$ for $i\in V.$ It follows from \cite[Section 3.4]{MCC} that 
\begin{align*}
\frac{\mathrm d}{\mathrm dt}\sigma_i(t)=\sum_{j\neq i}A_{j,i}(\sigma_j(t)-\sigma_i(t)),
\end{align*}
which is equivalent to $\frac{\mathrm d}{\mathrm dt}\sigma (t)=\mathrm{div}_{\sigma(t)}(\nabla_G(\log\sigma(t))) $ with $\sigma(t):=(\sigma_1(t),\ldots,\sigma_d(t))\in\mathcal P(G)$ (see Section \ref{notation} for definitions of operators $\mathrm{div}_{\sigma},\nabla_G$).
By the definition of matrix $A$,  the unique solution is given by $\sigma(t)=e^{tA}\xi.$ Since $e^{tA}$ is a transition matrix, there is a function $C_t>0$ such that $(e^{tA}\mu)_i=\sum_{j=1}^n(e^{tA})_{i,j}\xi_j\ge C_t\epsilon,$ if $\xi_i\ge \epsilon$ for all $i\in V.$ Therefore, if the initial value $\xi\in\mathcal P^{\circ}(G),$ we have $\sigma(t)\in\mathcal P^{\circ}(G).$ Hence,  we obtain 
$$\tau_G:=\inf\{t>0:\sigma(t)\in\partial \mathcal P(G),\sigma(0)\in\mathcal P^{\circ}(G)\}=+\infty.$$

In the classical setting, it is well-known that the heat equation $\partial_tu=\frac12\Delta u$ can be linked to Brownian motion $\{B_t\}_{t\ge 0}$ via the Feymann--Kac formula $u(t,x)=\mathbb E\big[u_0(B_t)\mathbf 1_{\{t<\tau_D\}}|B_0=x\big]=e^{-\frac{t\Delta}{2} }u_0(x).
$ Here $D$ is a bounded domain, $\tau_D$ is the first exit time of $B_t$, and $u_0$ is the initial value. The classical isoperimetric
inequality for the exit time of Brownian motion has the form of (see e.g. \cite{exit}) $$\lim_{t\to\infty}\frac 1t\log\mathbb P(\tau_D>t)=-\frac{\lambda_1(D)}{2},$$ where $\lambda_1(D)$ is the first Dirichlet eigenvalue for the Laplacian in $D$. This implies that for any small $\gamma>0,$ there is a constant $T_0>0,$ such that when $t>T_0$, 
$
\mathbb P(\tau_D\leq t)\ge 1-e^{-t(\frac{\lambda_1(D)}{2}-\gamma)}.
$ Hence when $t$ is large, there is a positive probability 
for the heat particles undergoing Brownian motion to reach the boundary of $D$ over a finite time.

\section{preliminaries}\label{sec_2}
In this section, we present the preliminaries for the Wasserstein space of probability measures on a finite graph. Then we present the assumptions and useful properties of \eqref{HJeq1} with a graph individual noise term on the Wasserstein space on graphs.

\subsection{Notations}\label{notation}
 Consider an undirected connected graph $G=(V,E,\omega),$ with no self-loops or multiple edges, where $V=\{1,\ldots,d\}$ with $d\in\mathbb N_+$ is the set of vertices and $E\subset V\times V$ is the set of edges. The weight $\omega=(\omega_{i,j})_{1\le i,j\le d}$ is a $d\times d$ symmetric matrix with nonnegative entries such that $\omega_{i,j}>0$ if $(i,j)\in E.$ 
We denote by $\mathcal P(G)$ the probability simplex 
\begin{align*}
\mathcal P(G)=\Big\{\xi=(\xi_1,\ldots,\xi_d)\in[0,1]^d:\sum_{i=1}^d\xi_i=1\Big\}.
\end{align*}
For fixed $\epsilon\in(0,\frac1d),$
let $\mathcal P_{\epsilon}(G):=\mathcal P(G)\cap [\epsilon,1)^d$. By $\partial A$ and $A^{\circ}$ we denote the boundary and interior of a Borel  set $A$. 
 Then $\mathcal P^{\circ}(G)$ is the interior of $\mathcal P(G),$ $\mathcal P^{\circ}_{\epsilon}(G)$ is the interior of $\mathcal P_{\epsilon}(G)$ for $\epsilon\in(0,\frac1d),$ and  we have $\partial \mathcal P(G)=\mathcal P(G)\backslash \mathcal P^{\circ}(G)$ and $\partial\mathcal P_{\epsilon}(G)=\mathcal P_{\epsilon}(G) \backslash\mathcal P^{\circ}_{\epsilon}(G).$ 

Denote $\mathbb S^{d\times d}$ the set of $d\times d$ skew-symmetric matrices. We introduce a symmetric function $g:[0,\infty)^2 \to[0,\infty)$, i.e., $g(t,r)=g(r,t)$ for $t,r\in[0,\infty)$. 
 For $\xi\in\mathcal P(G),$ we say that $\upsilon,\tilde\upsilon\in\mathbb S^{d\times d}$ are $\xi$-equivalent if $(\upsilon_{i,j}-\tilde\upsilon_{i,j})g_{i,j}(\xi)=0$ for all $(i,j)\in E,$ where $g_{i,j}(\xi):=g(\xi_i,\xi_j).$ This equivalence relation can induce a quotient space on $\mathbb S^{d\times d}$ denoted by $\mathbb H_{\xi}.$ Under certain conditions specified later (see Assumption \ref{assumption_g}), we use $g$ to define a metric tensor on $\mathcal P(G)$ and endow $\mathbb H_{\xi}$ with the inner product and discrete norm 
\begin{align*}
(\upsilon,\tilde\upsilon)_{\xi}:=\frac12\sum_{(i,j)\in E}\upsilon_{i,j}\tilde\upsilon_{i,j}g_{i,j}(\xi),\quad \|\upsilon\|_{\xi}:=\sqrt{(\upsilon,\upsilon)_{\xi}},\;\upsilon,\tilde\upsilon\in\mathbb S^{d\times d}.
\end{align*}
Here the coefficient $\frac12$ accounts for the fact that whenever $(i,j)\in E$ then $(j,i)\in E.$

For the mapping $\phi=(\phi_1,\ldots,\phi_d):V\to\mathbb R^d,$  define its graph gradient as $\nabla_G\phi:=\sqrt{\omega_{i,j}}(\phi_i-\phi_j)_{(i,j)\in E}.$ The adjoint of $\nabla_G$ for the $(\cdot,\cdot)_{\xi}$ inner product is divergence operator $-\mathrm{div}_{\xi}:\mathbb H_{\xi}\to\mathbb R^d$ given by
\begin{align*}
\mathrm{div}_{\xi}(\upsilon):=\Big(\sum_{j=1}^d\sqrt{\omega_{i,j}}\upsilon_{j,i}g_{i,j}(\xi)\Big)_{i=1}^d,\;\upsilon\in\mathbb S^{d\times d}
\end{align*}
such that the integration by parts formula holds, i.e., $(\nabla_G\phi,\upsilon)_{\xi}=-(\phi,\mathrm{div}_{\xi}(\upsilon))$. Here $(\upsilon,\tilde\upsilon):=\frac12 \sum_{(i,j)\in E}\upsilon_{i,j}\tilde\upsilon_{i,j}$ for $\upsilon,\tilde\upsilon\in\mathbb S^{d\times d}$. We use $\|\cdot\|_{\infty}$ to denote the supremum norm of a matrix, and $\|\cdot\|_{l^2}$ to denote the usual Frobenius norm of a matrix.

For $\rho^0,\rho^1\in\mathcal P(G),$ define the $L^2$-Monge--Kantorovich metric 
\begin{align}\label{MK}
\mathcal W(\rho^0,\rho^1):=\Big(\inf_{(\sigma,\upsilon)}\Big\{\int_0^1(\upsilon,\upsilon)_{\sigma}\mathrm dt:\dot{\sigma}+\mathrm {div}_{\sigma}(\upsilon)=0,\;\sigma(0)=\rho^0,\,\sigma(1)=\rho^1\Big\}\Big)^{\frac12},
\end{align}
where the infimum is taken over the set of pairs $(\sigma,\upsilon)$ such that $\sigma\in H^1(0,1;\mathcal P(G))$ and $\upsilon :[0,1]\to\mathbb S^{d\times d}$ is measurable. The probability simplex $\mathcal P(G)$ endowed with the metric $\mathcal W$ is the \textit{Wasserstein space on graphs}, which is denoted by $(\mathcal P(G),\mathcal W)$; see e.g. \cite{Liwuchen,MCC,Maas} for more introduction about this space.  
Throughout this paper, we let the symmetric function $g$ satisfy the following assumption. 
\begin{assumption}\label{assumption_g}
\begin{itemize}
\item[(g-\romannumeral1)] $g$ is continuous on $[0,\infty)^2$ and is smooth on $(0,\infty)^2$;
\item[(g-\romannumeral2)] $t\wedge r\leq g(t,r)\leq t\vee r$ for any $t,r\in[0,\infty);$
\item[(g-\romannumeral3)] $g(\lambda t,\lambda r)=\lambda g(t,r)$ for any $\lambda,t,r\in[0,\infty);$
\item[(g-\romannumeral4)] $g$ is concave;   
\item[(g-\romannumeral5)] $\int_0^1\frac{1}{\sqrt{g(r,1-r)}}\mathrm dr<\infty.$
\end{itemize}
\end{assumption}
 From \cite[Proposition 3.7]{Liwuchen}, we know that (g-\romannumeral5) ensures that $\mathcal W(\rho^0,\rho^1)<\infty$ for any $\rho^0,\rho^1\in\mathcal P(G).$ 
Examples of symmetric function $g$ satisfying  (g-\romannumeral1)--(g-\romannumeral5) are given as follows. 
\begin{example}\label{ex_g}
(\romannumeral1) The average probability weight \cite{Chow1}: $g_1(t,r)=\frac{t+r}{2};$ 

(\romannumeral2) The logarithmic probability weight \cite{Chow2}: 
 $$g_2(t,r)=\frac{t-r}{\log t-\log r},\text{ if }t\neq r;\;g_2(t,r)=0,\text{ if }t=0\text{ or }r=0;\;g_2(t,r)=t,\text{ if }t=r;
$$ (\romannumeral3) The harmonic probability weight \cite{Maas}: 
$$g_3(t,r)=0,\text{ if }t=0\text{ or }r=0;\;g_3(t,r)=\frac{2}{\frac1t+\frac1r},\text{ otherwise}.$$
One can generate more examples by taking convex combinations of $g_l,l=1,2,3.$
\end{example}

With the metric tensor $g$,  one can define the  \textit{$\mathcal W$-differentiability} and the \textit{Wasserstein gradient} $\nabla_{\mathcal W}$ on the Wasserstein space $(\mathcal P(G),\mathcal W)$  \cite[Definition 3.9]{MCC}. For $f:\mathcal P(G)\to\mathbb R$ being $\mathcal W$-differentiable at $\xi\in\mathcal P(G),$ we use the notation $\nabla_{\mathcal W}f(\xi)$ to denote the Wasserstein gradient of $f$ at $\xi\in\mathcal P(G),$ which is a skew-symmetric matrix. For any $(i,j)\in E$ such that $1\leq i<j\leq d,$ we define $e_{i,j}\in\mathbb R^d$ whose $i$th entry is $1$, $j$th entry is $-1$ and other entries are zero. For a function $f:\mathcal P(G)\to\mathbb R,$  define the limit when it exists: 
\begin{align*}
\nabla^{e_{i,j}}f(\xi):=\lim_{t\to0}\frac{f(\xi+te_{i,j})-f(\xi)}{t},\quad \xi\in\mathcal P^{\circ}(G).
\end{align*} Denote by $\frac{\delta f}{\delta \xi}$ the Fr\'echet derivative of $f$. If the Fr\'echet derivative of $f$ exists and is continuous at $\xi\in\mathcal P^{\circ}(G)$, then we have $\nabla_{\mathcal W}f(\xi)=\nabla_{G}\frac{\delta f}{\delta \xi}(\xi),$ and the entries of $\nabla_{\mathcal W}f(\xi)$ are: $\sqrt{\omega_{i,j}}\nabla^{e_{i,j}}f(\xi),1\leq i<j\leq d$, $-\sqrt{\omega_{i,j}}\nabla^{e_{j,i}}f(\xi)$ for $1\leq j<i\leq d.$ 

Throughout this paper, we use $C$ to denote a generic positive constant which may take different values at different appearances. 
Sometimes we write $C(a,b)$ or $C_{a,b}$ to emphasize the dependence on
the parameters $a,b.$ 
 \subsection{Hamilton--Jacobi equation on $(\mathcal P(G),\mathcal W)$}\label{eq_assp} Consider the HJE on the Wasserstein space on  graph:
\begin{align}\label{HJeq}
\partial_tu(t,\xi)+\mathcal H(\xi,\nabla_{\mathcal W}u(t,\xi))+\mathcal F(\xi)=\mathcal O_{\xi}(\nabla_{\mathcal W}u(t,\xi)),\quad u(0,\xi)=\mathcal U_0(\xi),
\end{align}
where $t\in(0,T)$ with some $T>0,$ $\xi\in\mathcal P^{\circ}(G),$ the initial value $\mathcal U_0$ and the potential function $\mathcal F$ are  continuous on $(\mathcal P(G),\mathcal W)$, and $\mathcal O_{\xi}(p)=-( p,\nabla_G\log \xi)_{\xi},\,p\in\mathbb S^{d\times d}$ is the graph individual noise term. The integration by parts formula implies that $\mathcal O_{\xi}(\nabla_{\mathcal W}u):=(\mathrm {div}_{\xi}(\nabla_{\mathcal W}u),\log \xi)=\Delta_{\mathrm{ind}}u(\xi),$ where $\Delta_{\mathrm{ind}}$ is called the graph individual noise operator. 
We note that different from the continuum setting on the Wasserstein space, 
this graph individual noise operator is essentially a first-order differential operator, formulated as a specific combination of Wasserstein derivatives. We now introduce the assumptions on 
the Hamiltonian function $\mathcal H$ and the graph individual noise. \begin{assumption}\label{ass_H}
Fix a constant $\kappa>1$ and assume that there exist a constant $t_{*}>1$ and nonnegative functions $\gamma,\bar{\gamma}\in\mathcal C([0,\infty))$ such that for any $\xi,\eta\in\mathcal P^{\circ}(G)$ and $p\in\mathbb S^{d\times d},$ the following hold: 
\begin{itemize} 
\item[(H-\romannumeral1)] $\mathcal H\in\mathcal C(\mathcal P^{\circ}(G)\times \mathbb S^{d\times d})$ and $\mathcal H(\xi,\cdot)$ is convex. 
\item[(H-\romannumeral2)] $\lim_{t\to1^+}\bar{\gamma}(t)=1,\;\gamma(t)>1$ for any $t\in(1,t_*)$, and we have 
\begin{align*}
t\gamma(t)\mathcal H(\xi,p)\leq \mathcal H(\xi,tp)\leq \bar{\gamma}(t)\mathcal H(\xi,p),\quad \forall \,t>0. 
\end{align*}
\item[(H-\romannumeral3)] For every $\epsilon\in(0,1),$ there exists $\theta_{\epsilon}>0$ such that 
\begin{align*}
\theta_{\epsilon}\|p\|^{\kappa}_{\xi}\leq \mathcal H(\xi,p),\quad\forall\,\xi\in\mathcal P_{\epsilon}(G).
\end{align*} 
\item[(H-\romannumeral4)] Let $\mathcal H(\xi,0)=0$. For fixed $\epsilon\in(0,1)$, there exist a modulus $\mathfrak m_{\epsilon}$
and a constant $C_{\epsilon}>0$ such that 
\begin{align*}
\mathcal H(\xi,p)-\mathcal H(\eta,p)\ge -\mathfrak m_{\epsilon}(\|\xi-\eta\|_{l^2})\|p\|^{\kappa}_{\xi}-C_{\epsilon}\big|\|p\|_{\xi}-\|p\|_{\eta}\big|\big(\|p\|^{\kappa-1}_{\xi}+\|p\|^{\kappa-1}_{\eta}\big),\quad \forall\,\xi,\eta\in\mathcal P_{\epsilon}(G).
\end{align*}
\item[(H-\romannumeral5)] Denote $\mathcal I(\xi):=\sum_{i=1}^d\frac{1}{\xi_i}.$ There exists $C_H>0$ such that  
\begin{align*}
|\mathcal H(\xi,p)|\leq C_H\|p\|^{\kappa}_{\xi}\mathcal I^{-\kappa}(\xi),\quad \forall\,(\xi,p)\in\mathcal P^{\circ}(G)\times \mathbb S^{d\times d}.
\end{align*}
\item[($\mathcal O$-\romannumeral1)] There exists $C_{\mathcal O}\ge0$ such that $|\mathcal O_{\xi}(p)|\leq C_{\mathcal O}\|p\|_{l^2},\quad \forall\,(\xi,p)\in\mathcal P^{\circ}(G)\times\mathbb S^{d\times d}.$  
\end{itemize}\end{assumption}

Recall that a modulus, also known as modulus of continuity, is a function $\mathfrak m:[0,\infty)\to[0,\infty)$ that is increasing such that $\mathfrak m(r)\downarrow  0$ when $r\downarrow  0$. The modulus $\mathfrak m_{\epsilon}$ in (H-\romannumeral4) 
plays an important role in the comparison principle for the HJE; see e.g. \cite{book2011}. 
We remark that for the metric tensor $g$ being functions $ g_2,g_3$ given in Example  \ref{ex_g},  \begin{align}\label{extension}(0,1)^2\ni (t,s)\mapsto \frac{\log t-\log s}{t-s}g(t,s) \text{\; has a continuous extension to }[0,1]^2,
\end{align} which ensures that condition ($\mathcal O$-\romannumeral1) is satisfied for the graph individual noise. 
Below we give an example of the Hamiltonian that satisfies  (H-\romannumeral1)--(H-\romannumeral5). 
\begin{example}\label{exam1} 
Take $\mathcal H(\xi,p):=\mathfrak a(\xi)\|p\|^{\kappa}_{\xi},\;\xi\in\mathcal P^{\circ}(G),p\in\mathbb S^{n\times n}$, where coefficient $\mathfrak a=\mathcal I^{-\kappa}$ and $\kappa>1$. In this case, $\theta_{\epsilon}=C\epsilon^{\kappa}d^{-\kappa},\, C_{\epsilon}=C\kappa d^{-\kappa}, \,m_{\epsilon}(r)=Cr$ for $r\ge 0$ due to the $l^2$-Lipschitz continuity of $\mathcal I^{-\kappa}$, where $C>0$ is a generic constant which may be different from each appearance. 
We refer to \cite[Example 5.1]{MCC} for more details. 
\end{example}

Under Assumption \ref{ass_H}, \eqref{HJeq} is well-posed in the sense of viscosity solution. In fact, the viscosity solution is shown to be bounded and Lipschitz continuous.

\begin{proposition}\label{prop_exact}
\cite[Proposition 6.4, Section 6]{MCC} Let Assumptions \ref{assumption_g}--\ref{ass_H} hold. In addition assume that $\mathcal U_0$ is $l^2$-Lipschitz continuous and $\mathcal F\in\mathcal C(\mathcal P(G)).$ Then there exists a unique  bounded continuous viscosity solution $u$ 
of \eqref{HJeq} on $[0,T)\times \mathcal P^{\circ}(G)$ satisfying that 
\begin{itemize}
\item[(\romannumeral1)] There exists $L_1>0$ such that $|u(t,\xi)-u(r,\xi)|\leq L_1|t-r|$ for all $\xi\in\mathcal P^{\circ}(G), t,r\in[0,T)$;
\item[(\romannumeral2)] For every $\epsilon\in(0,1)$, there exists $L_2:=L_2(\epsilon)>0$ such that $|u(t,\xi)-u(t,\eta)|\leq L_2\|\xi-\eta\|_{l^2}$ for all $t\in[0,T),\xi,\eta\in\mathcal P_{\epsilon}(G).$
\end{itemize}
\end{proposition}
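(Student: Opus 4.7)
My plan is to build the solution through a value function representation coming from the mean field control problem introduced earlier, and then establish uniqueness and regularity via a comparison principle tailored to the singularity of the graph individual noise at $\partial\mathcal P(G)$. For existence, the natural candidate is
\[
\mathcal U(t,\xi) = \inf_{(\sigma,\bar\upsilon)} \Big\{\mathcal U_0(\sigma_0) + \int_0^t \bigl(\mathcal L(\sigma_s, \bar\upsilon_s) - \mathcal F(\sigma_s)\bigr)\,\mathrm ds : \sigma_t = \xi\Big\},
\]
where the continuity equation $\dot\sigma + \mathrm{div}_\sigma(\bar\upsilon + \nabla_G\log\sigma)=0$ already encodes $\mathcal O_\xi$. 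A dynamic programming principle, combined with the Fenchel identity for $\mathcal L$ and $\mathcal H$, yields the subsolution and supersolution viscosity inequalities at interior test points in $\mathcal P^\circ(G)$.

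For uniqueness I would prove a comparison principle. Given a viscosity subsolution $u$ and supersolution $v$ on $[0,T)\times \mathcal P^\circ(G)$ with $u(0,\cdot)\le v(0,\cdot)$, the standard Ishii doubling of variables cannot be applied globally because $\mathcal O_\xi(p)=-(p,\nabla_G\log\xi)_\xi$ blows up as $\xi$ approaches $\partial\mathcal P(G)$. I would therefore introduce a boundary penalty $\beta(\xi):=-\sum_i\log\xi_i$ and maximize
\[
\Phi_{\varepsilon,\delta}(t,s,\xi,\eta) := u(t,\xi)-v(s,\eta) - \tfrac{1}{2\varepsilon}\bigl(|t-s|^2 + \|\xi-\eta\|_{l^2}^2\bigr) - \delta\bigl(\beta(\xi)+\beta(\eta)\bigr) - \alpha t
\]
on $[0,T)\times\mathcal P^\circ(G)\times\mathcal P^\circ(G)$. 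The penalty pushes the maximizer $(\hat t,\hat s,\hat\xi,\hat\eta)$ into some $\mathcal P_\epsilon(G)\times\mathcal P_\epsilon(G)$ with $\epsilon=\epsilon(\delta)$, so the two viscosity inequalities can be subtracted at this interior point; condition (H-iv) controls the $\mathcal H$ difference by $\mathfrak m_\epsilon(\|\hat\xi-\hat\eta\|_{l^2})\|p\|_{\hat\xi}^\kappa$ up to a term controlled by the continuity of $\|\cdot\|_\xi$ in $\xi$, condition ($\mathcal O$-i) controls the noise difference by a multiple of $\|\hat\xi-\hat\eta\|_{l^2}$, and letting $\varepsilon\to 0$ then $\delta\to 0$ and $\alpha\to 0$ gives $u\le v$.

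The Lipschitz estimates follow from the comparison principle by comparing shifted competitors. For (i), I would note that under (H-v), ($\mathcal O$-i), and the $l^2$-Lipschitz continuity of $\mathcal U_0$, the map $\xi\mapsto -\mathcal H(\xi,\nabla_{\mathcal W}\mathcal U_0(\xi)) + \mathcal O_\xi(\nabla_{\mathcal W}\mathcal U_0(\xi)) - \mathcal F(\xi)$ is bounded on each $\mathcal P_\epsilon(G)$ by some constant $L_1$; comparison applied to the pair $u(t+h,\xi)$ and $u(t,\xi)\pm L_1 h$ (after a standard regularization removing the dependence on $\epsilon$ using the $l^2$-Lipschitz bound of $\mathcal U_0$ and the structural bounds on $\mathcal H$) yields $|u(t+h,\xi)-u(t,\xi)|\le L_1 h$ uniformly in $\xi\in\mathcal P^\circ(G)$. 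For (ii), applying the doubling argument to $u(t,\xi)-u(t,\eta) - L_2\psi_\epsilon(\xi,\eta)$ with $\psi_\epsilon$ keeping both points in $\mathcal P_\epsilon(G)$, I would use the coercivity $\theta_\epsilon\|p\|_\xi^\kappa\le\mathcal H(\xi,p)$ from (H-iii) to absorb the modulus $\mathfrak m_\epsilon$ and the noise error, producing a constant $L_2(\epsilon)$ depending only on $\epsilon$, the Lipschitz constant of $\mathcal U_0$, and the structural data.

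The main obstacle is the tight coupling between the singular noise $\mathcal O_\xi$ and the Hamiltonian near $\partial\mathcal P(G)$: one must choose the boundary penalty $\beta$ and cutoffs so as to simultaneously tame the logarithmic blow-up of $\nabla_G\log\xi$ and remain compatible with the convexity and $\kappa$-coercivity structure of $\mathcal H$ on $\mathcal P_\epsilon(G)$, while ensuring that the modulus $\mathfrak m_\epsilon$ in (H-iv) is applied only after the doubling parameter has confined the penalized maximum to the interior. This is precisely what forces the spatial Lipschitz constant in (ii) to depend on $\epsilon$, and why the time Lipschitz constant in (i) can be made uniform only thanks to the scaling conditions (H-ii) and the growth bound (H-v).
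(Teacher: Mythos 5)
This proposition is not proved in the paper; it is imported verbatim from \cite[Proposition 6.4]{MCC}, so there is no in-paper proof to compare your sketch against. That said, your outline (value-function existence from the mean field control formulation, doubling of variables with a boundary barrier for comparison, then Lipschitz estimates by comparison with shifted competitors) is the expected template for this class of results, and it closely parallels the penalization machinery the present paper itself deploys in Section~\ref{sec_5}, where $a\bigl(\mathcal I(\xi)+\mathcal I(\eta)\bigr)$ with $\mathcal I(\xi)=\sum_i 1/\xi_i$ plays exactly the role of your $\delta\bigl(\beta(\xi)+\beta(\eta)\bigr)$.

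There is, however, a genuine gap in how you dispose of the barrier contribution in the noise term. At the penalized maximizer, the test gradient carries an extra term $\delta\nabla_{\mathcal W}\beta(\xi)$, and $\mathcal O_\xi\bigl(\delta\nabla_{\mathcal W}\beta(\xi)\bigr)$ cannot be tamed by ($\mathcal O$-\romannumeral1) alone: $\|\nabla_{\mathcal W}\beta(\xi)\|_{l^2}$ blows up like $\max_i \xi_i^{-1}$, which dominates $\beta(\xi)\sim -\log\min_i\xi_i$, so the estimate $|\mathcal O_\xi(p)|\le C_{\mathcal O}\|p\|_{l^2}$ produces a term that grows faster than the penalty itself and the argument does not close. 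What actually saves the day — and what the present paper uses explicitly (see the invocation of \cite[Example 5.3]{MCC} around \eqref{eq02051}) — is a \emph{sign} condition: $\mathcal O_\xi\bigl(\nabla_{\mathcal W}\mathcal I(\xi)\bigr)\le 0$, so the barrier's contribution to the noise term is harmless. Your logarithmic barrier in fact enjoys the same favorable sign, since $(\nabla_{\mathcal W}\beta(\xi))_{i,j}$ and $(\nabla_G\log\xi)_{i,j}$ have the same sign entrywise, giving $\mathcal O_\xi(\nabla_{\mathcal W}\beta(\xi))=-(\nabla_{\mathcal W}\beta(\xi),\nabla_G\log\xi)_\xi\le 0$; but your proposal never states or uses this, and without it the comparison step collapses. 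You should also say a word about $\mathcal H(\xi,\delta\nabla_{\mathcal W}\beta(\xi))$: here (H-\romannumeral5) combined with $\|\nabla_{\mathcal W}\beta(\xi)\|_\xi\le C\,\mathcal I(\xi)$ gives a bound $C_H\delta^\kappa$ uniform in $\xi$, which is what lets you decouple the barrier from $\mathcal H$ via convexity. A secondary soft spot is the time-Lipschitz estimate: $\mathcal U_0$ is only assumed $l^2$-Lipschitz, not $\mathcal W$-differentiable, so the ``plug $\mathcal U_0$ into the equation'' step requires a smoothing of $\mathcal U_0$ with uniformly bounded Wasserstein gradients before the comparison argument applies; ``standard regularization'' needs to be spelled out since the $l^2$-Lipschitz bound is what makes the resulting constant $L_1$ independent of $\epsilon$.
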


\section{Numerical schemes of HJE on Wasserstein space on graphs}\label{sec_4.1}
 
 In this section, 
we first introduce the spatial meshspace on the Wasserstein space on graphs and present difference matrices  to approximate the Wasserstein gradient of the viscosity solution. Then we introduce the discrete Hamiltonian and two types of numerical schemes on Wasserstein space on graphs. 
\subsection{Meshspace and difference matrix} 
For $\xi\in\mathcal P(G),$ we introduce a transformation as follows:
\begin{align*}
s^0=0,\;s^1=\xi_1,\;s^2=\xi_1+\xi_2,\;\ldots,\;s^d=\sum_{i=1}^d\xi_i=1.
\end{align*} 
This yields the alternative presentation of $\mathcal P(G)$ by nondecreasing $d$-tuples between $0$ and $1$:
\begin{align*}
\widetilde{\mathcal P}(G):=\{s=(s^1,\ldots,s^{d-1})\in\mathbb R^{d-1}:0=s^0\leq s^1\leq\cdots\leq s^{d-1}\leq s^d=1\}.
\end{align*} 
Similarly, for $\epsilon\in(0,\frac1d)$, define $$\widetilde{\mathcal P}_{\epsilon}(G):=\{(s^1,\ldots,s^{d-1})\in\mathbb R^{d-1}:0\leq s^1\leq \cdots\leq s^{d-1}\leq 1-d\epsilon\}.$$ We claim that there exists a bijective mapping $\Pi$ such that $\widetilde {\mathcal P}_{\epsilon}(G)=\Pi \mathcal P_{\epsilon}(G)$ for $\epsilon\in[0,1)$, where we use the convention $\mathcal P_0(G)=\mathcal P(G).$ 
In fact, note that for each $s\in\widetilde{\mathcal P}_{\epsilon}(G),$ it corresponds to a unique point $\Pi^{-1}(s):=(s^1+\epsilon,s^2-s^1+\epsilon,\ldots,s^{d-1}-s^{d-2}+\epsilon,1-s^{d-1}-(d-1)\epsilon)\in\mathcal P_{\epsilon}(G).$ We also use the notation $\xi^s=\Pi^{-1}(s)$ for simplicity.  Conversely, for each given point $\xi\in\mathcal P_{\epsilon}(G)$, we have $\Pi(\xi):=(\xi_1-\epsilon,\xi_2+\xi_1-2\epsilon,\xi_3+\xi_2+\xi_1-3\epsilon,\ldots,\sum_{i=1}^{d-1}\xi_i-(d-1)\epsilon)\in\widetilde{\mathcal P}_{\epsilon}(G)$, where we have used $\xi_i\ge \epsilon,$ $\sum_{i=1}^{d-1}\xi_i-(d-1)\epsilon=1-\xi_d-(d-1)\epsilon\leq 1-d\epsilon$ and $\sum_{i=1}^{j-1}\xi_i-(j-1)\epsilon-(\sum_{i=1}^{j}\xi_i-j\epsilon)=\epsilon-\xi_j\leq  0.$

With the help of the above transformation $\Pi$, we have the following connection between Wasserstein derivatives on graphs and partial derivatives in Euclidean spaces. It  
 plays an important role in the convergence analysis for numerical schemes on  Wasserstein spaces on graphs.

\begin{lemma}\label{lemma1}
For $f\in\mathcal C^1(\mathcal P^{\circ}(G))$, denoting $\tilde f(x)=f(\xi)$ with $x=\Pi(\xi)$, we have that $\nabla_{\mathcal W}f(\xi)$ can be expressed explicitly by the partial derivatives of $\tilde f$ at point $\Pi(\xi)$, and the entries are 
\begin{align*}
\sqrt{\omega_{j,k}}\nabla^{e_{j,k}}f(\xi)=\sqrt{\omega_{j,k}}(\partial_{x_j}+\cdots+\partial_{x_{k-1}})\tilde f(\Pi(\xi)),\quad 1\leq j<k\leq d,\;\;\xi\in\mathcal P^{\circ}(G).
\end{align*}
\end{lemma}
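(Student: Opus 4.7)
The plan is to translate the directional derivative $\nabla^{e_{j,k}} f(\xi)$ on $\mathcal{P}^\circ(G)$ into a directional derivative of $\tilde{f}$ on the Euclidean image $\widetilde{\mathcal{P}}^\circ(G)$ via the chain rule, after computing explicitly how $\Pi$ transports the perturbation direction $e_{j,k}$.

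First I would fix notation: for $\xi \in \mathcal{P}^\circ(G)$ and small $t$, write $\Pi(\xi + t e_{j,k})$ componentwise. By the explicit formula for $\Pi$ given before the lemma, the $i$-th entry equals $\sum_{l=1}^{i}(\xi_l + t (e_{j,k})_l) - i\epsilon = \Pi(\xi)_i + t \sum_{l=1}^{i}(e_{j,k})_l$. Since $e_{j,k}$ has a $+1$ in position $j$, a $-1$ in position $k$, and zeros elsewhere (with $j<k$), the partial sum $\sum_{l=1}^{i}(e_{j,k})_l$ equals $1$ when $j \leq i \leq k-1$ and $0$ otherwise. Hence $\Pi(\xi + t e_{j,k}) = \Pi(\xi) + t\,v_{j,k}$, where $v_{j,k} := \sum_{i=j}^{k-1}\mathbf{e}_i \in \mathbb{R}^{d-1}$ and $\{\mathbf{e}_i\}_{i=1}^{d-1}$ denotes the standard basis.

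Next I would use $\tilde{f}(\Pi(\eta)) = f(\eta)$ for every $\eta$ in a neighborhood of $\xi$ inside $\mathcal{P}^\circ(G)$, so that
\begin{align*}
\nabla^{e_{j,k}} f(\xi) = \lim_{t \to 0} \frac{\tilde{f}(\Pi(\xi) + t v_{j,k}) - \tilde{f}(\Pi(\xi))}{t}.
\end{align*}
Because $\tilde{f} \in \mathcal{C}^1$ on the interior of $\widetilde{\mathcal{P}}(G)$ (a consequence of $f \in \mathcal{C}^1(\mathcal{P}^\circ(G))$ together with the fact that $\Pi$ is an affine diffeomorphism onto its image), the right-hand side is the directional derivative of $\tilde{f}$ at $\Pi(\xi)$ in direction $v_{j,k}$, which equals $\sum_{i=j}^{k-1}\partial_{x_i}\tilde{f}(\Pi(\xi))$. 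Multiplying both sides by $\sqrt{\omega_{j,k}}$ yields the stated formula.

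No deep obstacle is expected: the argument is a bookkeeping exercise on the shift-by-partial-sum structure of $\Pi$. The only subtle point worth verifying is that for $\xi \in \mathcal{P}^\circ(G)$ the perturbed point $\xi + t e_{j,k}$ remains in $\mathcal{P}^\circ(G)$ for $|t|$ small enough (so that $f$ and $\tilde{f}$ are both defined and the chain rule is legitimate), which is immediate because all coordinates of $\xi$ are strictly positive and $\sum_l (e_{j,k})_l = 0$.
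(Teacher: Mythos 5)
Your proof is correct and follows essentially the same route as the paper's: you explicitly compute that $\Pi(\xi+te_{j,k})=\Pi(\xi)+t\,v_{j,k}$ with $v_{j,k}=\sum_{i=j}^{k-1}\mathbf e_i$ (the paper calls this $\vec m_{j,k}$), then pass to the limit in the definition of $\nabla^{e_{j,k}}f$ via the identity $f=\tilde f\circ\Pi$. The extra remarks on why $\xi+te_{j,k}$ stays in $\mathcal P^\circ(G)$ and why $\tilde f\in\mathcal C^1$ are fine but go slightly beyond what the paper records.
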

 For completeness, we put its proof in Appendix \ref{app_vari}. Now we are in a position introduce the discrete probability space and the difference matrix corresponding to the Wasserstein gradient.

Given the space meshsize $h\in(0,\frac13),$  without loss of generality we assume that $N_{h,\epsilon}:=\frac{1-d\epsilon}{h}$ for fixed small $\epsilon>0$ is an integer. 
Let $\vec i=(i_1,\ldots,i_{d-1})\in\{0,1,\ldots,N_{h,\epsilon}\}^{d-1}$ denote the multi-index. 
We introduce the  meshspace with a uniform meshsize as follows:  
\begin{align*}
\widetilde{\mathcal P}^{h}_{\epsilon}(G):=&\Big\{s_{\vec i}=(s^1_{i_1},s^2_{i_2},\ldots,s^{d-1}_{i_{d-1}})\in\mathbb R^{d-1}:\\
&\;\;\;0\leq s^1_{i_1}\leq \cdots\leq s^{d-1}_{i_{d-1}}\leq 1-d\epsilon,\;s^j_{i_j}=i_jh,\,i_j=0,1,\ldots,N_{h,\epsilon},\,j=1,\ldots,d-1\Big\}. 
\end{align*} 
We call $\widetilde{\mathcal P}_{\epsilon}^h(G)$ the discrete probability space under the Euclidean coordinates. Denote the discrete probability space under the Wasserstein coordinates by 
\begin{align*}
\mathcal P_{\epsilon}^h(G):=\big\{\xi\in\mathcal P_{\epsilon}(G): \Pi(\xi)\in\widetilde{\mathcal P}^h_{\epsilon}(G)\big\}.
\end{align*}  
Define the sets of indices 
\begin{align}\label{index_set}&\mathcal N_{\epsilon}:=\{\vec i=(i_1,\ldots,i_{d-1}):\;0\leq i_1\leq \cdots\leq i_{d-1}\leq N_{h,\epsilon},\,s_{\vec i}\in\widetilde{\mathcal P}^h_{\epsilon}(G)\},\notag\\
&\partial\mathcal N_{\epsilon}=\{\vec i\in\mathcal N_{\epsilon}:\xi^{s_{\vec i}}\in {\mathcal P}^h_{\epsilon}(G)\cap\partial{\mathcal P}_{\epsilon}(G),\quad \mathcal N^{\circ}_{\epsilon}:=\mathcal N_{\epsilon}\backslash\partial\mathcal N_{\epsilon}. 
\end{align} 
Namely, we use the index $\vec i\in\mathcal N_{\epsilon}$ to represent the $\vec i$th 
grid point in the meshspace $\widetilde{\mathcal P}^h_{\epsilon}(G),$ and $\vec i\in\partial\mathcal N_{\epsilon}$ (resp. $\vec i\in\mathcal N^{\circ}_{\epsilon}$) to denote the $\vec i$th grid point in the boundary (resp. interior).  It is clear that $|\mathcal N_{\epsilon}|=|\widetilde{\mathcal P}^h_{\epsilon}(G)|=|\mathcal P^h_{\epsilon}(G)|,$ where $|\cdot|$ denotes the cardinal number of a set.  
 When $\epsilon=0,$ we can define $N_{h}=N_{h,0}$ to simplify the notations, similarly, 
 we  can define the meshspaces and index sets $\widetilde{\mathcal P}^h(G),{\mathcal P}^h(G),\mathcal N,\partial\mathcal N,\mathcal N^{\circ}$ in a consistent way.

To distinguish the coordinates in meshspaces, we use $\tilde u(t,\cdot)$ with initial value $\widetilde{\mathcal U}_0:=\mathcal U_0\circ\Pi^{-1}$ to denote the exact viscosity solution on $\widetilde{\mathcal P}_{\epsilon}(G)$, which satisfies that $\tilde u(t,x)=u(t,\Pi^{-1}(x))=u(t,\xi)$ with $x=\Pi(\xi)$. The numerical approximations of $u$ (resp. $\tilde u$) are denoted by  $U$  (resp. $\widetilde U$), respectively. Namely, we have $\widetilde U(t_n,s_{\vec i})=\widetilde U(t_n,\Pi(\xi^{s_{\vec i}}))=U(t_n,\xi^{s_{\vec i}}),\,s_{\vec i}\in\widetilde {\mathcal P}^h_{\epsilon}(G),t_n=n\tau$ with $\tau$ being the time stepsize introduced later. For simplicity we also use notations $\widetilde U^n_{\vec i},U^n_{\vec i}$ to denote them when there is no confusion.

 To  approximate the Wasserstein gradient, we first define two $d\times d$ skew-symmetric difference matrices 
 on the meshspace ${\mathcal P}^h_{\epsilon}(G)$: for $\vec i\in \mathcal N^{\circ}_{\epsilon},$
\begin{align}\label{differ_matrix}
&[D^{\pm} U_{\vec i}]=(0,b_{1,2},b_{1,3},\ldots,b_{1,d};-b_{1,2},0,b_{2,3},\ldots,b_{2,d};\ldots;-b_{1,d},\ldots,-b_{d-1,d},0)\notag\\
&\text{ with entries }
 b_{j,k}=\frac{\sqrt{\omega_{j,k}}}{h}D^{\pm}_{e_{j,k}} U_{\vec i} \text{ for } 1\leq j<k\leq d.
 \end{align}  
 Here,  the forward and backward differences are respectively defined as 
\begin{align}\label{D_notation}
D^{+}_{e_{j,k}} U_{\vec i}:=U(\xi^{s_{\vec i}}+he_{j,k})-U(\xi^{s_{\vec i}}),\quad D^{-}_{e_{j,k}} U_{\vec i}:=U(\xi^{s_{\vec i}})-U(\xi^{s_{\vec i}}-he_{j,k})
\end{align}
when $\xi^{s_{\vec i}}\pm he_{j,k}\notin \mathcal P^h_{\epsilon}(G)\cap \partial \mathcal P_{\epsilon}(G).$ Otherwise, we use the extra interpolation method to define the values of the numerical solution at the boundary. To be specific, for $y\in  \mathcal P^h_{\epsilon}(G)\cap \partial \mathcal P_{\epsilon}(G),$ we use the constant extrapolation \begin{align}\label{boundary1}
U(y):=U(\xi^{s_{\vec i}}) \text{  with }
\xi^{s_{\vec i}}=\arg\min\limits_{\substack{z\in \mathcal P^h_{\epsilon}(G)\backslash\partial \mathcal P_{\epsilon}(G)\text{ or}\\ z\in \partial \mathcal P_{\epsilon}(G), \,U(z)\text{ is known}}}\|y-z\|_{l^2};\end{align}
or use the linear extrapolation  
\begin{align}\label{boundary}
U(y):=2U(\xi^{s_{\vec i}})-U(2\xi^{s_{\vec i}}-y), 
\end{align} 
where $\xi^{s_{\vec i}},2\xi^{s_{\vec i}}-y$ are two nearest interior points or boundary points where $U(\xi^{s_{\vec i}}),U(2\xi^{s_{\vec i}}-y)$ have been defined. We note that the arbitrary selection of defined two points does not affect the convergence analysis. The definition of points near the corner may involve the use of values at the boundary.

To introduce the difference matrices in coordinates of $\widetilde{\mathcal P}^h_{\epsilon}(G),$ for fixed $j,k$ with $1\leq j<k\leq d,$ we introduce the multi-index $\vec{m}_{j,k}:=(m_1,\ldots,m_{d-1})$  such that  $m_l=1$ for $l\in\{j,\ldots,k-1\}$ and $m_l=0$ for $l\notin\{j,\ldots,k-1\}$. 
By virtue of the relation $\Pi^{-1}(s_{\vec i})\pm he_{j,k}=\Pi^{-1}(s_{\vec i}\pm h\vec{m}_{j,k}),$ one has that  
\begin{align}\label{def_DtildeU}
D^+_{e_{j,k}} U_{\vec i}&=\widetilde U(s_{\vec i}+h\vec m_{j,k})-\widetilde U(s_{\vec i})=:D^+_{\vec m_{j,k}}\widetilde U_{\vec i},\notag\\
D^-_{e_{j,k}} U_{\vec i}&=\widetilde U(s_{\vec i})-\widetilde U(s_{\vec i}-h\vec m_{j,k} )=:D^-_{\vec m_{j,k} }\widetilde U_{\vec i}.
\end{align}
Then the equivalent difference matrices of \eqref{differ_matrix} in the coordinate of $\widetilde{\mathcal P}^h_{\epsilon}(G)$ are denoted by $[D^+\widetilde U_{\vec i}], [D^-\widetilde U_{\vec i}],$ whose entries are given by $b_{j,k}=\frac{\sqrt{\omega_{j,k}}}{h}D^{\pm}_{\vec m_{j,k}}\widetilde U_{\vec i}$ for $1\leq j<k\leq d.$

\subsection{Numerical schemes}
In this subsection, we propose two types of numerical schemes for the HJE \eqref{HJeq} on the Wasserstein space on graphs. 

\textbf{Monotone scheme on  Wasserstein space on graphs:} 
We propose spatial-temporal fully discrete monotone methods.  
 Denote the skew-symmetric matrices \begin{align*}&P:=(0,p_{1,2},p_{1,3},\ldots,p_{1,d};-p_{1,2},0,p_{2,3},\ldots,p_{2,d};\ldots;-p_{1,d},\ldots,-p_{d-1,d},0),\\
 &Q:=(0,q_{1,2},q_{1,3},\ldots,q_{1,d};-q_{1,2},0,q_{2,3},\ldots,q_{2,d};\ldots;-q_{1,d},\ldots,-q_{d-1,d},0).
 \end{align*}  
We rewrite the Hamiltonian as $\mathcal H(\xi,P)=\mathcal H(\xi,p_{1,2},p_{1,3},\ldots,p_{d-1,d}):\mathcal P^{\circ}(G)\times \mathbb R^{(d^2-d)/2}\to\mathbb R.$
Define the discrete Hamiltonian on Wasserstein space on graphs $$\mathcal G(\xi,P,Q)=\mathcal G(\xi,p_{1,2},q_{1,2};p_{1,3},q_{1,3};\ldots;p_{d-1,d},q_{d-1,d}):\mathcal P^{\circ}(G)\times \mathbb R^{d^2-d}\to\mathbb R.$$ We impose the following  assumption on $\mathcal G$. 

\begin{assumption}\label{ass_G}
\begin{itemize}
\item[(\romannumeral1)] (Monotonicity) $\mathcal G$ is non-increasing with respect to $p_{k,l}$ and non-decreasing with respect to $q_{k,l},1\leq k<l\leq d.$
\item[(\romannumeral2)] (Consistency) $\mathcal G(\xi,P,P)=\mathcal H(\xi,P)-\mathcal O_{\xi}(P),\;\xi\in\mathcal P^{\circ}(G).$
\item[(\romannumeral3)] (Local Lipschitz property) $\mathcal G$ is locally Lipschitz with respect to  arguments $p_{k,l},q_{k,l},1\leq k<l\leq d$, i.e., for  each fixed $R>0,$ when $\|P\|_{l^2}\vee\|Q\|_{l^2}\vee\|\bar P\|_{l^2}\vee\|\bar Q\|_{l^2}\leq R$, we have 
\begin{align}\label{locally}
|\mathcal G(\xi,P,Q)-\mathcal G(\xi,\bar P,\bar Q)|\leq C_{R}(\|P-\bar P\|_{l^2}+\|Q-\bar Q\|_{l^2}),\quad \xi\in \mathcal P^{\circ}(G)
\end{align}
for some $C_{R}>0$ independent of $\xi$. 
\end{itemize}
\end{assumption} 
Inspired by the Osher--Sethian Hamiltonian in \cite{Osher} and Lax--Friedrichs Hamiltonian (see e.g. \cite{CrandallLions}) in the classical setting, we present two examples of  $\mathcal G$ satisfying Assumption \ref{ass_G} with the Hamiltonian $\mathcal H$ given by Example \ref{exam1}.

\begin{example}\label{exam2}
 Define the function $\mathcal G$ by 
 \begin{align*}
 \mathcal G(\xi,P,Q)=\mathcal G_{\mathcal H}(\xi,P,Q)+\mathcal G_{\mathcal O}(\xi,P,Q),
 \end{align*}
 where 
 \begin{align*}
 &\quad \,\mathcal G_{\mathcal H}(\xi,p_{1,2},q_{1,2};p_{1,3},q_{1,3};\ldots;p_{d-1,d},q_{d-1,d})\\
 &:=\mathcal I^{-\kappa}(\xi)\Big(g_{1,2}\big((p_{1,2}^-)^2+(q_{1,2}^+)^2\big)+g_{1,3}\big((p_{1,3}^-)^2+(q_{1,3}^+)^2\big)+\cdots+g_{d-1,d}\big((p_{d-1,d}^-)^2+(q_{d-1,d}^+)^2\big)\Big)^{\frac{\kappa}{2}},
 \end{align*} 
  and 
 \begin{align*}
 &\quad\,\mathcal G_{\mathcal O}(\xi,p_{1,2},q_{1,2};p_{1,3},q_{1,3};\ldots;p_{d-1,d},q_{d-1,d})\\
 & :=\sqrt{\omega_{1,2}}g_{1,2}(\log\xi_1-\log\xi_2)\big(p_{1,2}\mathbf 1_{\{\xi_1\leq \xi_2\}}+q_{1,2}\mathbf 1_{\{\xi_1> \xi_2\}}\big)\\
 &\quad +\sqrt{\omega_{1,3}}g_{1,3}(\log\xi_1-\log\xi_3)\big(p_{1,3}\mathbf 1_{\{\xi_1\leq \xi_3\}}+q_{1,3}\mathbf 1_{\{\xi_1> \xi_3\}}\big)\\
 &\quad +\cdots+\sqrt{\omega_{d-1,d}}g_{d-1,d}(\log\xi_{d-1}-\log\xi_d)\big(p_{d-1,d}\mathbf 1_{\{\xi_{d-1}\leq \xi_d\}}+q_{d-1,d}\mathbf 1_{\{\xi_{d-1}> \xi_d\}}\big).
  \end{align*} 
  Here, $p_{k,l}^-:=\max\{0,-p_{k,l}\}$, $q_{k,l}^+:=\max\{0,q_{k,l}\}$ for $1\leq k<l\leq d,$ and we call that $\kappa>1$ is given in Example \ref{exam1}, $g$ is the metric tensor and $\omega=(\omega_{i,j})_{i,j}$ is the weight of the graph.  We refer to Appendix \ref{app_vari} for the verification that Assumption \ref{ass_G} is satisfied. 
 \end{example}

 \begin{example}\label{exam3}
 Define the function $\mathcal G_{\mathcal H}$ by 
\begin{align*}
 \mathcal G_{\mathcal H}(\xi,P,Q):=\mathcal H(\xi,\frac{P+Q}{2})-\sum_{1\leq i<j\leq d}\gamma_{i,j}(p_{i,j}-q_{i,j}),
 \end{align*} 
where $\gamma_{i,j}>0$ is such that $\gamma_{i,j}\leq \frac12\sup_{\{\|P\|_{\infty}\leq R\}}|\partial _{p_{ij}} \mathcal H(P)|$ for some fixed $R>0.$ 
Then $\mathcal G:=\mathcal G_{\mathcal H}+\mathcal G_{\mathcal O}$ satisfies Assumption \ref{ass_G}, where $\mathcal G_{\mathcal O}$ is given in Example \ref{exam2}. 
\end{example}  

Let $\tau\in(0,1)$ be the uniform temporal stepsize, and we assume without loss of generality that $N_T:=\frac{T}{\tau}$ is an integer. Set $t_n:=n\tau,n=0,\ldots,N_T.$ 
Introduce the following explicit scheme of \eqref{HJeq},
\begin{align}\label{explicit1}
U^{n+1}_{\vec i}=U_{\vec i}^n-\tau\mathcal F(\xi^{s_{\vec i}})-\tau\mathcal G(\xi^{s_{\vec i}},[D^+U^n_{\vec i}],[D^-U^n_{\vec i}]),  \;\;0\leq n\leq N_{T}-1,\quad  U^0_{\vec i}=\mathcal U_0(\xi^{s_{\vec i}}),
\end{align}
where $\vec i\in\mathcal N^{\circ}$ 
and $\xi^{s_{\vec i}}=\Pi^{-1}(s_{\vec i})\in\mathcal P^h(G)$ with 
 $s_{\vec i}
\in\widetilde{\mathcal P}^h(G).$ For $\vec i\in\partial\mathcal N,$ we use the constant extrapolation \eqref{boundary1} or the linear extrapolation \eqref{boundary} to define the value of $U^{n+1}_{\vec i}.$  
The equivalent formulation of \eqref{explicit1} in  $\widetilde{\mathcal P}^h(G)$ reads as 
\begin{align}\label{explicit}
\widetilde U^{n+1}_{\vec i}=\widetilde U^{n}_{\vec i}-\tau\widetilde{\mathcal F}(s_{\vec i})-\tau \widetilde{\mathcal G}\big({s_{\vec i}}, [D^+\widetilde U^n_{\vec i}],[D^-\widetilde U^n_{\vec i}]\big),\;\;0\leq n\leq N_{T}-1,\quad \widetilde U^0_{\vec i}=\widetilde {\mathcal U}_0(s_{\vec i}),  
\end{align} 
where  $[D^{\pm}\widetilde U^n_{\vec i}]$ are difference matrices whose entrices are given in \eqref{def_DtildeU}, and $\widetilde{\mathcal G}(x,\cdot)=\mathcal G(\xi,\cdot)$ with $x=\Pi(\xi)$. For simplicity, we also denote $ U^{n+1}=\vec{\mathcal G}( U^n)$ and $\widetilde U^{n+1}=\vec{\mathcal G}( \widetilde U^n)$ without distinction when the arguments are not emphasized.

 Note that $U^n(\xi^{s_{\vec i}}+rhe_{j,k})=\widetilde U^n(\Pi^{-1}(\xi^{s_{\vec i}}+rhe_{j,k}))=\widetilde U^n(s_{\vec i}+rh\vec m_{j,k})$.  The monotonicity of the numerical method \eqref{explicit} (see Appendix \ref{sec_app} for the definition of monotone method) can be similarly defined by considering the corresponding numerical solution  
coordinated in $\widetilde{\mathcal P}^h(G)$. From this point onward, we will not distinguish between the monotonicity of \eqref{explicit1} and \eqref{explicit}.

\textbf{Fully-implicit scheme on Wasserstein space on graphs:} 
It is known that explicit methods often require the CFL-type condition (see \eqref{cond_CLF}) on the meshsize and time stepsize to ensure stability and accuracy. To remove this type of condition to allow for large time steps in a long-time simulation \cite{Luo}, we introduce an fully-implicit method for the HJE \eqref{HJeq} as follows  
 \begin{align} \label{implicit1}
 U^{n+1}_{\vec i}= U^{n}_{\vec i}-\tau{\mathcal F}(\xi^{s_{\vec i}})-\tau\mathcal G\big(\xi^{s_{\vec i}}, [D^+ U^{n+1}_{\vec i}],[D^- U^{n+1}_{\vec i}]\big),\; 0\leq n\leq N_{T}-1,\quad  U^0_{\vec i}= \mathcal U_0(\xi^{s_{\vec i}}),
\end{align} 
where $\vec i\in\mathcal N^{\circ}$ and $s_{\vec i}\in\widetilde{\mathcal P}^h(G).$  
For $\vec i\in\partial\mathcal N,$ we use the constant extrapolation \eqref{boundary1} or the linear extrapolation \eqref{boundary} to define the value of $U^{n+1}_{\vec i}.$ 
The equivalent formulation in  $\widetilde{\mathcal P}^h(G)$ is \begin{align}\label{implicit}
\widetilde U^{n+1}_{\vec i}=\widetilde U^{n}_{\vec i}-\tau\widetilde{\mathcal F}(s_{\vec i})-\tau\widetilde{\mathcal G}\big({s_{\vec i}}, [D^+\widetilde U^{n+1}_{\vec i}],[D^-\widetilde U^{n+1}_{\vec i}]\big),\; 0\leq n\leq N_{T}-1,\quad \widetilde U^0_{\vec i}=\widetilde{\mathcal U}_0 (s_{\vec i}).
\end{align}

\section{Main Results}\label{sec_4}

In this section, we first show that the proposed explicit method \eqref{explicit1} is monotone and stable. 
The uniform boundedness of the numerical solution is also proved. Furthermore, we present the convergence and error estimates of the solutions of numerical methods \eqref{explicit1} and \eqref{implicit1}. 

\subsection{Monotonicity and stability}
\label{apriori} Define the set 
\begin{align}\label{setM}
\mathcal M_{0}:=&\Big\{\widetilde U\in L^{\infty}(\widetilde{\mathcal P}^{h}(G)\backslash\partial\widetilde{\mathcal P}(G)): |D^{\pm}_{\vec m_{j,k}}\widetilde U_{\vec i}|\leq Rh,\;1\leq j<k\leq d,\vec i\in\mathcal N^{\circ}\Big\},
\end{align}
 where $D^{\pm}_{\vec m_{j,k}}\widetilde U_{\vec i}$ is given by \eqref{def_DtildeU}.
Due to the $l^2$-Lipschitz continuity of $\mathcal U_0,$ we derive that 
$|D^+_{e_{j,k}}\mathcal U_0|\leq h\mathrm{Lip}(\mathcal U_0)\|e_{j,k}\|_{l^2}\leq Rh$ when $R>1+\sqrt{2}\mathrm{Lip}(\mathcal U_0).$  For the one-step mapping $\vec{\mathcal G}:\mathbb R^{|\mathcal N^{\circ}|}\to \mathbb R^{|\mathcal N^{\circ}|}, \widetilde U^n\mapsto\widetilde U^{n+1},$ from \eqref{explicit} we have that the $\vec i$th entry  is 
 $(\vec{\mathcal G}(\widetilde U^n))_{\vec i}:=\widetilde U^{n}_{\vec i}-\tau\widetilde{\mathcal F}(s_{\vec i})-\tau\widetilde{\mathcal G}\big({s_{\vec i}}, [D^+\widetilde U^{n}_{\vec i}],[D^-\widetilde U^{n}_{\vec i}]\big),\vec i\in\mathcal N^{\circ}$. For $\widetilde U,\widetilde V\in L^{\infty}(\widetilde{\mathcal P}^{h}(G)\backslash\partial\widetilde{\mathcal P}(G) ),$ we use $\widetilde U\leq \widetilde V$ to mean $\widetilde U_{\vec i}\leq \widetilde V_{\vec i}$ for all $\vec i\in\mathcal N^{\circ}.$

 The following lemma gives the monotonicity,  contractivity, and stability properties of the numerical solution. Compared with the Euclidean setting \cite{CrandallLions}, we only need the monotonicity at the interior of probability simplex for  \eqref{explicit1}. We present the detailed proof to Appendix \ref{app_vari} to keep the article self-contained.   

\begin{lemma}\label{lem1} Let Assumptions \ref{assumption_g}--\ref{ass_G} hold, $\mathcal F\in\mathcal C(\mathcal P(G)),$  $\mathcal U_0$ be $l^2$-Lipschitz continuous,  and
let  
 $\frac{\tau}{h}=:\lambda_0$ satisfy  
\begin{align}\label{cond_CLF}
\lambda_0\leq (2C_{dR}(d^2-d)\|\sqrt{\omega}\|_{\infty})^{-1}.  
\end{align} 
with some $R>0$. Then the following properties hold. 
\begin{itemize} 
\item[(\romannumeral1)] Scheme \eqref{explicit1} is monotone on $[-R,R]$; 
\item[(\romannumeral2)] If $\widetilde U,\widetilde V\in\mathcal M_{0}$, then $\|\vec{\mathcal G}(\widetilde U)-\vec{\mathcal G}(\widetilde V)\|_{\infty}\leq \|\widetilde U-\widetilde V\|_{\infty}$;
\item[(\romannumeral3)] $\vec{\mathcal G}(\mathcal M_{0})\subset \mathcal M_{0}$;
\item[(\romannumeral4)] For $n,j\ge 0$ and $\widetilde U\in\mathcal M_{0}$, we have $\|\vec{\mathcal G}^{(n+j)}(\widetilde U)-\vec{\mathcal G}^{(n)}(\widetilde U)\|_{\infty}\leq Kj\tau$, where $K:=\sup_{\{\xi\in\mathcal P^{\circ}(G),\|P\|_{\infty}\leq R,\|Q\|_{\infty}\leq R\}}|\mathcal G(\xi,P,Q)|+\|\mathcal F\|_{L^{\infty}(\mathcal P(G))}.$
\end{itemize}
\end{lemma}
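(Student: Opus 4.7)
I plan to establish the four items in order, with monotonicity (i) as the cornerstone: it yields the $\ell^\infty$-contraction (ii) via a translation-invariance argument, which in turn gives invariance (iii) and, by telescoping, the stability bound (iv).

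For (i), I would view $(\vec{\mathcal{G}}(\widetilde U))_{\vec j}$ as a function of the grid values $\{\widetilde U_{\vec i}\}$ that appear in its evaluation, namely $\vec i=\vec j$ and $\vec i=\vec j\pm h\vec m_{k,l}$ for $1\le k<l\le d$, and check nonnegativity of each coordinate's effect. At a forward neighbor $\vec i=\vec j+h\vec m_{k,l}$, the dependence of $(\vec{\mathcal{G}}(\widetilde U))_{\vec j}$ on $\widetilde U_{\vec i}$ enters only through $p_{k,l}=(\sqrt{\omega_{k,l}}/h)(\widetilde U_{\vec i}-\widetilde U_{\vec j})$; since $\mathcal{G}$ is non-increasing in $p_{k,l}$ by Assumption \ref{ass_G}(\romannumeral1) and $\partial p_{k,l}/\partial\widetilde U_{\vec i}>0$, the contribution of $-\tau\widetilde{\mathcal{G}}$ is non-decreasing in $\widetilde U_{\vec i}$; the backward-neighbor case is symmetric through $q_{k,l}$. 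For the self-coupling $\vec i=\vec j$, the local Lipschitz bound \eqref{locally} controls the combined change of $\widetilde{\mathcal{G}}$ by a multiple of $C_{dR}(d^2-d)\|\sqrt{\omega}\|_\infty(\widetilde V_{\vec j}-\widetilde U_{\vec j})/h$; combined with the explicit $+1$ from the $\widetilde U^n_{\vec j}$ term, the CFL condition \eqref{cond_CLF} renders the net coefficient nonnegative. Since $\widetilde{\mathcal{G}}$ is only locally Lipschitz (not smooth), I would formalize the above as a componentwise finite-increment argument: given $\widetilde U\le\widetilde V$ both with values in $[-R,R]$, change one coordinate at a time from $\widetilde V$ down to $\widetilde U$ and verify via Assumption \ref{ass_G}(\romannumeral1) and \eqref{locally} that each intermediate increment preserves the ordering.

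For (ii), the scheme is translation invariant under constant shifts, since every $D^\pm$ annihilates constants: $\vec{\mathcal{G}}(\widetilde U+c\mathbf 1)=\vec{\mathcal{G}}(\widetilde U)+c\mathbf 1$. Setting $M:=\|\widetilde U-\widetilde V\|_\infty$, the sandwich $\widetilde V-M\mathbf 1\le\widetilde U\le\widetilde V+M\mathbf 1$ holds, with all three vectors in $\mathcal{M}_0$ because constant shifts leave every $D^\pm$ unchanged. Applying monotonicity (i) to each inequality and using the translation identity yields $\vec{\mathcal{G}}(\widetilde V)-M\le\vec{\mathcal{G}}(\widetilde U)\le\vec{\mathcal{G}}(\widetilde V)+M$, i.e., (ii).

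For (iii), I would expand $D^\pm_{\vec m_{j,k}}\vec{\mathcal{G}}(\widetilde U)_{\vec i}$ via the scheme formula into three pieces: the linear-in-$\widetilde U$ piece satisfies $|D^\pm_{\vec m_{j,k}}\widetilde U_{\vec i}|\le Rh$ by hypothesis; the $\widetilde{\mathcal{F}}$ piece is controlled by the (uniform) continuity of $\mathcal{F}$ on the compact mesh; and the $\widetilde{\mathcal{G}}$ piece is controlled by \eqref{locally} once I observe that second-order differences $|D^\pm\widetilde U_{\vec i+h\vec m_{j,k}}-D^\pm\widetilde U_{\vec i}|\le 2Rh$ follow from $\widetilde U\in\mathcal{M}_0$. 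Choosing $R$ large enough to absorb the resulting continuity moduli and Lipschitz constants, together with the CFL condition, gives $|D^\pm\vec{\mathcal{G}}(\widetilde U)|\le Rh$; the boundary extrapolations \eqref{boundary1} and \eqref{boundary} preserve the discrete-Lipschitz bound near $\partial\widetilde{\mathcal{P}}(G)$ by construction. For (iv), the semigroup structure $\vec{\mathcal{G}}^{(n+j)}=\vec{\mathcal{G}}^{(n)}\circ\vec{\mathcal{G}}^{(j)}$ combined with (ii) yields $\|\vec{\mathcal{G}}^{(n+j)}(\widetilde U)-\vec{\mathcal{G}}^{(n)}(\widetilde U)\|_\infty\le\|\vec{\mathcal{G}}^{(j)}(\widetilde U)-\widetilde U\|_\infty$; a telescoping argument and the one-step estimate $\|\vec{\mathcal{G}}(\widetilde W)-\widetilde W\|_\infty\le\tau K$ (immediate from the scheme formula, with (iii) ensuring each iterate remains in $\mathcal{M}_0$) then deliver the $Kj\tau$ bound. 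The main technical obstacle is part (iii): preserving the sharp $\mathcal{M}_0$ bound under one step requires careful accounting of the position-dependence of both $\mathcal{F}$ and $\mathcal{G}$, which couple non-trivially with the second-order differences of $\widetilde U$ and with the boundary extrapolation rules near $\partial\widetilde{\mathcal{P}}(G)$.
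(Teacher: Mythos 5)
Parts (i), (ii), and (iv) of your proposal match the paper's proof in both structure and technique: monotonicity with respect to the off-diagonal arguments comes directly from Assumption~\ref{ass_G}(\romannumeral1), the diagonal is handled via the CFL condition \eqref{cond_CLF} together with the local Lipschitz bound (the paper notes $\|[D^\pm\widetilde U]\|_{l^2}\le d\|[D^\pm\widetilde U]\|_\infty\le dR$, giving the constant $C_{dR}$); contraction in (ii) follows from $\vec{\mathcal G}(\widetilde U+\lambda)=\vec{\mathcal G}(\widetilde U)+\lambda$ plus monotonicity; and (iv) is telescoping via (ii) and (iii).

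The divergence is in part (iii), and there your argument has a genuine gap. You decompose $D^\pm_{\vec m_{j,k}}\vec{\mathcal G}(\widetilde U)_{\vec i}$ into the linear-in-$\widetilde U$ piece, the $\widetilde{\mathcal F}$ piece, and the $\widetilde{\mathcal G}$ piece, and bound them separately. But the linear piece is already bounded only by $Rh$ (tight in general for $\widetilde U\in\mathcal M_0$), while the $\widetilde{\mathcal F}$ and $\widetilde{\mathcal G}$ pieces contribute additional error whose sign you do not control. The sum of your three bounds therefore exceeds $Rh$ in the worst case, and you cannot repair this by ``choosing $R$ large enough'': $R$ enters the CFL constraint through $C_{dR}$, so enlarging $R$ only tightens the admissible $\lambda_0$ and does nothing to restore a bound of the form $Rh$ with the \emph{same} $R$. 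The additivity of your decomposition loses exactly the cancellation you need. The paper's proof of (iii) takes a different and sharper route: it introduces the shift $(\tau_{\vec m_{j,k}}\widetilde U)_{\vec i}=\widetilde U_{\vec i+\vec m_{j,k}}$, uses the commutation $\tau_{\vec m_{j,k}}\vec{\mathcal G}(\widetilde U)=\vec{\mathcal G}(\tau_{\vec m_{j,k}}\widetilde U)$, and then applies (ii) to conclude
\begin{align*}
\|D^+_{\vec m_{j,k}}\vec{\mathcal G}(\widetilde U)\|_\infty
=\|\vec{\mathcal G}(\tau_{\vec m_{j,k}}\widetilde U)-\vec{\mathcal G}(\widetilde U)\|_\infty
\le \|\tau_{\vec m_{j,k}}\widetilde U-\widetilde U\|_\infty
=\|D^+_{\vec m_{j,k}}\widetilde U\|_\infty\le Rh,
\end{align*}
with boundary points handled directly by the extrapolation rules \eqref{boundary1}--\eqref{boundary}. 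This exploits the non-expansiveness of the full one-step map rather than estimating pieces separately; you should adopt this argument for (iii) (and note in passing that the commutation identity tacitly uses the spatial regularity of $\widetilde{\mathcal F}$ and $\widetilde{\mathcal G}$, which is worth making explicit).
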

 Based on Lemma \ref{lem1}, we have the following uniform boundedness of the solution of \eqref{explicit1}.
\begin{proposition}\label{mono1}
Let Assumptions \ref{assumption_g}--\ref{ass_G} hold, $\mathcal F\in\mathcal C(\mathcal P(G)),$ $\mathcal U_0$ be $l^2$-Lipschitz continuous, and  
\eqref{cond_CLF} hold with some $R>1+\sqrt 2\mathrm{Lip}(\mathcal U_0)$. Then there exists some positive constant $C:=C(\|\mathcal F\|_{L^{\infty}(\mathcal P(G))},\|\mathcal U_0\|_{L^{\infty}(\mathcal P(G))})$ such that the solution of method \eqref{explicit1} satisfies $$\sup\limits_{\vec i\in\mathcal N^{\circ},\xi^{s_{\vec i}}\in\mathcal P^h(G)}|U^n_{\vec i}|\leq C(t_n+1).$$ 
\end{proposition}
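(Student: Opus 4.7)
The plan is to combine the four properties of Lemma \ref{lem1} with a triangle inequality, using the initial datum $\widetilde U^0 = \widetilde{\mathcal U}_0$ as the reference point. First I would verify that $\widetilde U^0\in\mathcal M_{0}$. Because $\mathcal U_0$ is $l^2$-Lipschitz and $\xi^{s_{\vec i}\pm h\vec m_{j,k}}=\xi^{s_{\vec i}}\pm he_{j,k}$ with $\|e_{j,k}\|_{l^2}=\sqrt{2}$, one gets
\[
|D^{\pm}_{\vec m_{j,k}}\widetilde U^0_{\vec i}|=|\mathcal U_0(\xi^{s_{\vec i}}\pm he_{j,k})-\mathcal U_0(\xi^{s_{\vec i}})|\leq \sqrt{2}\,\mathrm{Lip}(\mathcal U_0)\,h\leq Rh,
\]
since by hypothesis $R>1+\sqrt{2}\,\mathrm{Lip}(\mathcal U_0)$. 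This places $\widetilde U^0$ in $\mathcal M_{0}$ so that Lemma \ref{lem1} applies.

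Next I would iterate the one-step map. By Lemma \ref{lem1}(\romannumeral3), $\vec{\mathcal G}(\mathcal M_{0})\subset \mathcal M_{0}$, so an easy induction gives $\widetilde U^n=\vec{\mathcal G}^{(n)}(\widetilde U^0)\in\mathcal M_{0}$ for every $n\ge 0$. With Lemma \ref{lem1}(\romannumeral4) applied with base index $0$ and increment $j=n$,
\[
\bigl\|\widetilde U^n-\widetilde U^0\bigr\|_{\infty}=\bigl\|\vec{\mathcal G}^{(n)}(\widetilde U^0)-\widetilde U^0\bigr\|_{\infty}\leq K n\tau=K t_n,
\]
where $K$ depends only on $\mathcal G$, $\mathcal F$ and the bound $R$, hence ultimately on $\|\mathcal F\|_{L^\infty(\mathcal P(G))}$ and $\mathrm{Lip}(\mathcal U_0)$ through $R$.

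Finally, the triangle inequality yields, for every $\vec i\in\mathcal N^{\circ}$,
\[
|U^n_{\vec i}|=|\widetilde U^n_{\vec i}|\leq |\widetilde U^0_{\vec i}|+Kt_n\leq \|\mathcal U_0\|_{L^\infty(\mathcal P(G))}+Kt_n,
\]
which gives the desired bound $C(t_n+1)$ upon setting $C:=\max\{K,\|\mathcal U_0\|_{L^\infty(\mathcal P(G))}\}$. The argument is essentially a short bookkeeping exercise; the only subtle point is making sure that the initial data really lies in the invariant set $\mathcal M_0$ on which the contractivity/increment estimates of Lemma \ref{lem1} are valid, which is why the hypothesis $R>1+\sqrt{2}\,\mathrm{Lip}(\mathcal U_0)$ appears. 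Aside from that, no additional analytic work beyond Lemma \ref{lem1} is needed.
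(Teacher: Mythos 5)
Your argument is correct as a boundedness proof, and it uses the same family of tools (Lemma \ref{lem1} together with the fact that $\widetilde U^0\in\mathcal M_0$), but it takes a genuinely different route from the paper and — importantly — it does not quite deliver the constant dependence stated in the proposition.

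The paper's proof first applies the contractivity Lemma \ref{lem1}(\romannumeral2) to compare $\vec{\mathcal G}^{(n)}(\widetilde{\mathcal U}_0)$ against $\vec{\mathcal G}^{(n)}(0)$, giving $\|\vec{\mathcal G}^{(n)}(\widetilde{\mathcal U}_0)\|_\infty \leq \|\widetilde{\mathcal U}_0\|_\infty + \|\vec{\mathcal G}^{(n)}(0)\|_\infty$, and then it bounds $\vec{\mathcal G}^{(n)}(0)$ \emph{directly} by exploiting the consistency $\mathcal G(\xi,P,P)=\mathcal H(\xi,P)-\mathcal O_\xi(P)$ and $\mathcal H(\xi,0)=0$ to conclude $\|\vec{\mathcal G}^{(n)}(0)\|_\infty\leq n\tau\|\mathcal F\|_{L^\infty}$. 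This is precisely what makes the final constant depend only on $\|\mathcal F\|_{L^\infty(\mathcal P(G))}$ and $\|\mathcal U_0\|_{L^\infty(\mathcal P(G))}$, as claimed.

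You instead appeal to Lemma \ref{lem1}(\romannumeral4) with base $\widetilde U^0$, which forces you to carry the constant
\[
K=\sup_{\{\xi\in\mathcal P^\circ(G),\,\|P\|_\infty\leq R,\,\|Q\|_\infty\leq R\}}|\mathcal G(\xi,P,Q)|+\|\mathcal F\|_{L^\infty(\mathcal P(G))}.
\]
That supremum of $|\mathcal G|$ depends on the choice of discrete Hamiltonian and on $R$, and $R$ is pinned to $\mathrm{Lip}(\mathcal U_0)$, not to $\|\mathcal U_0\|_{L^\infty}$. You in fact acknowledge this (``hence ultimately on $\|\mathcal F\|_{L^\infty}$ and $\mathrm{Lip}(\mathcal U_0)$''), but that is a \emph{different} dependence from the one claimed in the proposition, where $C=C(\|\mathcal F\|_{L^\infty(\mathcal P(G))},\|\mathcal U_0\|_{L^\infty(\mathcal P(G))})$. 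So while your argument does establish the $O(t_n+1)$ growth, it misses the specific — and cleaner — form of the constant; to recover it you would need to follow the paper's route of contracting against the zero iterate and then computing $\vec{\mathcal G}^{(n)}(0)$ explicitly via the consistency and normalization assumptions on $\mathcal H$.
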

\begin{proof}
It suffices to prove the boundedness of the solution $\widetilde U^n.$ 
By Lemma \ref{lem1} (\romannumeral2), we have 
\begin{align*}
\|\vec{\mathcal G}^{(n)}(\widetilde {\mathcal U}_0)\|_{\infty}&\leq \|\vec{\mathcal G}^{(n)}(\widetilde  {\mathcal U}_0)-\vec{\mathcal G}^{(n)}(0)\|_{\infty}+\|\vec{\mathcal G}^{(n)}(0)\|_{\infty}\leq \|\widetilde {\mathcal U}_0\|_{\infty}+
\|\vec{\mathcal G}^{(n)}(0)\|_{\infty}.
\end{align*}
Noting that the potential function $\mathcal F$ is independent of $\widetilde U,$ it follows from \eqref{explicit}, the consistency of $\mathcal G$, and $\mathcal H(\xi,0)=0$ (see Assumption \ref{ass_H} (H-\romannumeral4)), we have  
$
\vec{\mathcal G}^{(2)}(0)=\vec{\mathcal G}(-\tau \mathcal F(\cdot))=-2\tau\mathcal F(\cdot)\leq 2\tau\|\mathcal F\|_{L^{\infty}(\mathcal P(G))}.
$
By iterations, we derive $
\|\vec{\mathcal G}^{(n)}(0)\|_{\infty} \leq n\tau \|\mathcal F\|_{L^{\infty}(\mathcal P(G))}.$ 
This finishes the proof. 
\end{proof}
\begin{remark}\label{remark1}
By implementing the extrapolation-type  boundary 
condition (see \eqref{boundary1} or  \eqref{boundary}), one can also derive the uniform boundedness for the numerical solution on the whole probability simplex: 
\begin{align}\label{whole_bound}\sup_{\vec i\in\mathcal N,\xi^{s_{\vec i}}\in\mathcal P^h(G)}|U^n_{\vec i}|\leq C(t_n+1).\end{align} 
Furthermore, when  replacing  $\mathcal P^h(G)$ ($\widetilde{\mathcal P}^h(G)$)  by $\mathcal P^h_{\epsilon}(G)$ ($\widetilde{\mathcal P}^h_{\epsilon}(G)$) for some fixed $\epsilon\in(0,\frac1d),$  we can analogously define the set 
$\mathcal M_{\epsilon}$ in a manner consistent with \eqref{setM}. 
Similarly, 
Lemma \ref{lem1} and Proposition \ref{mono1} also holds when replacing $\mathcal P^h(G)$ ($\widetilde{\mathcal P}^h(G)$)  by $\mathcal P^h_{\epsilon}(G)$ ($\widetilde{\mathcal P}^h_{\epsilon}(G)$). Moreover, similar to \eqref{whole_bound}, we also have 
\begin{align}\label{whole_bound2}
\sup_{\vec i\in\mathcal N_{\epsilon},\xi^{s_{\vec i}}\in\mathcal P^h_{\epsilon}(G)}|U^n_{\vec i}|\leq C_{\epsilon}(t_n+1). 
\end{align} In the compact subset case, the constant $C_R$ in the local Lipschitz condition in Assumption \ref{ass_G} (\romannumeral3) can be replaced by $C_{R,\epsilon}$. This allows for more robustness on the choice of metric tensor $g$, as the extension condition \eqref{extension} is not needed.  
We mention that in convergence analysis, the  
boundedness established over $\mathcal P^h(G)$ (i.e., \eqref{whole_bound})  
is employed to prove the convergence of numerical methods, and that over the compact subset $\mathcal P^h_{\epsilon}(G)$ is utilized to derive the error estimates of the methods. 
\end{remark}

\subsection{Convergence} This subsection presents the convergence of the numerical solutions. We first show that the numerical solution converges uniformly in the interior of probability simplex to the original initial value problem.   
Then we can obtain the error estimate of order $\frac12$ for the proposed  schemes, when restricted to the compact subset $\mathcal P_{\epsilon}(G)$ with some fixed $\epsilon\in(0,\frac1d)$.   The proofs are postponed to Section \ref{sec_5}. 
When presenting the error estimate of the numerical schemes on $\mathcal P_{\epsilon}(G)$, we require 
the following monotonicity of the Hamiltonian near the boundary of $\mathcal P_{\epsilon}(G)$, which helps to establish the viscosity inequalities on $\partial \mathcal P_{\epsilon}(G).$ 

\begin{assumption}\label{ass_H2} Fix $\epsilon\in(0,\frac1d).$
There exsits a non-negative function $\mathfrak{b}\in\mathcal C^1(\mathcal P_{\epsilon}(G))\cap \mathcal C(\partial\mathcal P_{\epsilon}(G))$ such that $\mathfrak b(\xi)=0$ if and only if $\xi\in\partial \mathcal P_{\epsilon}(G).$   For any $\xi_0\in\partial\mathcal P_{\epsilon}(G),k>0,$ 
positive function $\mu:\mathcal P^{\circ}_{\epsilon}(G)\to\mathbb R^+,$ 
and $\eta\in\mathcal P_{\epsilon}(G),$ it holds that 
\begin{align*}
&\liminf_{\xi\to\xi_0}\Big(\mathcal H(\xi,k\nabla_G(\xi-\eta)-\mu(\xi)\nabla_{\mathcal W}\mathfrak b(\xi))-\mathcal O_{\xi}(k\nabla_G(\xi-\eta)-\mu(\xi)\nabla_{\mathcal W}\mathfrak b(\xi))\Big)\\
&\ge \mathcal H(\xi_0,k\nabla_G(\xi_0-\eta))-\mathcal O_{\xi_0}(k\nabla_G(\xi_0-\eta)).
\end{align*}
\end{assumption}

\begin{example}\label{ex_boundH}
Consider the Hamiltonian function in Example \ref{exam1} $\mathcal H(\xi,p)=\mathcal I(\xi)^{-2}\|p\|_{\xi}^2$ with $d=3$. 
Define $\mathfrak b(\xi):=(\xi_1-\epsilon)(\xi_2-\epsilon)(\xi_3-\epsilon)$. One can calculate that 
\begin{align*}
& 
-\nabla_{\mathcal W}\mathfrak b(\xi)
=
\left(\begin{array}{ccc}
0& \sqrt{\omega_{1,2}}(\xi_3-\epsilon) e_{1,2}^{\top}(\xi-\epsilon)&\sqrt{\omega_{1,3}}(\xi_2-\epsilon) e_{1,3}^{\top}(\xi-\epsilon)\\
-\sqrt{\omega_{1,2}}(\xi_3-\epsilon) e_{1,2}^{\top}(\xi-\epsilon)&0&\sqrt{\omega_{2,3}}(\xi_1-\epsilon) e_{2,3}^{\top}(\xi-\epsilon)\\
-\sqrt{\omega_{1,3}}(\xi_2-\epsilon) e_{1,3}^{\top}(\xi-\epsilon)& -\sqrt{\omega_{2,3}}(\xi_1-\epsilon) e_{2,3}^{\top}(\xi-\epsilon)&0
\end{array}
\right).
\end{align*} Assume that $\omega_{1,2}=\omega_{1,3}=\omega_{2,3}$, and the metric tensor $g(t,\epsilon)$ is non-decreasing for $t\in[\epsilon,1-2\epsilon)$, which can be satisfied by $g_1,g_2,g_3$ given in Example \ref{ex_g}. We refer to Appendix \ref{app_vari} for the verification that Assumption \ref{ass_H2} is satisfied. 
\end{example}

\begin{theorem}\label{thm1}
Let Assumptions \ref{assumption_g}--\ref{ass_G} hold, $h\in(0,\frac13)$, $\mathcal F\in\mathcal C(\mathcal P(G)),$ and  
$\mathcal U_0$ be $l^2$-Lipschitz continuous with the Lipschitz constant $\mathrm{Lip}(\mathcal U_0).$

(\romannumeral1) There exist a small constant $a\in(0,1)$ and a constant $R>1+\max\{\sqrt{2}\mathrm{Lip}(\mathcal U_0),C(d)(L_2(a)+a^{-1})\}$ such that \eqref{cond_CLF} holds.  Moreover,  
$$\lim_{h\to0}\sup_{n\in\{0,\ldots,N_T-1\},\vec i\in\mathcal N^{\circ},\xi^{s_{\vec i}}\in\mathcal P^h(G)}|U^n_{\vec i}-u(t_n,\xi^{s_{\vec i}})|=0.$$ 

(\romannumeral2) Let Assumption \ref{ass_H2} hold with some fixed $\epsilon\in(0,\frac1d).$ We in addition assume that modulus $\mathfrak m_{\epsilon}$ is of linear growth (i.e., $ \mathfrak m_{\epsilon}(r)\leq C({\epsilon})r,\,r\ge 0$), and that potential $\mathcal F$ is $l^2$-Lipschitz continuous on $\mathcal P_{\epsilon}(G)$. Then when \eqref{cond_CLF} is satisfied with $R>1+\max\{\sqrt{2}\mathrm{Lip}(\mathcal U_0),C(d)L_2(\epsilon)\}$, there exists
  some constant $C:=C(\epsilon,T)>0$, \begin{align*}
\sup_{n\in\{0,\ldots,N_{T} \},\vec i\in\mathcal N_{\epsilon},\xi^{s_{\vec i}}\in\mathcal P_{\epsilon}^h(G)}|U^n_{\vec i}-u(t_n,\xi^{s_{\vec i}})|\leq C(\tau^{\frac12}+h^{\frac12}).
\end{align*}
\end{theorem}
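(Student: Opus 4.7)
The plan is to treat the two parts via different strategies: part (i) through the Barles--Souganidis framework for monotone, stable, consistent schemes, and part (ii) through a quantitative doubling-of-variables argument paired with the barrier function from Assumption \ref{ass_H2} to force maximizers into the interior of $\mathcal P_{\epsilon}(G)$.

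For part (i), I would combine Proposition \ref{mono1} (with Remark \ref{remark1}) and Lemma \ref{lem1} to obtain uniform boundedness and discrete Lipschitz estimates for $\widetilde U^n$ along the CFL regime \eqref{cond_CLF}. Define the upper and lower half-relaxed limits
\begin{align*}
\bar u(t,\xi) &:= \limsup_{(h,\tau)\to 0,\,(t_n,\xi^{s_{\vec i}})\to(t,\xi),\,\vec i\in\mathcal N^{\circ}} U^n_{\vec i},\\
\underline u(t,\xi) &:= \liminf_{(h,\tau)\to 0,\,(t_n,\xi^{s_{\vec i}})\to(t,\xi),\,\vec i\in\mathcal N^{\circ}} U^n_{\vec i},
\end{align*}
on $[0,T]\times\mathcal P^{\circ}(G)$. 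Using the consistency and local Lipschitz property in Assumption \ref{ass_G}, together with Lemma \ref{lemma1} to translate partial derivatives of a smooth test function on $\widetilde{\mathcal P}(G)$ into Wasserstein derivatives on $\mathcal P^{\circ}(G)$, I would show that $\bar u$ is a viscosity subsolution and $\underline u$ a supersolution of \eqref{HJeq} on $(0,T)\times \mathcal P^{\circ}(G)$. The standard test-function argument at a local extremum of $\bar u-\varphi$ produces nearby grid maxima of $\widetilde U^n-\widetilde\varphi$, at which \eqref{explicit} combined with monotonicity yields the required viscosity inequality in the limit; the extrapolation rules \eqref{boundary1}--\eqref{boundary} do not intervene since the test points are interior. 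The comparison principle underlying Proposition \ref{prop_exact} then forces $\bar u\le u\le \underline u$, so $\bar u=u=\underline u$ and uniform convergence on compact interior subsets follows, proving (i).

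For part (ii), I would use doubling of variables to produce a rate. Introduce, for parameters $\alpha,\beta,\mu>0$,
\begin{align*}
\Phi(t,s,\xi,\eta):=u(s,\eta)-\bar U(t,\xi)-\frac{\|\xi-\eta\|_{l^2}^{2}}{2\alpha}-\frac{|t-s|^{2}}{2\beta}-\mu\bigl(\mathfrak b(\xi)^{-1}+\mathfrak b(\eta)^{-1}\bigr),
\end{align*}
where $\bar U$ is a piecewise-constant extension of $U^n_{\vec i}$ to $[0,T]\times \mathcal P^h_{\epsilon}(G)$. The $\mu$-barrier forces any maximizer $(t^{\ast},s^{\ast},\xi^{\ast},\eta^{\ast})$ strictly into the interior of $\mathcal P_{\epsilon}(G)^{2}$. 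Evaluating the subsolution inequality for $u$ at $(s^{\ast},\eta^{\ast})$ and pairing it with the scheme \eqref{explicit1} at a grid point near $\xi^{\ast}$, I would employ Lemma \ref{lemma1} to identify the gradient of the Euclidean penalization with the difference matrices $[D^{\pm}U^{n}_{\vec i}]$, the local Lipschitz property in Assumption \ref{ass_G}(iii) to control the resulting discrepancy, and the linear-growth modulus $\mathfrak m_{\epsilon}$ together with the Lipschitz estimates of $u$ from Proposition \ref{prop_exact}(ii) and of $\mathcal F$ to close the estimate. Assumption \ref{ass_H2} is what permits sending $\mu\downarrow 0$: its liminf bound ensures that the additional term $-\mu\nabla_{\mathcal W}\mathfrak b^{-1}$ entering the gradient argument of $\mathcal H-\mathcal O_{\xi}$ drops out in the limit. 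The resulting inequality takes the shape
\begin{align*}
\max_{\vec i\in\mathcal N_{\epsilon},\,n}\bigl(u(t_{n},\xi^{s_{\vec i}})-U^{n}_{\vec i}\bigr)\le C_{\epsilon}\Bigl(\alpha+\beta+\frac{h}{\alpha}+\frac{\tau}{\beta}\Bigr),
\end{align*}
whose optimization at $\alpha=\sqrt h$ and $\beta=\sqrt\tau$ yields the one-half rate. The reverse bound is obtained symmetrically from the supersolution property, and the implicit scheme \eqref{implicit1} is treated in parallel by applying the discrete inequality at time $n+1$; its unconditional stability removes the CFL requirement.

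The principal obstacle is the graph individual noise: $\mathcal O_{\xi}$ is singular as $\xi\to\partial\mathcal P(G)$ and, more delicately, $\|p\|_{\xi}^{2}$ can degenerate in non-trivial directions (the ``blind-spot'' phenomenon of Section \ref{sec_3}), so the Euclidean coercivity-based boundary argument of \cite{Crandall_boun,Soug_boun,boundary1} fails; Assumption \ref{ass_H2} is essential to recover monotonicity of $\mathcal H-\mathcal O_{\xi}$ in the inward-normal direction to $\partial\mathcal P_{\epsilon}(G)$, and the barrier $\mathfrak b$ must be built into $\Phi$ so the passage $\mu\downarrow 0$ is legitimate. A secondary difficulty is that the Wasserstein gradient of $u$ is not known to be pointwise smooth \cite{MCC}, so the consistency of $[D^{\pm}U^{n}_{\vec i}]$ with $\nabla_{\mathcal W}u$ must be routed through Euclidean coordinates via Lemma \ref{lemma1}, where Proposition \ref{prop_exact}(ii) provides the missing Lipschitz control. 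Finally, the extrapolation-based boundary definitions \eqref{boundary1}--\eqref{boundary} introduce $O(h)$ perturbations near $\partial\mathcal P_{\epsilon}(G)$ that must be absorbed without spoiling the $\sqrt h+\sqrt\tau$ rate; this is made possible by the uniform bound \eqref{whole_bound2} and the localization supplied by the barrier $\mathfrak b$.
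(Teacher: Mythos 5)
Your proposal splits the two parts across two methodologies, whereas the paper runs a doubling-of-variables argument for both. For part (ii), your sketch is in essence the paper's proof modulo notational choices (separate $\alpha,\beta$ versus a single $\delta^2$; the barrier $\mathfrak b^{-1}$ placed in both variables from the outset rather than introduced post hoc only in the continuous variable when the maximizer of the unpenalized auxiliary function lands on $\partial\mathcal P_{\epsilon}(G)$), and you correctly identify the role of Assumption~\ref{ass_H2}, the Lipschitz-improved bound $\|\hat\xi-\hat\eta\|_{l^2}\lesssim\delta^2$, and the extrapolation error near $\partial\mathcal P_{\epsilon}(G)$. One point worth flagging: the paper's auxiliary function carries the extra term $-\frac{\sigma}{4T}(t+s)$, which is what lets $\sigma$ itself appear linearly in the combined viscosity inequality (e.g.\ \eqref{rela111}); your sketch omits it, and without it the final estimate would have to be reassembled from the penalized value directly rather than read off.

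For part (i), however, there is a genuine gap. The Barles--Souganidis half-relaxed-limit argument yields $\bar u=\underline u=u$ pointwise on $(0,T)\times\mathcal P^{\circ}(G)$ and hence \emph{locally} uniform convergence on compact subsets of $\mathcal P^{\circ}(G)$ — indeed you say so yourself ("uniform convergence on compact interior subsets follows"). But the statement of Theorem~\ref{thm1}(i) is a supremum over \emph{all} interior grid points $\vec i\in\mathcal N^{\circ}$, and as $h\to 0$ these grid points approach $\partial\mathcal P(G)$, where $u$ is not defined and the interior Lipschitz constant $L_2(\epsilon)$ degenerates. If a sequence $\xi_k^{s_{\vec i_k}}\to\xi_0\in\partial\mathcal P(G)$, the half-relaxed limit contradiction argument does not apply, since $\bar u,\underline u$ are only defined on $\mathcal P^{\circ}(G)$. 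The paper closes precisely this hole with the penalty $a(\mathcal I(\xi)+\mathcal I(\eta))$ in $\Phi_{\varrho,a,\delta}$: Claim~1 shows the maximizer is trapped in $\mathcal P_{ac_1}(G)$ with $c_1$ independent of $h,\tau$, which is what makes the bound on $\sigma=\sup_{\mathbb Q^\tau_{h,0}}(u-U)$ legitimate over the whole interior grid, and Claim~2 (the chain of limits \eqref{smalldelta}--\eqref{limit1}) controls the extra error $\omega_0(a,\delta)\sim(a\mathcal I(\bar\xi))^\kappa$ created by that penalty. A Barles--Souganidis comparison also requires strong comparison for discontinuous semi-solutions, which is stronger than the continuous well-posedness furnished by Proposition~\ref{prop_exact} and would need to be separately justified. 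To repair (i) you would either have to supplement the half-relaxed-limit argument with an explicit boundary-layer estimate, or — as the paper does — run the same quantitative doubling with the $\mathcal I$-penalty that you already set up for (ii), which is in fact the more economical route here.
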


The following theorem states that when $\mathcal G$ is the numerical Hamiltonian function of \eqref{HJeq}, the implicit method is  well-defined
and convergent. 
Note that there is no CFL-type condition like \eqref{cond_CLF} for the implicit method \eqref{implicit1}, compared with the monotone method \eqref{explicit1}. The analysis is similar to that of the explicit method and is thus put in Appendix \ref{app_vari}. 
\begin{theorem}\label{thm_implicit}
Let Assumptions \ref{assumption_g}--\ref{ass_G} hold,  $h\in(0,\frac13)$, $\mathcal F\in\mathcal C(\mathcal P(G)),$ and $\mathcal U_0$ be $l^2$-Lipschitz continuous.  Then there exists a unique  numerical solution of the implicit method \eqref{implicit1} satisfying    
$\sup\limits_{n\in\{0,\ldots,N_T-1\},\vec i\in\mathcal N^{\circ},\xi^{s_{\vec i}}\in\mathcal P^h(G)}|U^n_{\vec i}|<\infty$, and 
$$\lim_{\tau,h\to0}\sup_{n\in\{0,\ldots,N_T-1\},\vec i\in\mathcal N^{\circ},\xi\in\mathcal P^h_0(G)}|U^n_{\vec i}-u(t_n,\xi)|=0.$$ 
 Moreover, for each fixed $\epsilon\in(0,\frac1d),$ if we in addition let Assumptiom \ref{ass_H2} hold, modulus $\mathfrak m_{\epsilon}$ be of linear growth, and  potential $\mathcal F$ be $l^2$-Lipschitz continuous on $\mathcal P_{\epsilon}(G)$, then there exists some constant $C:=C(\epsilon,T)>0,$ \begin{align*}
\sup_{n\in\{0,\ldots,N_{T}\},\vec i\in\mathcal N_{\epsilon},\xi^{s_{\vec i}}\in\mathcal P_{\epsilon}^h(G)}|U^n_{\vec i}-u(t_n,\xi^{s_{\vec i}})|\leq C(\tau^{\frac12}+h^{\frac12}). 
\end{align*} 
\end{theorem}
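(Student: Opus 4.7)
The plan is to establish the three claims—well-posedness of the implicit system, uniform convergence on the interior, and the order-$\tfrac12$ error estimate on $\mathcal{P}_\epsilon(G)$—by mirroring the strategy developed for the monotone scheme \eqref{explicit1}, while exploiting the fact that implicit time stepping removes the CFL-type restriction \eqref{cond_CLF}.

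\emph{Well-posedness and uniform bound.} For fixed $n$ I would view the implicit equation as the zero set of $\Phi_n(W)_{\vec{i}} := W_{\vec{i}} + \tau \mathcal{G}(\xi^{s_{\vec{i}}}, [D^+ W_{\vec{i}}], [D^- W_{\vec{i}}]) + \tau \mathcal{F}(\xi^{s_{\vec{i}}}) - U^n_{\vec{i}}$. Combining Assumption \ref{ass_G}(i) with the explicit dependence of $D^\pm W_{\vec{i}}$ on its neighbours gives $\Phi_n$ an M-matrix structure: each component is strictly increasing in $W_{\vec{i}}$ and non-increasing in $W_{\vec{j}}$ for $\vec{j} \neq \vec{i}$. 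Evaluated at the maximum of $W_1 - W_2$ for two candidate solutions, this forces $W_1 = W_2$, yielding uniqueness. Existence follows from a topological argument (Brouwer's theorem in a large $\ell^\infty$-ball whose radius is controlled by $\|U^n\|_\infty$, $\tau\|\mathcal{F}\|_{L^\infty(\mathcal{P}(G))}$, and the local Lipschitz bound in Assumption \ref{ass_G}(iii)). The iterative bound $\sup_{\vec i}|U^n_{\vec{i}}| \leq C(t_n + 1)$ is then obtained exactly as in Proposition \ref{mono1}, using $\mathcal{H}(\xi,0)=0$ and $\mathcal{O}_\xi(0)=0$.

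\emph{Convergence and error estimate.} The M-matrix property also shows that \eqref{implicit1} is order-preserving in time: $U^n \leq V^n$ implies $U^{n+1} \leq V^{n+1}$. Combined with consistency (Assumption \ref{ass_G}(ii)) and the stability of Remark \ref{remark1}, this fits the Barles--Souganidis framework, so the half-relaxed limit / doubling-variables argument developed for the explicit scheme can be transferred to $\widetilde{\mathcal{P}}^h(G)$ coordinates via Lemma \ref{lemma1}, the only change being that the discrete time derivative $(\widetilde{U}^{n+1}-\widetilde{U}^n)/\tau$ is balanced by $\widetilde{\mathcal{G}}$ evaluated at the upper slice $[D^\pm \widetilde{U}^{n+1}]$. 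The comparison principle for \eqref{HJeq} then forces the two half-relaxed limits to coincide with $u$, giving uniform convergence on $\mathcal{P}^h_0(G)$. For the rate on $\mathcal{P}^h_\epsilon(G)$, I would use the penalised doubling-variables functional
\begin{align*}
\Phi(t,\xi,s,\eta)=u(t,\xi)-U(s,\eta)-\tfrac{1}{2\delta}\|\xi-\eta\|_{l^2}^2-\tfrac{1}{2\delta}(t-s)^2-\lambda(T-t)-\mu\bigl(\mathfrak{b}(\xi)^{-1}+\mathfrak{b}(\eta)^{-1}\bigr),
\end{align*}
where $\mathfrak{b}$ is the barrier of Assumption \ref{ass_H2}. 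The singular terms $\mu/\mathfrak{b}$ prevent the maximizer from touching $\partial \mathcal{P}_\epsilon(G)$, and Assumption \ref{ass_H2} guarantees that the perturbation $-\mu\nabla_{\mathcal{W}}\mathfrak{b}$ preserves the required monotonicity of $\mathcal{H}-\mathcal{O}_\xi$ at the boundary. Combining the viscosity inequality for $u$ with the scheme inequality for $U$ at the max point, using Assumption \ref{ass_G}(iii) and consistency to replace $\mathcal{G}$ by $\mathcal{H}-\mathcal{O}$ up to $O(h)$ errors, and using the linear-growth modulus $\mathfrak{m}_\epsilon$ together with the Lipschitz bound $L_2(\epsilon)$ from Proposition \ref{prop_exact} to control the cross terms, I would optimise $\delta,\mu,\lambda$ against $(\tau,h)$ in the standard way to reach $\tau^{1/2}+h^{1/2}$.

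I expect the main obstacle to be the interaction between the implicit time step and the doubling-variables argument: at the max point of $\Phi$ the scheme inequality involves $[D^\pm U^{n+1}]$, whereas the test-function time derivative naturally pairs with $U^n$. I plan to resolve this by shifting the viscosity/scheme comparison to the upper slice $t_{n+1}$ and absorbing the mismatch via the contractivity built into the M-matrix structure, while simultaneously keeping the max-point spatial differences within the regime where the local Lipschitz constant $C_{R,\epsilon}$ of Assumption \ref{ass_G}(iii) and the boundary monotonicity of Assumption \ref{ass_H2} remain effective.
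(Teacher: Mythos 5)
Your overall strategy is correct and closely mirrors the paper's. The paper's own proof of this theorem is deliberately terse: it defers existence, uniqueness, and the $C(t_n+1)$ bound to Lemma \ref{lem2}, Lemma \ref{lem3}, and Proposition \ref{prop_EU} in the appendix, and then says only that the convergence proof follows that of Theorem \ref{thm1} with the arguments of $\mathcal G$ in the scheme inequality \eqref{U_eq1} taken at the $n_0$th rather than the $(n_0-1)$th time layer. You identified exactly this distinction, and you correctly observe that the implicit scheme preserves the comparison structure, so the doubling-of-variables machinery transfers. Your description of the well-posedness step via the monotonicity of $\mathcal G$ in its forward/backward difference arguments is the right mechanism, as is the iteration giving $\|U^n\|_\infty\le C(t_n+1)$.

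Two points deserve a closer look. First, invoking Brouwer's theorem on a ball whose radius is ``controlled by \ldots\ the local Lipschitz bound in Assumption \ref{ass_G}(iii)'' would reintroduce a $\tau/h$ dependence in the radius, since the local Lipschitz bound scales like $C_R\|[D^\pm W]\|_{\infty}\sim C_R r/h$. That is precisely the CFL restriction the implicit scheme is meant to avoid. The paper instead uses the monotonicity of $\mathcal G$ (your \emph{M-matrix} observation, applied at a maximizer of $\widetilde U$) to get the a priori bound $\|\widetilde U\|_\infty\leq\|\tau\widetilde{\mathcal G}(\cdot,0,0)+\tau\widetilde{\mathcal F}-\widetilde V\|_\infty$ uniformly in $\tau/h$, and pairs that bound with a Leray--Schauder-type fixed-point theorem (a priori bound plus continuity plus ``no eigenvalue $>1$ on the boundary'') rather than requiring self-mapping. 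Your M-matrix argument does supply exactly this a priori bound, so you should use it for the ball radius; but you should replace Brouwer (which needs self-mapping and would be CFL-restricted) by the Leray--Schauder / degree-theoretic version.

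Second, your penalized functional places $\mu/\mathfrak b(\eta)$ on the \emph{discrete} variable $\eta$ as well. Since $\eta$ ranges over the grid $\mathcal P^h_\epsilon(G)$, which contains points of $\partial\mathcal P_\epsilon(G)$ where $\mathfrak b\equiv 0$, this term is $+\infty$ on a nonempty subset of the admissible $\eta$. This does not make the max ill-defined, but it categorically excludes boundary grid points from the maximizer for all $\mu>0$, whereas the error $\sigma$ is taken over \emph{all} of $\mathcal P^h_\epsilon(G)$ including its boundary. The paper's route is cleaner: the barrier $a/\mathfrak b(\xi)$ is introduced only for the \emph{continuous} variable $\xi$ and only when the unpenalized maximizer $\hat\xi$ falls on $\partial\mathcal P_\epsilon(G)$, in order to push the test point into $\mathcal P^\circ_\epsilon(G)$ where the viscosity inequality holds; Assumption \ref{ass_H2} is then exactly what lets you pass $a\to0$. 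The discrete variable $\hat\eta\in\partial\mathcal P_\epsilon(G)$ is instead handled directly through the extrapolation boundary values \eqref{boundary1}/\eqref{boundary}, at the cost of an explicit extra term. Keeping the barrier only on $\xi$ avoids having to track the $\mu$-dependence through the rate optimization and avoids the singular-on-the-grid issue. Finally, a small remark on your last paragraph: there is no genuine ``mismatch to absorb'' in time; the max point $(\hat s,\hat\eta)$ is used as in the explicit case, and the only change is that the inequality \eqref{leadsto} is applied at time level $n_1$ (rather than $n_1-1$) when bounding the spatial differences entering $\mathcal G$.
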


\section{Proofs of Main Results}
\label{sec_5}

This subsection is devoted to the proof of Theorem \ref{thm1}. We first give the proof of the convergence for the monotone method \eqref{explicit1}.
 The proof is based on a doubling of variables argument (see e.g. \cite{CrandallLions}), with carefully chosen penalty terms for the auxiliary function to prevent the maximizer from being attained at the boundary.

\begin{proof}[Proof of Theorem \ref{thm1} (\romannumeral1)] 
For simplicity, we only give the proof for $d=3$ since the proof can be also generalized to the other case $d>3$. Denote $\mathbb Q^{\tau}_{h,0}:=\{(t_n,(j,k,l)h)\in[0,T)\times {\mathcal P}^h(G): (j, k,l)\in\mathcal N^{\circ};n=0,\ldots,N_T-1\}$.
Without loss of generality, we assume that 
$\sigma:=\sup_{(t_n,\xi)\in\mathbb Q^{\tau}_{h,0}}(u(t_n,\xi)-U(t_n,\xi))>0.$ The case that $\inf _{(t_n,\xi)\in\mathbb Q^{\tau}_{h,0}}(u(t_n,\xi)-U(t_n,\xi))<0$ can be proved in a similar way.  Denote $\mathcal J(\xi,\eta):=\frac12\|\xi-\eta\|^2_{l^2}.$

\textbf{Penalization procedure.} For $\lambda\in(\frac12,1)$ and $\varrho,a,\delta\in(0,1),$ define the auxiliary function 
\begin{align}\label{def_Phi}
\Phi_{\varrho,a,\delta}(t,\xi,s,\eta)&:=\lambda u(t,\xi)-U(s,\eta)-\frac{\sigma}{4T}(t+s)-\frac{\varrho}{T-t}-\frac{\varrho}{T-s}\notag\\
&\quad -\frac{\mathcal J(\xi,\eta)}{\delta^2}-\frac{(t-s)^2}{2\delta^2}-a(\mathcal I(\xi)+\mathcal I(\eta)),
\end{align}
for $t\in[0,T),s\in\{t_0,\ldots,t_{N_T-1}\}$ and $\xi\in\mathcal P^{\circ}(G),\eta\in\mathcal P^h(G)\backslash \partial \mathcal P(G).$ 
Note that $\Phi_{\varrho,a,\delta}$ tends to $-\infty$ when $t,s\to T^-,$ or $\xi,\eta\to\partial\mathcal P(G),$
and it has upper bound on the set $[0,T)\times\mathcal P^{\circ}(G)\times \mathbb Q^{\tau}_{h,0}.$ Hence for every $\varrho,a,$ and $\delta,$ there exists a maximizer on $[0,T)\times\mathcal P^{\circ}(G)\times\mathbb Q^{\tau}_{h,0}$ for $\Phi_{\varrho,a,\delta}.$ 
We denote a  maximizer of $\Phi_{\varrho,a,\delta}$ by $(\bar t,\bar\xi,\bar s,\bar \eta)$ without displaying the dependence on $\varrho,a,\delta,h,\tau.$ Recalling \eqref{whole_bound}, we let $M>0$ satisfy that $\sup_{[0,T)\times \mathcal P^{\circ}(G)}|u(t,\xi)|\vee \sup_{\{n\in\{0,\ldots,N_T\},\vec i\in\mathcal N,\xi^{s_{\vec i}}\in\mathcal P^h(G)\}}|U^n_{\vec i}|\leq M.$ 
Denote  
\begin{align*}
&M_{\varrho,a,\delta}:=\sup_{(t,\xi)\in[0,T)\times \mathcal P^{\circ}(G),(s,\eta)\in\mathbb Q^{\tau}_{h,0}}\Phi_{\varrho,a,\delta}(t,\xi,s,\eta),\quad M_{\varrho,a}:=\sup_{(s,\eta)\in\mathbb Q^{\tau}_{h,0}}\Phi_{\varrho,a,\delta}(s,\eta,s,\eta),\\
&M_{\varrho}:=\sup_{(s,\eta)\in\mathbb Q^{\tau}_{h,0}}\Big(\lambda u(s,\eta)-U(s,\eta)-\frac{\sigma s}{2T}-\frac{2\varrho}{T-s}\Big),\quad \Phi_0(s,\eta)=\lambda u(s,\eta)-U(s,\eta)-\frac{\sigma s}{2T}.
\end{align*}    
We give two claims:

\textit{Claim $1$:}  There exist small constants $c_1,c_2$ independent of $h,\tau$ such that $\bar\xi,\bar\eta\in\mathcal P^{\circ}_{ac_1}(G)$ and $\bar t,\bar s< T-\varrho c_2.$

\textit{Claim $2$:} It holds that 
  \begin{align}&\lim_{\delta\to0}\frac{(\bar t-\bar s)^2+\|\bar\xi-\bar\eta\|^2_{l^2}}{\delta^2}=0\quad \forall \,\varrho,a\in(0,1),\label{smalldelta}\\
&\lim_{a\to0}\limsup_{\delta\to0}a(\mathcal I(\bar \xi)+\mathcal I(\bar\eta))=0\quad \forall \,\varrho\in(0,1),\label{small_I}\\
&\lim_{\delta\to0}M_{\varrho,a,\delta}=M_{\varrho,a},\quad  \lim\limits_{a\to0}M_{\varrho,a}=M_{\varrho},\quad \lim_{\varrho\to0}M_{\varrho}=\sup_{(s,\eta)\in\mathbb Q^{\tau}_{h,0}}\Phi_0(s,\eta).\label{limit1}
 \end{align}

\textit{Proof of Claim $1$}: On the one hand, we have 
\begin{align}\label{rela_<}
\Phi_{\varrho, a,\delta}(\bar t,\bar\xi,\bar s,\bar\eta)&\leq  2M-\frac{\varrho}{T-\bar t}-\frac{\varrho}{T-\bar s}-a\big(\max_{l=1,\ldots,d}\frac{1}{\bar\xi_l}+\max_{l=1,\ldots,d}\frac{1}{\bar\eta_l}\big)\notag\\
&\leq 2M-\max\Big\{\frac{\varrho}{T-\bar t},\frac{\varrho}{T-\bar s},a\max_{l=1,\ldots,d}\frac{1}{\bar\xi_l},a\max_{l=1,\ldots,d}\frac{1}{\bar\eta_l}\Big\},
\end{align}
where $d=3.$ Let $\lfloor \cdot\rfloor $ and $\lceil \cdot\rceil$ represent the floor and ceiling functions respectively. 
On the other hand, 
taking $t=s=\lfloor \frac{N_T}{2}\rfloor \tau\leq \frac T2,$ and $\xi=\eta$ with $\xi_1=\xi_2=\eta_1=\eta_2=\lceil\frac{N_h}{16}\rceil h,\xi_3=\eta_3=1-2h\lceil\frac{N_h}{16}\rceil,$ 
it follows from $\varrho,a<1$ that 
\begin{align}\label{rela_>}
\Phi_{\varrho,a,\delta}(\bar t,\bar\xi,\bar s,\bar\eta)> -3M-\frac 4T-2\times(32+\frac{1}{1-2h(\frac{N_h}{16}+1)})\ge -3M-\frac 4T-74
\end{align} 
when the spatial meshsize $h<\frac13.$  Inequalities  \eqref{rela_<} and \eqref{rela_>} lead to that $\min\{T-\bar t,T-\bar s\}> \frac{\varrho}{5M+\frac 4T+74}=:\varrho c_2$ and 
$\min\{\min\limits_{l=1,\ldots,d}\bar\xi_l,\min\limits_{l=1,\ldots,d}\bar\eta_l\}> \frac{a}{5M+\frac 4T+74}=:ac_1$ hold  simultaneously. Hence \textit{Claim $1$} is proved.

\textit{Proof of Claim $2$:} Observing that 
\begin{align*}
&\quad M_{\varrho,a,\delta}+\frac{(\bar t-\bar s)^2+2\mathcal J(\bar\xi,\bar\eta)}{4\delta^2}=\Phi_{\varrho,a,\delta}(\bar t,\bar\xi,\bar s,\bar\eta)+\frac{(\bar t-\bar s)^2+2\mathcal J(\bar\xi,\bar\eta)}{4\delta^2}\\
&=\Phi_{\varrho,a,\sqrt{2}\delta}(\bar t,\bar\xi,\bar s,\bar\eta)\leq M_{\varrho,a,\sqrt 2\delta},
\end{align*}
and that $\lim_{\delta\to 0^+}M_{\varrho,a,\delta}$ exists with a finite value due to the monotone convergence theorem, we obtain \eqref{smalldelta}. 
Moreover, using $\Phi_{\varrho,a,\delta}(\bar t,\bar\xi,\bar s,\bar\eta)=M_{\varrho,a,\delta}\ge \Phi_{\varrho,a,\delta}(s,\eta,s,\eta),$ and taking supremum with $(s,\eta)\in\mathbb Q^{\tau}_{h,0}$, we get $\Phi_{\varrho,a,\delta}(\bar t,\bar\xi,\bar s,\bar\eta)\ge M_{\varrho,a}.$ By \eqref{def_Phi}, it holds   
$\Phi_{\varrho,a,\delta}(\bar t,\bar \xi,\bar s,\bar\eta)\leq \lambda u(\bar t,\bar\xi)-U(\bar s,\bar\eta)-\frac{\sigma}{4T}(\bar t+\bar s)-\frac{\varrho}{T-\bar t}-\frac{\varrho}{T-\bar s}-a(\mathcal I(\bar\xi)+\mathcal I(\bar\eta)).$
 In virtue of  \eqref{smalldelta}, letting $\delta\to0$ gives $\lim_{\delta\to0}M_{\varrho,a,\delta}=M_{\varrho,a}.$

To prove \eqref{small_I}, we first note that \begin{align}\label{inequality11}
M_{\varrho,a,\delta}+\frac a2(\mathcal I(\bar\xi)+\mathcal I(\bar\eta))+\frac{(\bar t-\bar s)^2+2\mathcal J(\bar\xi,\bar\eta)}{4\delta^2}\leq M_{\varrho,\frac a2,\sqrt{2}\delta}.
\end{align}
Taking supremum limit with $\delta\to0$, and using \eqref{smalldelta} and the first equality in \eqref{limit1}, we have 
\begin{align*}
M_{\varrho,a}+\limsup_{\delta\to0}\frac{a}{2}(\mathcal I(\bar\xi)+\mathcal I(\bar\eta))\leq M_{\varrho,\frac{a}{2}}.
\end{align*}
Thus \eqref{small_I} holds if $\lim_{a\to0}M_{\varrho,a}$ exists and is finite. Indeed, we can prove that $\lim_{a\to0}M_{\varrho,a}=M_{\varrho}$, i.e., the second equality in \eqref{limit1} holds. On the one hand,   since $\Phi_{\varrho,a,\delta}(s,\eta,s,\eta)\leq M_{\varrho}$, taking supremum with $(s,\eta)\in\mathbb Q^{\tau}_{h,0}$ and taking  $a\to0$ yield 
$\lim\limits_{a\to0}M_{\varrho,a}\leq M_{\varrho}$. On the other hand, notice that $$M_{\varrho,a}\ge \Phi_{\varrho,a,\delta}(s,\eta,s,\eta)=\lambda u(s,\eta)-U(s,\eta)-\frac{\sigma s}{2T}-\frac{2\varrho}{T-s}-2a\mathcal I(\eta),\quad (s,\eta)\in\mathbb Q^{\tau}_{h,0}.$$ Letting $a\to0$ leads to $\lim_{a\to0}M_{\varrho,a}\ge \lambda u(s,\eta)-U(s,\eta)-\frac{\sigma s}{2T}-\frac{2\varrho}{T-s}.$ This finishes the proof of the second equality in \eqref{limit1} 
by taking the supremum with $(s,\eta)\in \mathbb Q^{\tau}_{h,0}$. 
The third equality in \eqref{limit1} can be proved in a similar way and thus is omitted. 
Hence \textit{Claims} $1$--$2$ are proved.

Next, we split the proof of the convergence into three cases: $\bar t>0,\bar s>0;$ $ \bar t\ge 0,\bar s=0;$ and $\bar t=0,\bar s>0.$ 

\textbf{Analysis for 
case  $\bar t>0,\bar s>0.$} 

\textit{Step $1$: Viscosity inequalities.} 
When $\bar s,\bar\eta$ are fixed, the mapping 
\begin{align*}
(t,\xi)\mapsto u(t,\xi)-\frac{\sigma t}{\lambda 4T}-\frac{\varrho}{\lambda(T-t)}-\frac{\mathcal J(\xi,\bar\eta)}{\lambda \delta^2}-\frac{(t-\bar s)^2}{2\lambda\delta^2}-\frac{a}{\lambda}\mathcal I(\xi)
\end{align*}
takes the maximum value at point $(\bar t,\bar \xi).$ By the definition of the viscosity subsolution (see Definition \ref{def_viscosity}), we infer   
\begin{align}\label{ineq_lambda}
\frac{\sigma}{ 4T}+\frac{\varrho}{(T-t)^2}+\frac{t-\bar s}{\delta^2}+\lambda\mathcal H(\bar\xi, \frac{\bar p}{\lambda})+\lambda\mathcal F(\bar\xi)\leq \lambda\mathcal O_{\bar\xi}\big(\frac{\bar p}{\lambda}\big),
\end{align}
where $\bar p:=\frac{\nabla_{\mathcal W}\mathcal J(\cdot,\bar\eta)(\bar\xi)}{\delta^2}+a\nabla_{\mathcal W}\mathcal I(\bar\xi)=:\bar p_1+\bar p_2$.
Utilizing the convexity of $\mathcal H(\bar\xi,\cdot)$ (see Assumption \ref{ass_H} (H-\romannumeral1)),  from $\lambda_1 \bar p_1=\lambda_1 (\bar p-\bar p_2)=\lambda_1 \bar p+(1-\lambda_1)\frac{\lambda_1}{\lambda_1-1}\bar p_2$ with $\lambda_1=\frac{1+\lambda}{2}\in(\lambda,1)\subset (0,1),$ we have 
$
\lambda \mathcal H(\bar\xi,\frac{\bar p}{\lambda})\ge \frac{\lambda}{\lambda_1}\mathcal H(\bar\xi,\frac{\lambda_1 \bar p_1}{\lambda})-\frac{\lambda(1-\lambda_1)}{\lambda_1}\mathcal H(\bar\xi,\frac{\lambda_1 \bar p_2}{(\lambda_1-1)\lambda}). 
$
It follows from \cite[Eq. (5.16)]{MCC} that $\|\nabla_{\mathcal W}\mathcal I(\bar\xi)\|_{\bar\xi}\leq C\sum_{l=1}^d\frac{1}{\bar\xi_l^2}\leq C(\mathcal I(\bar\xi))^2$. Then combining (H-\romannumeral5) we obtain that $
\lambda \mathcal H(\bar\xi,\frac{\bar p}{\lambda})\ge \frac{\lambda}{\lambda_1}\mathcal H(\bar\xi,\frac{\lambda_1 \bar p_1}{\lambda})-C
(a\mathcal I(\bar\xi))^{\kappa}
$
for $C>0$ independent of $a$ and $\delta.$ Inserting this inequality into \eqref{ineq_lambda} and using (H-\romannumeral2),  one deduces 
\begin{align}\label{rela1}
 \frac{\sigma}{4T}+\frac{\varrho}{T^2}+\frac{\bar t-\bar s}{\delta^2}+\gamma(\frac{\lambda_1}{\lambda})\mathcal H(\bar\xi,\bar p_1)+\mathcal F(\bar \xi)-\mathcal O_{\bar\xi}(\bar p)\leq (1-\lambda)\|\mathcal F\|_{\infty}+C(a\mathcal I(\bar\xi))^{\kappa}. 
 \end{align}

In addition,  since $\Phi_{\rho,a,\delta}$ takes maximum at $(\bar t,\bar\xi,\bar s,\bar\eta),$ we have that the mapping 
\begin{align*}
(s,\eta)\mapsto  U(s,\eta)+\frac{\sigma s}{4T}+\frac{\varrho}{T-s}+\frac{\mathcal J(\bar\xi,\eta)}{\delta^2}+\frac{(\bar t-s)^2}{2\delta^2}+a\mathcal I(\eta)
\end{align*}
with $(s,\eta)\in\mathbb Q^{\tau}_{h,0}$ takes the minimum value at $(\bar s,\bar\eta):=(n_0\tau,(j_0,k_0,l_0)h).$ This yields that 
\begin{align}\label{leadsto1}
 U^n_{j,k,l}&\ge  U^{n_0}_{j_0,k_0,l_0}+\frac{\sigma}{4T}(n_0-n)\tau+\frac{\varrho}{T-n_0\tau}-\frac{\varrho}{T-n\tau}+
 \frac{1}{\delta^2}\big(\mathcal J(\bar\xi,(j_0,k_0,l_0)h)-\mathcal J(\bar\xi,(j,k,l)h)\big)\notag
\\
& +\frac{1}{2\delta^2}\big((\bar t-n_0\tau)^2-(\bar t-n\tau)^2\big)+a\big(\mathcal I((j_0,k_0,l_0)h)-\mathcal I((j,k,l)h)\big),\;\; (j,k,l)\in\mathcal N^{\circ}.
\end{align}
Then by virtue of the monotonicity of the one-step mapping $\vec{\mathcal G}$ of \eqref{explicit1}, and \eqref{leadsto1} with $n=n_0-1,$ we obtain   
\begin{align}\label{U_eq1}
& U^{n_0}_{j_0,k_0,l_0}=\big(\vec{\mathcal G}( U^{n_0-1})\big)_{j_0,k_0,l_0}\notag\\
&\ge  U^{n_0}_{j_0,k_0,l_0}+\frac{\sigma\tau}{4T}+\frac{\varrho}{T-n_0\tau}-\frac{\varrho}{T-n_0\tau+\tau}+\frac{1}{2\delta^2}\big((\bar t-n_0\tau)^2-(\bar t-(n_0-1)\tau)^2\big)
-\tau{\mathcal F}(\bar\eta)\notag\\
&  -\tau\mathcal G\Big(\bar\eta,  \frac{\sqrt{\omega_{1,2}}}{h}(-\frac{1}{\delta^2}D^+_{e_{1,2}}\mathcal J(\bar\xi,\cdot)(\bar\eta)-aD^+_{e_{1,2}}\mathcal I(\bar\eta)),  \frac{\sqrt{\omega_{1,2}}}{h}(-\frac{1}{\delta^2}D^-_{e_{1,2}}\mathcal J(\bar\xi,\cdot)(\bar\eta)-aD^-_{e_{1,2}}\mathcal I(\bar\eta));\notag\\ &\qquad\;\; \frac{\sqrt{\omega_{1,3}}}{h} (-\frac{1}{\delta^2}D^+_{e_{1,3}}\mathcal J(\bar\xi,\cdot)(\bar\eta)-aD^+_{e_{1,3}}\mathcal I(\bar\eta)),  \frac{\sqrt{\omega_{1,3}}}{h}(-\frac{1}{\delta^2}D^-_{e_{1,3}}\mathcal J(\bar\xi,\cdot)(\bar\eta)-aD^-_{e_{1,3}}\mathcal I(\bar\eta));\notag\\&\qquad\;\;\frac{\sqrt{\omega_{2,3}}}{h} (-\frac{1}{\delta^2}D^+_{e_{2,3}}\mathcal J(\bar\xi,\cdot)(\bar\eta)-aD^+_{e_{2,3}}\mathcal I(\bar\eta)),  \frac{\sqrt{\omega_{2,3}}}{h}(-\frac{1}{\delta^2}D^-_{e_{2,3}}\mathcal J(\bar\xi,\cdot)(\bar\eta)-aD^-_{e_{2,3}}\mathcal I(\bar\eta))\Big), 
\end{align}  
where the definition of the difference operator $D^{\pm}_{e_{j,k}}$ is given by \eqref{D_notation}, and we use the monotonicity of the mapping $\vec{\mathcal G}$ and the inequality \eqref{leadsto1} to replace differences of $U$ in $\vec{\mathcal G}$ by those of $\mathcal J$ and $\mathcal I$.   By Lemma \ref{lemma1} with $f=\mathcal J$, we deduce that when $h$ is small,  
\begin{align}\label{nabla_relation}
 \frac{1}{h}D^+_{e_{1,2}}\mathcal J(\bar\xi,\cdot)(\bar\eta)&=\frac 1hD^+_{(1,0)}\widetilde{\mathcal J}(\bar x,\cdot)(\bar y)=\frac 1h(\widetilde{\mathcal J}(\bar x,(\bar y_1+h,\bar y_2))-\widetilde{\mathcal J}(\bar x,(\bar y_1,\bar y_2)))\notag\\
 &=\partial_{y_1}\widetilde{\mathcal J}(\bar x,\bar y)+O(h)=
 \nabla ^{e_{1,2}}\mathcal J(\bar\xi,\cdot)(\bar\eta)+O(h), 
\end{align}
where $\bar x=\Pi(\bar\xi),\bar y=(\bar y_1,\bar y_2)=\Pi(\bar\eta)$ and $O(h)$ denotes  the infinitesimal of the same order of $h$.  
Similarly, one can derive $\frac1hD_{e_{l,k}}^{\pm}\mathcal J(\bar\xi,\cdot)(\bar\eta)=\nabla^{e_{l,k}}\mathcal J(\bar\xi,\cdot)(\bar\eta)+O(h)$ for all $1\leq l<k\leq 3.$ 
 Estimates of subterms in the form of  $D^{\pm}_{e_{l,k}}\mathcal I(\bar\eta)$ can be obtained in a similar way. Inserting then these estimates into \eqref{U_eq1},  and utilizing the consistency and local Lipschitz property of  $\mathcal G,$  we derive from \eqref{U_eq1} that 
\begin{align}\label{rela2}
-\frac{\sigma}{4T}-\frac{\varrho}{(T-\bar s)^2}+\frac{\bar t-\bar s}{\delta^2}+O(\frac{\tau+h}{\delta^2})+{\mathcal F}(\bar\eta)
+\mathcal H(\bar\eta,\bar q)-\mathcal O_{\bar\eta}(\bar q)\ge 0,
\end{align}
where we set $\bar q=-\bar q_1-\bar q_2:=-\frac{1}{\delta^2}\nabla_{\mathcal W}\mathcal J(\bar\xi,\cdot)(\bar\eta)-a\nabla_{\mathcal W}\mathcal I(\bar\eta).$ Here, the local Lipschitz property of $\mathcal G$ can be used due to that there is some $R=1+C_0>0$  so that arguments of $\mathcal G(\bar\eta,\cdot)$ in \eqref{U_eq1} are  no larger than $R,$ where the constant $C_0$ will be specified later (see \eqref{R1}).   
We also remark that term $O(\frac{\tau+h}{\delta^2})$ in \eqref{rela2} may depend on parameter $a$ due to the local Lipschitz property of $\mathcal G$ and $\bar\eta\in\mathcal P_{ac_1}(G).$
By virtue of $-\bar q_1=\bar p_1,$ (H-\romannumeral1), (H-\romannumeral2), and (H-\romannumeral5),  one obtains that for $\lambda_2\in(0,1),$
\begin{align*}
\mathcal H(\bar\eta,\bar q)\leq \lambda_2\mathcal H(\bar\eta,\frac{\bar p_1}{\lambda_2})+(1-\lambda_2)\mathcal H(\bar\eta,-\frac{\bar q_2}{1-\lambda_2})\leq \lambda_2\bar{\gamma}(\frac{1}{\lambda_2})\mathcal H(\bar\eta,\bar p_1)+C(a\mathcal I(\bar\eta))^{\kappa}.
\end{align*}
This together with \eqref{rela2} yields  
\begin{align}\label{rela11}
-\frac{\sigma}{4T}-\frac{\varrho}{T^2}+\frac{\bar t-\bar s}{\delta^2}+O(\frac{\tau+h}{\delta^2})+\mathcal F(\bar\eta)+\lambda_2\bar\gamma(\frac{1}{\lambda_2})\mathcal H(\bar\eta,\bar p_1)-\mathcal O_{\bar\eta}(\bar q)+C(a\mathcal I(\bar\eta))^{\kappa}\ge 0.
\end{align}
Combining \eqref{rela11} with \eqref{rela1} concludes 
\begin{align}\label{error_esti1}
&\frac{\sigma}{2T}+\gamma(\frac{\lambda_1}{\lambda})\mathcal H(\bar\xi,\bar p_1)-\lambda_2\bar\gamma(\frac{1}{\lambda_2})\mathcal H(\bar\eta,\bar p_1)+\mathcal F(\bar\xi)-\mathcal F(\bar \eta)\notag\\
&\leq -\frac{2\varrho}{T^2}+(1-\lambda)\|\mathcal F\|_{\infty}+\mathcal O_{\bar\xi}(\bar p)-\mathcal O_{\bar\eta}(\bar q)+\omega_0(a,\delta)+O(\frac{\tau+h}{\delta^2}),
\end{align}
where  $\omega_0(a,\delta):=C((a\mathcal I(\bar\xi))^{\kappa}+(a\mathcal I(\bar\eta))^{\kappa})$. 

\textit{Step 2: Error estimate.} 
By (H-\romannumeral1), we take $\lambda_2$ close to 1 such that $z_0:=\gamma(\frac{\lambda_1}{\lambda})-\lambda_2\bar\gamma(\frac{1}{\lambda_2})>0.$ 
Then applying (H-\romannumeral3) to the term $\mathcal H(\bar\eta,\bar p_1)$ in \eqref{error_esti1}, and using the positivity of $z_0,\varrho$, we obtain \begin{align}\label{sigmaineq}
&\quad \frac{\sigma}{2T}+\lambda_2\bar{\gamma}(\frac{1}{\lambda_2})(\mathcal H(\bar\xi,\bar p_1)-\mathcal H(\bar\eta,\bar p_1))+\mathcal F(\bar\xi)-\mathcal F(\bar \eta)+z_0\theta_{ac_1}\|\bar p_1\|^{\kappa}_{\bar\xi}\notag\\
&\leq (1-\lambda)\|\mathcal F\|_{\infty}+\mathcal O_{\bar\xi}(\bar p)-\mathcal O_{\bar\eta}(\bar q)+\omega_0(a,\delta)+O(\frac{\tau+h}{\delta^2}). \end{align} 
By \cite[Example 5.3]{MCC}, we have 
\begin{align}\label{eq02051} \mathcal O_{\bar\xi}(\bar p_2)+\mathcal O_{\bar\eta}(\bar q_2)\leq 0,\quad \mathcal O_{\bar\xi}(\bar p_1)+\mathcal O_{\bar\eta}(\bar q_1)\leq C(ac_1)\delta^{-2}\|\bar\xi-\bar\eta\|^2_{l^2}
\end{align} with some $C(ac_1)>0.$  
Applying (H-\romannumeral4) to \eqref{sigmaineq} yields 
\begin{align}\label{good_esti}
\frac{\sigma}{2T}&\leq (1-\lambda)\|\mathcal F\|_{\infty}+\lambda_2\bar\gamma(\frac{1}{\lambda_2})\mathfrak m_{ac_1}(\|\bar\xi-\bar\eta\|_{l^2})\|\bar p_1\|^{\kappa}_{\bar\xi}+|\mathcal F(\bar\xi)-\mathcal F(\bar\eta)|+C(a)\delta^{-2}\|\bar\xi-\bar\eta\|^2_{l^2}\notag\\
&\quad +\omega_0(a,\delta)+O(\frac{\tau+h}{\delta^2})+C(a)|\|\bar p_1\|_{\bar \xi}-\|\bar p_1\|_{\bar\eta}|(\|\bar p_1\|^{\kappa-1}_{\bar\xi}+ \|\bar p_1\|^{\kappa-1}_{\bar\eta}), 
\end{align}
where the term $|\|\bar p_1\|_{\bar \xi}-\|\bar p_1\|_{\bar\eta}|$ can be estimated as 
\begin{align}\label{small_diffxi}
|\|\bar p_1\|_{\bar \xi}-\|\bar p_1\|_{\bar\eta}|\leq \frac12\sum_{(i,j)\in E}(\bar p_1)_{ij}^2|g_{i,j}(\bar\xi)-g_{i,j}(\bar\eta)|(\|\bar p_1\|_{\bar\xi}+\|\bar p_1\|_{\bar\eta})^{-1}\leq C(a)\|\bar\xi-\bar\eta\|_{l^2}
\end{align}
for some $C(a)>0,$ due to assumption (g-\romannumeral1) on $g$ (see Assumption \ref{assumption_g}). 

Recalling \eqref{smalldelta}, we have shown that $\|\bar \xi-\bar\eta \|_{l^2}\leq C\delta$  with some $C>0.$ We now aim to improve the smallness estimate of $\|\bar\xi-\bar\eta\|_{l^2}$. We  also need to give the upper bound estimates of $\|\bar p_1\|_{\bar\xi},\|\bar p_1\|_{\bar\eta}.$ Note that when $\bar t,\bar s,\bar\eta$ are fixed, the mapping $\xi\mapsto u(\bar t,\xi)-\frac{1}{\lambda \delta^2}\mathcal J(\xi,\bar\eta)-\frac{a}{\lambda}\mathcal I(\xi)$ is maximized at $\xi=\bar\xi\in\mathcal P_{ac_1}(G)$, due to that $\Psi_{\varrho,a,\delta}$ takes maximum at $(\bar t,\bar\xi,\bar s,\bar\eta).$ 
It follows from Proposition \ref{prop_exact} that $$\Big|-\frac{1}{\lambda\delta^2}\partial_{x_1}\widetilde{\mathcal J}(\cdot,\bar y)(\bar x)-\frac{a}{\lambda}\partial_{x_1}\widetilde{\mathcal I}(\bar x)\Big|\vee \Big|-\frac{1}{\lambda\delta^2}\partial_{x_2}\widetilde{\mathcal J}(\cdot,\bar y)(\bar x)-\frac{a}{\lambda}\partial_{x_2}\widetilde{\mathcal I}(\bar x)\Big|\leq \sqrt 2L_2(ac_1),$$ where $L_2(\cdot)$ is the Lipschitz constant of $u$ given in Proposition \ref{prop_exact}. 
 Hence by Lemma \ref{lemma1}, we deduce that   $\|-\frac{1}{\delta^2\lambda}\nabla_{\mathcal W} \mathcal J(\cdot,\bar\eta)(\bar\xi)-\frac{a}{\lambda} \nabla_{\mathcal W}  \mathcal I(\bar\xi)\|_{l^2}\leq C L_2(ac_1).$
  This, together with $\lambda<1$  yields 
\begin{align}
\label{L2ineq}
\|\frac{1}{\delta^2}\nabla_{\mathcal W}\mathcal J(\cdot,\bar\eta)(\bar\xi)\|_{l^2}=\|\frac{1}{\delta^2}\nabla_G(\bar\xi-\bar\eta)\|_{l^2}\leq C(L_2(ac_1)+a\|\nabla_{\mathcal W}\mathcal I(\bar\xi)\|_{l^2})\leq C(L_2(ac_1)+a^{-1})
\end{align} for some $C:=C(d)>0,$ 
where we use the triangle inequality and 
$\|\nabla_{\mathcal W}\mathcal I(\bar\xi)\|_{l^2}\leq C\sum_{l=1}^d\frac{1}{\bar\xi_l^2}\leq Ca^{-2}$ for $ \bar\xi\in\mathcal P_{ac_1}(G).
$ By virtue of the fact $\nabla_{\mathcal W}\mathcal J(\cdot,\bar\eta)(\bar\xi)=-\nabla_{\mathcal W}\mathcal J(\bar\xi,\cdot)(\bar\eta)$ and relation \eqref{L2ineq}, we obtain that for some $C:=C(d)>0,$
\begin{align}\label{R1}
&\quad \|\frac{1}{\delta^2}\nabla_{\mathcal W}\mathcal J(\bar\xi,\cdot)(\bar\eta)+a\nabla_{\mathcal W}\mathcal I(\bar\eta)\|_{\infty}\notag\\
&\leq C\|\frac{1}{\delta^2}\nabla_{\mathcal W}\mathcal J(\bar\xi,\cdot)(\bar\eta)+a\nabla_{\mathcal W}\mathcal I(\bar\eta)\|_{l^2}
\leq C(L_2(ac_1)+a^{-1})=:C_0. 
\end{align} 
In addition, from \eqref{L2ineq} and the fact that $\bar\xi,\bar\eta$ are discrete probability measures on graphs, 
we see that 
\begin{align}\label{delta^2}
\|\bar\xi-\bar\eta\|_{l^2}\leq C\|\nabla_G(\bar\xi-\bar\eta)\|_{l^2}\leq C(a)\delta^2, 
\end{align}  
and $\|\bar p_1\|_{\bar\xi}\vee \|\bar p_1\|_{\bar\eta}\leq C(a)\|\bar p_1\|_{l^2}\leq C(a)$ due to \eqref{L2ineq} for $\bar\xi,\bar\eta\in\mathcal P_{ac_1}(G)$.

 Set $\delta=(\tau+h)^{\frac14}.$ Using \eqref{small_I}, \eqref{delta^2}, the CFL-type cindition \eqref{cond_CLF}, and continuity of $\mathcal F$, and letting $\lambda\to1,h\to0$ and $a\to0$ in  \eqref{good_esti} yield the convergene.

\textbf{Analysis for cases $\bar t\ge 0,\bar s=0$ and $\bar t=0,\bar s>0.$} 

For the case 
 $\bar t\ge 0,\bar s=0,$ it follows from the triangle inequality, $\bar\xi,\bar\eta\in\mathcal P_{ac_1}(G)$, and Proposition \ref{prop_exact} that
\begin{align}\label{threelimit}
\Phi_{\varrho,a,\delta}(\bar t,\bar\xi,0,\bar \eta)&\leq \lambda |u(\bar t,\bar\xi)-u(\bar t,\bar\eta)|+\lambda |u(\bar t,\bar\eta)-u(0,\bar\eta)|+(1-\lambda)|u(0,\bar\eta)|\notag\\
&\leq \lambda L_2(ac_1)\|\bar\xi-\bar\eta\|_{l^2}+\lambda L_1\bar t+(1-\lambda)\|\mathcal U_0\|_{\infty}.
\end{align}
By \eqref{limit1}, we have 
\begin{align*}
\lim_{\varrho\to0}\lim_{a\to0}\lim_{\delta\to0}\Phi_{\varrho,a,\delta}(\bar t,\bar\xi,0,\bar\eta)=\sup_{(s,\eta)\in\mathbb Q^{\tau}_{h,0}}\Phi_0(s,\eta)\ge \sup_{(s,\eta)\in\mathbb Q^{\tau}_{h,0}}(\lambda u(s,\eta)-U(s,\eta))-\frac{\sigma}{2},
\end{align*}
which together with \eqref{threelimit} and \eqref{smalldelta} leads to 
$\sup_{(s,\eta)\in\mathbb Q^{\tau}_{h,0}}(\lambda u(s,\eta)-U(s,\eta))\leq (1-\lambda)\|\mathcal U_0\|_{\infty}+\frac{\sigma}{2}.$ Recalling the definition of $\sigma,$ we obtain \begin{align*}
\sigma=\sup_{(s,\eta)\in\mathbb Q^{\tau}_{h,0}}(u(s,\eta)-U(s,\eta))&\leq (1-\lambda)\sup_{(s,\eta)\in\mathbb Q^{\tau}_{h,0}}|u(s,\eta)|+\sup_{(s,\eta)\in\mathbb Q^{\tau}_{h,0}}(\lambda u(s,\eta)-U(s,\eta))\\
&\leq (1-\lambda)(M+\|\mathcal U_0\|_{\infty})+\frac{\sigma}{2}.
\end{align*}
Letting $\lambda\to1$ yields $\sigma=0$ for any $\tau,h.$

For the case $\bar t=0,\bar s>0,$ utilizing Lemma \ref{lem1} (\romannumeral4), we have 
\begin{align*}
\Phi_{\varrho,a,\delta}(0,\bar\xi,\bar s,\bar\eta)&\leq \lambda |u(0,\bar\xi)-u(0,\bar\eta)|+(1-\lambda)|u(0,\bar\eta)|+|u(0,\bar\eta)-U(\bar s,\bar\eta)|\\
&\leq L_2(ac_1)\lambda \|\bar\xi-\bar\eta\|_{l^2}+(1-\lambda)\|\mathcal U_0\|_{\infty}+K\bar s.
\end{align*}
 The remaining proof is similar to that of the case $\bar t\ge 0,\bar s=0$ and thus is omitted.  
\end{proof}
Moreover, from the above proof, using the $l^2$-Lipschitz condition on $\mathcal F$ and the linear growth property of $\mathfrak m_{ac_1}$, we have \begin{align*}
\sigma\leq C(a)\delta^2+O(\frac{\tau+h}{\delta^2})+\omega_0(a,\delta)\leq C(a)(\tau+h)^{\frac12}+\omega_0(a,h)
\end{align*}
for any fixed $a\in(0,1),$ where  $\omega_0(a,h)$ is such that $\lim\limits_{a\to0}\limsup\limits_{h\to0}\omega_0(a,h)=0.$
Thus we have obtained the convergence order $\frac12$ except the term $\omega_0(a,h),$ which arises because the viscosity solution is defined in the interior of the simplex. 
To quantify the convergence order of the proposed scheme, we take Assumption \ref{ass_H2} into consideration and consider the approximation error on $\mathcal P_{\epsilon}(G)$.

\begin{proof}[Proof of Theorem \ref{thm1} (\romannumeral2).] Denote  
$\mathbb Q^{\tau}_{h,\epsilon}:=\{(t_n,(j,k,l)h)\in[0,T]\times {\mathcal P}_{\epsilon}^h(G): (j, k,l)\in\mathcal  N_{\epsilon};n=0,\ldots,N_T\}$. 
Without loss of generality, we still assume that 
$\sigma:=\sup_{(t_n,\xi)\in{\mathbb Q}^{\tau}_{h,\epsilon}}( u(t_n,\xi)-U(t_n,\xi))>0.$ 

 For $(t,\xi)\in[0,T]\times {\mathcal P}_{\epsilon}(G)$ and $(s,\eta)\in{\mathbb Q}^{\tau}_{h,\epsilon},$ define a new  auxiliary function 
\begin{align}\label{new_aux}
\Psi_{\delta}(t,\xi,s,\eta)= u(t,\xi)- U(s,\eta)-\frac{\sigma}{4T}(t+s)-\frac{1}{2\delta^2}(|t-s|^2+2\mathcal J(\xi,\eta)).
\end{align}
Since  
 $\Psi_{\delta}$ is continuous with variable $(t,\xi)\in[0,T]\times {\mathcal P}_{\epsilon}(G)$ and takes finite value on  ${\mathbb Q}^{\tau}_{h,\epsilon}$,
  there exists a maximizer 
 $(\hat t,\hat \xi,\hat s,\hat\eta)\in [0,T]\times {\mathcal P}_{\epsilon}(G)\times {\mathbb Q}^{\tau}_{h,\epsilon}$ for $\Psi_{\delta}$ when $\delta$ is fixed. 
 Similar to the argument of \eqref{smalldelta}, we have $
 M_{\delta}+\frac{|\hat t-\hat s|^2+\|\hat\xi-\hat\eta\|^2_{l^2}}{4\delta^2}=\Psi_{\sqrt2\delta}(\hat t,\hat \xi,\hat s,\hat\eta)\leq M_{\sqrt2\delta},
$ where $M_{\delta}:=\sup_{[0,T]\times\mathcal P_{\epsilon}(G)\times\mathbb Q^{\tau}_{h,\epsilon}}\Psi_{\delta}( t,\xi, s,\eta)$, and thus 
 \begin{align}\label{smalldelta2}\lim_{\delta\to0}\frac{|\hat t-\hat s|^2+\|\hat\xi-\hat\eta\|^2_{l^2}}{\delta^2}=0,\quad \lim_{\delta\to0}M_{\delta}=\sup_{(s,\eta)\in\mathbb Q^{\tau}_{h,\epsilon}}(u(s,\eta)-U(s,\eta)-\frac{\sigma s}{2T}).
 \end{align}
Moreover, by the definition of $M_{\delta}$ and \eqref{smalldelta2}, it holds that  
\begin{align}\label{rela4}
\lim_{\delta\to0}M_{\delta}=\lim_{\delta\to0}\Psi_{\delta}(\hat t,\hat\xi,\hat s,\hat\eta)\ge \frac{\sigma}{2}. 
\end{align}
 Next we split the proof of the convergence order into three cases based on whether $\hat t$ or $\hat s$ equals zero.

\textbf{Case 1: $\hat t>0,\hat s>0.$} When $\hat s,\hat\eta$ are fixed, 
the mapping 
$
(t,\xi)\mapsto  u(t,\xi)-\frac{\sigma t}{4T}-\frac{1}{2\delta^2}(|t-\hat s|^2+2\mathcal J(\xi,\hat\eta))
$
takes the maximum value at point $(\hat t,\hat\xi).$  By the definition of the viscosity subsolution, we have that when $\hat\xi\in\mathcal P_{\epsilon}^{\circ}(G),$
\begin{align}\label{ineq1}
\frac{\sigma}{4T}+\frac{\hat t-\hat s}{\delta^2}+\mathcal H\big(\hat\xi, \frac{\nabla_G(\hat \xi-\hat\eta)}{\delta^2}\big)+{\mathcal F}(\hat\xi)\leq \mathcal O_{\hat\xi}(\frac{\nabla_G(\hat \xi-\hat\eta)}{\delta^2}).
\end{align}
When $\hat\xi\in\partial \mathcal P_{\epsilon}(G),$ for $a\in(0,1),$ we define  
\begin{align*}
\Psi_{a,\delta}(t,\xi)=u(t,\xi)-\frac{\sigma t}{4T}-\frac{1}{2\delta^2}(|t-\hat s|^2+2\mathcal J(\xi,\hat\eta))-\frac{a}{\mathfrak b(\xi)},
\end{align*}
where $\mathfrak b$ satisfies Assumption \ref{ass_H2}. 
For each $a\in(0,1),$ $\Psi_{a,\delta}$ has a local maximizer $(t_a,\xi_a)$ with $\xi_a\in\mathcal P_{\epsilon}^{\circ}(G)$ and \begin{align}\label{local_max}
(t_a,\xi_a)\to (\hat t,\hat\xi),\text{ as } a\to0; 
\end{align} see Appendix \ref{app_vari} for its proof.   
 Hence, according to the definition of viscosity subsolution, we have
\begin{align*}
&\frac{\sigma}{4T}+\frac{t_a-\hat s}{\delta^2}+\mathcal F(\xi_a)+\mathcal H\Big(\xi_a,\frac{\nabla_G (\xi_a-\hat\eta)}{\delta^2}-a\mathfrak b^{-2}(\xi_a)\nabla_{\mathcal W} \mathfrak b(\xi_a)\Big)\\
&\leq \mathcal O_{\xi_a}\Big( \frac{\nabla_G (\xi_a-\hat\eta)}{\delta^2}-a\mathfrak b^{-2}(\xi_a)\nabla_{\mathcal W}\mathfrak b(\xi_a)\Big).
\end{align*}
By Assumption \ref{ass_H2}, letting $a\to0$ also leads to \eqref{ineq1}.

 The mapping 
$
{\mathbb Q}^{\tau}_{h,\epsilon}\ni (s,\eta)\to  U(s,\eta)+\frac{\sigma s}{4T}+\frac{1}{2\delta^2}(|\hat t-s|^2+2\mathcal J(\hat\xi,\eta))$
 takes the minimum value at $(\hat s,\hat\eta):=(n_1\tau,(j_1,k_1,l_1)h).$ This leads to that 
 \begin{align}\label{leadsto}
 U^n_{j,k,l}\ge  U^{n_1}_{j_1,k_1,l_1}+\frac{\sigma}{4T}(n_1-n)\tau+\frac{1}{2\delta^{2}}\big(|\hat t-n_1\tau|^2-|\hat t-n\tau|^2 +2\mathcal J(\hat\xi,\hat\eta)-2\mathcal J(\hat\xi,\eta)\big).
\end{align}
 Recalling Remark \ref{remark1}, one can utilize the monotonicity of the numerical method \eqref{explicit1} and \eqref{leadsto} with $n=n_1-1$ to conclude that when $\hat\eta\in \mathcal P_{\epsilon}^h(G)\backslash \partial\mathcal P_{\epsilon}(G),$\begin{align}\label{tildeY_eq1}
& U^{n_1}_{j_1,k_1,l_1}=(\vec{\mathcal G}( U^{n_1-1}))_{j_1,k_1,l_1}\ge  U^{n_1}_{j_1,k_1,l_1}+\frac{\sigma\tau}{4T}+\frac{1}{2\delta^2}\big(|\hat t-\hat s|^2- |\hat t-\hat s+\tau|^2\big)\notag\\
& -\tau{\mathcal F}(\hat\eta) -\tau\mathcal G\Big(\hat\eta, -\frac{\sqrt{\omega_{1,2}}}{h\delta^2}D^+_{e_{1,2}}\mathcal J(\hat\xi,\cdot)(\hat\eta),  -\frac{\sqrt{\omega_{1,2}}}{h\delta^2}D^-_{e_{1,2}}\mathcal J(\hat\xi,\cdot)(\hat\eta); -\frac{\sqrt{\omega_{1,3}}}{h\delta^2}D^+_{e_{1,3}}\mathcal J(\hat\xi,\cdot)(\hat\eta),\notag\\ &-\frac{\sqrt{\omega_{1,3}}}{h\delta^2}D^-_{e_{1,3}} \mathcal J(\hat\xi,\cdot)(\hat\eta);-\frac{\sqrt{\omega_{2,3}}}{h\delta^2}D^+_{e_{2,3}}\mathcal J(\hat\xi,\cdot)(\hat\eta) ,-  \frac{\sqrt{\omega_{2,3}}}{h\delta^2}D^-_{e_{2,3}}\mathcal J(\hat\xi,\cdot)(\hat\eta) \Big).
\end{align}
Similar to the proof of \eqref{nabla_relation}, utilizing  Lemma \ref{lemma1},  
and then by means of the consistency and local Lipschitz condition of the numerical Hamiltonian function $\mathcal G,$  we arrive at
\begin{align}\label{rela22}
-\frac{\sigma}{4T}+\frac{\hat t-\hat s}{\delta^2}+O(\frac{\tau+h}{\delta^2})+{\mathcal F}(\hat\eta)+\mathcal H\big(\hat\eta,\frac{\nabla_G(\hat\xi-\hat\eta)}{\delta^2}\big)-\mathcal O_{\hat\eta}\big(\frac{\nabla_G(\hat\xi-\hat\eta)}{\delta^2}\big)\ge 0.
\end{align}
This, combining \eqref{ineq1} and \eqref{eq02051} implies that
\begin{align}\label{rela111}
&\frac{\sigma}{2T}+\mathcal H(\hat\xi,\frac{\nabla_G(\hat\xi-\hat\eta)}{\delta^2} )-\mathcal H(\hat\eta,\frac{\nabla_G(\hat\xi-\hat\eta)}{\delta^2} )+{\mathcal F}(\hat\xi)-{\mathcal F}(\hat\eta )\leq O(\frac{\tau+h}{\delta^2})+\frac{C_{\epsilon}\|\hat\xi-\hat\eta\|^2_{l^2}}{\delta^2}.
\end{align} 
When $\hat\eta\in\partial\mathcal P_{\epsilon}(G)$,  by \eqref{boundary1} or  \eqref{boundary}, repeating arguments of \eqref{tildeY_eq1}, we derive that 
$\frac{\sigma}{2T}+\mathcal H(\hat\xi,\frac{\nabla_G(\hat\xi-\hat\eta)}{\delta^2} )-\mathcal H(\hat\eta,\frac{\nabla_G(\hat\xi-\hat\eta)}{\delta^2} )+{\mathcal F}(\hat\xi)-{\mathcal F}(\hat\eta )\leq O(\frac{\tau+h}{\delta^2})+\frac{C_{\epsilon}\|\hat\xi-\hat\eta\|^2_{l^2}}{\delta^2}+C\frac{h}{\tau}\|\hat\xi-\hat\eta\|_{l^2}.$ 
By (H-\romannumeral4) we have that 
\begin{align*}
&\mathcal H(\hat\xi,\bar p)-\mathcal H(\hat \eta,\bar p)\ge -\mathfrak m_{\epsilon} (\|\hat\xi-\hat\eta\|_{l^2})\|\bar p\|^{\kappa}_{\hat\xi}-C_{\epsilon}\big|\|\bar p\|_{\hat\xi}-\|\bar p\|_{\hat\eta}\big|(\|\bar p\|^{\kappa-1}_{\hat \xi}+\|\bar p\|^{\kappa-1}_{\hat\eta }).
\end{align*}
Then combining \eqref{small_diffxi}, the $l^2$-Lipschitz condition on $\mathcal F$ and the linear growth condition on the modulus of $\mathfrak m_{\epsilon}$,
we deduce from \eqref{rela111} that for some $C$ depending on $\epsilon$,
\begin{align}\label{case1}
\sigma\leq O(\frac{\tau+h}{\delta^2})+C(\|\hat\xi-\hat\eta\|_{l^2}+\frac{\|\hat\xi-\hat\eta\|_{l^2}^2}{\delta^2}).
\end{align}

Let $\delta=(\tau+h)^{\frac14}.$ Recalling \eqref{smalldelta2}, we have shown that $|\hat t-\hat s|\vee \|\hat \xi-\hat\eta \|_{l^2}\leq C\delta.$  We now aim to improve the smallness estimates of $|\hat t-\hat s|, \|\hat \xi-\hat\eta \|_{l^2}$ to obtain the convergence order $\frac12$. Recalling that $\Psi_{\delta}$ is maximized at $(\hat t,\hat \xi,\hat s,\hat \eta),$ we have that when $\hat t,\hat s,\hat\xi$ are fixed, 
the mapping $\xi\to  u(\hat  t,\xi)-\frac{1}{2\delta^2}\| \xi-\hat\eta\|^2_{l^2}$ is maximized at $\xi=\hat\xi$.  Similar to the argument of \eqref{L2ineq}, 
we derive that when $\hat\xi\in\mathcal P^{\circ}_{\epsilon}(G),$ 
$
\|\hat \xi-\hat\eta \|_{l^2}\leq CL_2(\epsilon)\delta^2.
$ 
This yields \begin{align}\label{result1}
\sigma\leq C(\tau^{\frac12}+h^{\frac12})
\end{align} for $\hat\xi\in\mathcal P_{\epsilon}^{\circ}(G),\hat\eta\in \mathcal P_{\epsilon}^h(G)$. Moreover, by replacing the constant $a$ in  \eqref{L2ineq} by $\epsilon$ and using the Lipschitz cindition of $\mathcal U_0$, we derive that the arguments of $\mathcal G(\hat\eta,\cdot)$ in \eqref{tildeY_eq1} are no larger than some $R>1+\max\{\sqrt 2\mathrm{Lip}(\mathcal U_0),C(d)L_2(\epsilon)\}.$

\textbf{Case 2: $\hat t\ge 0,\hat s=0.$} When $\hat s,\hat\xi,\hat\eta$ are fixed, the mapping $t\to u(t,\hat\xi)-\frac{\sigma t}{4T}-\frac{1}{2\delta^2}|t-\hat s|^2$ is maximized at $t=\hat t>0.$ Then one can derive from Proposition \ref{prop_exact} that  
\begin{align}\label{esti2}
\hat t-\hat s\leq (L_1-\frac{\sigma}{4T})\delta^2, \quad \hat t=T,\text{ and }|\hat t-\hat s|\leq (L_1+\frac{\sigma}{4T})\delta^2, \quad \hat t\in(0,T).
\end{align}
From \eqref{rela4},  Proposition \ref{prop_exact} and \eqref{esti2}, it follows that 
\begin{align*}
\frac{\sigma}{2}&\leq \Psi_{\delta}(\hat t,\hat\xi,0,\hat\eta)\leq | u(\hat t,\hat\xi)- u(\hat t,\hat\eta)|+| u(\hat t,\hat\eta)- u(0,\hat\eta)|
\leq L_2\|\hat\xi-\hat\eta\|_{l^2}+L_1\hat t,
\end{align*}
which implies that \begin{align}\label{result2}
\sigma\leq C(\tau^{\frac12}+h^{\frac12}).
\end{align} 

\textbf{Case 3: $\hat t=0,\hat s>0.$} In this case,  using \eqref{rela4} and  Proposition \ref{prop_exact} again, combining  Lemma \ref{lem1} (\romannumeral4) and the Lipschitz condition on the initial value, we obtain
\begin{align*}
\frac{\sigma}{2}
&\leq u(0,\hat\xi)-u(0,\hat\eta)+U^0(\hat\eta)-U^{n_1}(\hat\eta)
\leq L_2\|\hat\xi-\hat\eta\|_{l^2}+K\hat s.
\end{align*}
From $\Psi_{\delta}(\hat t,\hat\xi,\hat s,\hat\eta )=\Psi_{\delta}(0,\hat \xi,\hat s,\hat\eta)\ge \Psi_{\delta}(0,\hat\xi,\hat s-\tau,\hat\eta)$ due to that $(\hat t,\hat\xi,\hat s,\hat\eta)$ is the maximizer,  we conclude that 
\begin{align*}
- U^{n_1}_{j_1,k_1,l_1}-\frac{\sigma \hat s}{4T}-\frac{1}{2\delta^2}\hat s^2\ge - U^{n_1-1}_{j_1,k_1,l_1}-\frac{\sigma (\hat s-\tau)}{4T}-\frac{1}{2\delta^2}(\hat s-\tau)^2.
\end{align*}
Thus $
\frac{1}{\delta^2}(\hat s^2-(\hat s -\tau)^2)\leq  U^{n_1-1}_{j_1,k_1,l_1}- U^{n_1}_{j_1,k_1,l_1}-\frac{\sigma\tau}{4T}\leq K\tau. 
$ 
Thus we obtain $\hat s \leq C(\delta^2+\tau),$ which yields that \begin{align}\label{result3}
\sigma\leq C(\delta^2+\tau)\leq C(\tau^{\frac12}+h^{\frac12}).
\end{align}

Combining estimates \eqref{result1}, \eqref{result2} and \eqref{result3} finishes the proof of the convergence order.  
\end{proof}

\section{Numerical experiment}\label{sec_6}
In this section, we present numerical experiments to illustrate the convergence of the proposed numerical methods.     
We consider the HJE on the Wasserstein space on graphs with noise intensity $\lambda_1>0$  
\begin{align}\label{lap_eq}
\partial_tu(t,\xi)+\mathcal H(\xi,\nabla_{\mathcal W} u(t,\xi))+\mathcal F(\xi)=\lambda_1\Delta_{\mathrm{ind}}u(t,\xi),
\quad u(0,\cdot)=\mathcal U_0.
\end{align}  
We set $d=3,$ 
$\omega_{1,2}=\omega_{1,3}=\omega_{2,3}=1,$ $g(r,s)=\frac{r+s}{2}.$ Define $\mathcal I_{\theta}(\xi):=\sum_{i=1}^3\xi_i^{-\theta}$. For the implicit method, we restrict the maximum number of iterations to $10$ and set the tolerance error to $10^{-6}.$ In the numerical experiments, the specified boundary extrapolations \eqref{boundary1} and \eqref{boundary}  are implemented via MATLAB’s \texttt{scatteredInterpolant} function, with  the \texttt{ExtrapolationMethod} being set to   `\texttt{nearest}' and `\texttt{linear}', respectively.

We take the initial value $\mathcal U_0(\xi)=\|\xi\|^2_{l^2},$ the potential function $\mathcal F\equiv 0$, and the coefficient function $\mathfrak a(\xi)=\mathcal I^{-2}(\xi)$ with $\mathcal I(\xi)=\sum_{i=1}^3\frac{1}{\xi_i}.$ The Hamiltonian is formulated as $\mathcal H(\xi,p)=\mathfrak a(\xi)\|p\|_{\xi}^{2}.$ The parameters are set as follows: $\epsilon=0.01,$ $\frac{\tau}{h}=0.05$, $\lambda_1=0.5$
 and $T=0.4.$ 
Table \ref{table1} presents an investigation of the temporal convergence order for proposed explicit and implicit methods.   
The reference solution is generated using the same numerical method with a refined spatial mesh ($N=512$) to ensure accuracy. The numerical results demonstrate that proposed methods can achieve first-order temporal convergence in both $L^{\infty}$ and $L^1$ norms. This observation 
 is consistent with theoretical guarantees for classical HJEs on Euclidean space (see e.g. \cite{splitting,firstorder}), where the rigorous error bounds can be established by requiring structural assumptions such as semi-concavity conditions or utilizing some techniques such as operator splitting. 
 We will study the analysis of the first-order accuracy of the proposed schemes in the future.

\begin{table}[tbhp]
\begin{tabular}{ccccc|ccccc}
\hline
\multicolumn{5}{c|}{Explicit method}&\multicolumn{4}{c}{Implicit method}\\
\hline
N & $L^{\infty}$-error  & Order & $L^1$-error  & Order & $L^{\infty}$-error  & Order & $L^1$-error  & Order\\
\hline
 16& 0.0121 & -- &0.0098 &--&0.0137& -- & 0.0107&--\\
\hline
 32&0.0052& 1.2234& 0.0041& 1.2406& 0.0059&1.2149&0.0046 & 1.2218\\
\hline
 64&0.0021 &1.3059 &0.0017 &1.2816 &0.0023&1.3435 & 0.0019&1.2636\\
\hline
 128&8.7759e-04 &1.2587 & 6.9843e-04&1.2883&9.7202e-04&1.2556&7.6839e-04&1.3207 \\
\hline
256&3.5721e-04 &1.2968 & 2.5380e-04& 1.4604&3.5537e-04&1.4517&2.7784e-04&1.4676\\
\hline
\end{tabular}
\vspace{0.1in}
\caption{{Approximation error}}
\label{table1}
\end{table}
We present some numerical experiments to illustrate the influences of the graph individual noise and the coefficients of Hamiltonians in Appendix \ref{sec_app2}.

\bibliographystyle{plain}
\bibliography{references.bib}

\begin{appendix} 
\section{Some Verifications and Proofs}\label{app_vari}
\subsection{Verifications of examples}

\begin{proof}[\textbf{Verification of Example \ref{exam2}}]
  It can be easily checked that Assumption \ref{ass_G} (\romannumeral1) is satisfied. Based on the fact that $(p_{k,l}^-)^2+(p^+_{k,l})^2=p_{k,l}^2$ and $\mathbf 1_{\{\xi_k\leq \xi_l\}}+\mathbf 1_{\{\xi_k>\xi_l\}}\equiv 1$  for $1\leq k<l\leq d$, 
 the function $\mathcal G$ satisfies Assumption \ref{ass_G} (\romannumeral2) for the Hamiltonian function $\mathcal H$ given in Example \ref{exam1}. As for Assumption \ref{ass_G} (\romannumeral3), noting that $|b_1^--b^-_2|\leq |b_1- b_2|$ and $|b_1^+-b^+_2|\leq |b_1- b_2|$ for any $b_1,b_2\in\mathbb R,$ the local Lipschitz property of $\mathcal G_{\mathcal H}$ can be verified. When $g$ satisfies \eqref{extension}, one can also obtain the local Lipschitz property of $\mathcal G_{\mathcal O}.$   \end{proof}

\begin{proof}[\textbf{Verification of Example \ref{ex_boundH}}] When $d=3,$ we have $\epsilon\in(0,\frac13).$ 
Recalling the definition of inner product $(\cdot,\cdot)_{\xi}$ in Hilbert space $\mathbb H_{\xi}$ and the definition of the graph individual noise (see subsections \ref{notation} and \ref{eq_assp}), we have   
 \begin{align*}
\mathcal A_0:&= -2(\nabla_G(\xi-\eta),\nabla_{\mathcal W}\mathfrak b(\xi))_{\xi}+\mathcal O_{\xi}(\nabla_{\mathcal W}\mathfrak b(\xi))\\
&=(\xi_3-\epsilon)e^{\top}_{1,2}(2\xi-2\eta+\log\xi)(e^{\top}_{1,2}\xi)\omega_{1,2}g_{1,2}(\xi) +(\xi_2-\epsilon)e^{\top}_{1,3}(2\xi-2\eta+\log\xi)(e^{\top}_{1,3}\xi)\omega_{1,3}g_{1,3}(\xi)  \\
&\quad +(\xi_1-\epsilon)e^{\top}_{2,3}(2\xi-2\eta+\log\xi)(e^{\top}_{2,3}\xi)\omega_{2,3}g_{2,3}(\xi).
\end{align*} 
To verify that Assumption \ref{ass_H2} holds, by the fact that $\|\nabla_G(\xi-\eta)-\nabla_{\mathcal W}\mathfrak b(\xi)\|^2_{\xi}-\mathcal O_{\xi}(\nabla_G(\xi-\eta)-\nabla_{\mathcal W}\mathfrak b(\xi))\ge  \|\nabla_G(\xi-\eta)\|^2_{\xi}-\mathcal O_{\xi}(\nabla_G(\xi-\eta))+\mathcal A_0$
 it suffices to show that $\mathcal A_0$ is non-negative when $\xi$ approaches $\partial\mathcal P_{\epsilon}(G)$. 
We take the case $\xi_2\to\epsilon$ as an example. In this case, $\mathcal A_0$ converges to 
\begin{align*}
\mathcal A_1:&= \omega_{1,2}g(\xi_1,\epsilon)(\xi_3-\epsilon)(\xi_1-\epsilon)\Big(\log\frac{\xi_1}{\epsilon}+2(\xi_1-\eta_1)-2(\epsilon-\eta_2)\Big)\\
&\quad +\omega_{2,3}g(\xi_3,\epsilon)(\xi_1-\epsilon)(\epsilon-\xi_3)\Big(\log\frac{\epsilon}{\xi_3}+2(\epsilon-\eta_2)-2(\xi_3-\eta_3)\Big). 
\end{align*} 
If $\xi_1\leq \xi_3,$ then it holds that $\xi_1\in[\epsilon,\frac{1-\epsilon}{2}],$  and that
\begin{align*}
&\mathcal A_1=\mathcal A_{1,1}+\mathcal A_{1,2}:=g(\xi_1,\epsilon)(\xi_1-\epsilon)(1-2\epsilon-\xi_1)\Big(6(\eta_2-\epsilon)+\log\frac{\xi_1(1-\epsilon-\xi_1)}{\epsilon^2}\Big)\\
& +(g(\xi_3,\epsilon)-g(\xi_1,\epsilon))(\xi_1-\epsilon)(1-2\epsilon-\xi_1)\Big(2(\eta_2-\eta_3)+2(1-3\epsilon)+2(\epsilon-\xi_1)+\log\frac{1-\epsilon-\xi_1}{\epsilon}\Big). 
\end{align*}
It is clear that $\mathcal A_{1,1}\ge 0$ for $\xi_1\in[\epsilon,\frac{1-2\epsilon}{2}]$ due to the non-negativity of $g$.  For the term $\mathcal A_{1,2},$ we have $g(\xi_3,\epsilon)-g(\xi_1,\epsilon)\ge 0$ due to the non-decreasing property of $g(\cdot,\epsilon)$, and 
$
2(\eta_2-\eta_3)+2(1-3\epsilon)\ge 0 
$ 
due to $\eta\in\mathcal P_{\epsilon}(G).$  Moreover,  $\log\frac{1-\epsilon-\xi_1}{\epsilon}\ge 2(\xi_1-\epsilon)$ when $1-\xi_1\ge \epsilon(1+ e^{2(\xi_1-\epsilon)})$, which can be garanteed when $\epsilon\leq\frac{1}{2(1+e)}.$ Similarly, if $\xi_1\ge \xi_3,$ we have that $\xi_3\in[\epsilon,\frac{1-\epsilon}{2}],$ and thus
\begin{align*}
\mathcal A_1\ge 0
\end{align*}
 when $\epsilon\leq \frac{1}{2(1+e)}$. 
\end{proof}

\subsection{Proofs of some lemmas, proposition, and theorem}
\begin{proof}[Proof of Lemma \ref{lemma1}]
By the definition of the bijective mapping $\Pi,$ we obtain  
\begin{align*}\nabla^{e_{j,k}}f(\xi)&=\lim_{h\to0}\frac1h(f(\xi+he_{j,k})-f(\xi))=\lim_{h\to0}\frac1h(\tilde f(\Pi(\xi+he_{j,k}))-\tilde f(\Pi(\xi)))\\
&=(\partial_{x_j}+\cdots+\partial_{x_{k-1}})\tilde f(\Pi(\xi)),
\end{align*}
due to that $\Pi(\xi+he_{j,k}))=\Pi(\xi)+h(m_1,\ldots,m_{d-1})$ with $m_l=1$ for $l\in\{j,\ldots,k-1\}$ and $m_l=0$ for $l\notin\{j,\ldots,k-1\}.$ 
The proof is finished. 
\end{proof}

\begin{proof}[Proof of Lemma \ref{lem1}]
For (\romannumeral1), the non-decreasing property of the mapping $\vec{\mathcal G}$ with respect to the argument $\widetilde U^n(s_{\vec i}+rh\vec m_{j,k}),j<k$ for $r=1$ or $r=-1$ is implied by the monotonicity of $\mathcal G$ in Assumption \ref{ass_G} (\romannumeral1). 
It suffices to prove the non-decreasing property of the mapping $\widetilde U^n_{\vec i}\mapsto \widetilde U^{n}_{\vec i}-\tau\widetilde{\mathcal F}(s_{\vec i})-\tau\widetilde{\mathcal G}\big({s_{\vec i}}, [D^+\widetilde U^n_{\vec i}],[D^-\widetilde U^n_{\vec i}]\big)$ for $\vec i\in\mathcal N^{\circ}$. This can be verified when $\frac{\tau}{h}$ satisfy \eqref{cond_CLF}, due to the fact that $\|[D^{\pm}\widetilde U^n_{\vec i}]\|_{l^2}\leq d\|[D^{\pm}\widetilde U^n_{\vec i}]\|_{\infty}\leq dR$ and the local Lipschitz property of $\mathcal G$ in Assumption \ref{ass_G} (\romannumeral3) with $R$ being replaced by $dR$.   

To prove (\romannumeral2), from \eqref{explicit} we  see that  $\vec{\mathcal G}(\widetilde U+\lambda)=\vec{\mathcal G}(\widetilde U)+\lambda,\,\lambda\in\mathbb R$. For $\widetilde U,\widetilde V\in\mathcal M_{0}$, we have $\widetilde U\leq \widetilde V+\lambda$ with $\lambda=\|(\widetilde U-\widetilde V)^+\|_{\infty}$ and $\widetilde V+\lambda\in\mathcal M_{0}.$ By the non-decreasing property of $\vec{\mathcal G}$ (see Definition \ref{def_monotone}), we have 
$\vec{\mathcal G}(\widetilde U)\leq \vec{\mathcal G}(\widetilde V+\lambda)=\vec{\mathcal G}(\widetilde V)+\lambda.$ Hence we have $\|(\vec{\mathcal G}(\widetilde U)-\vec{\mathcal G}(\widetilde V))^+\|_{\infty}\leq \|(\widetilde U-\widetilde V)^{+}\|_{\infty}$ and similarly, $\|(\vec{\mathcal G}(\widetilde U)-\vec{\mathcal G}(\widetilde V))^-\|_{\infty}=\|(\vec{\mathcal G}(\widetilde V)-\vec{\mathcal G}(\widetilde U))^+\|_{\infty}\leq \|(\widetilde V-\widetilde U)^{+}\|_{\infty}=\|(\widetilde U-\widetilde V)^{-}\|_{\infty}$. 
Thus we derive  $\|\vec{\mathcal G}(\widetilde U)-\vec{\mathcal G}(\widetilde V)\|_{\infty}\leq \|\widetilde U-\widetilde V\|_{\infty}$.

To prove (\romannumeral3), for each fixed multi-index $m_{j,k}$, we define the transition 	 $\tau_{\vec m_{j,k}}$ by $(\tau_{\vec m_{j,k}}\widetilde U)_{\vec i}:=\widetilde U_{\vec m_{j,k}+\vec i}$ for $\vec i\in\mathcal N^{\circ},$ then $\tau_{\vec m_{j,k}}\vec{\mathcal G}(\widetilde U)=\vec{\mathcal G}(\tau_{\vec m_{j,k}}\widetilde U)$. It follows from (\romannumeral2) that when $\tau_{\vec m_{j,k}}\vec{\mathcal G}(\widetilde U)\in \widetilde{\mathcal P}^h(G)\backslash \partial\widetilde{ \mathcal P}(G),$
\begin{align*}
\|D^+_{\vec m_{j,k}}\vec{\mathcal G}(\widetilde U)\|_{\infty}&=\|\tau_{\vec m_{j,k}}\vec{\mathcal G}(\widetilde U)-\vec{\mathcal G}(\widetilde U)\|_{\infty}
=\|\vec{\mathcal G}(\tau_{\vec m_{j,k}}\widetilde U)-\vec{\mathcal G}(\widetilde U)\|_{\infty}\\
&\leq \|\tau_{\vec m_{j,k}}\widetilde U-\widetilde U\|_{\infty}= \|D^+_{\vec m_{j,k}}\widetilde U\|_{\infty}\leq Rh. 
\end{align*}
When $\tau_{\vec m_{j,k}}\vec{\mathcal G}(\widetilde U)\in\widetilde{\mathcal P}^h(G)\cap\partial \widetilde{\mathcal P}(G),$ by the specified value on the boundary (see \eqref{boundary1} or \eqref{boundary}), 
$\|D^+_{\vec m_{j,k}}\vec{\mathcal G}(\widetilde U)\|_{\infty}\leq Rh$ still holds. Similar proof can lead to $\|D^-_{\vec m_{j,k}}\vec {\mathcal G}(\widetilde U)\|_{\infty}\leq Rh.$
 
For (\romannumeral4), using (\romannumeral2), we have 
\begin{align*}
\|\vec{\mathcal G}^{(n+j)}(\widetilde U)-\vec{\mathcal G}^{(n)}(\widetilde U)\|_{\infty}&\leq \|\vec{\mathcal G}^{(j)}(\widetilde U)-\widetilde U\|_{\infty}\leq \sum_{l=0}^{j-1}\|\vec{\mathcal G}^{(j-l)}(\widetilde U)-\vec{\mathcal G}^{(j-l-1)}(\widetilde U)\|_{\infty}\leq Kj\tau.
\end{align*}
The proof is finished. 
\end{proof}

\begin{proof}[Proof of \eqref{local_max}]
 For $r>0,$ define the set $A(r):=[\hat t-r,\hat t+r]\times (\mathcal P_{\epsilon}(G)\cap B(\hat\xi,r))$, where $B(\hat\xi,r)$ is the closed ball with center $\hat\xi$ and radius $r>0.$ Without loss of generality, we may assume that the maximum $(\hat t,\hat \xi)$ is strict, namely, there exists some small $r_0>0$ such that 
 \begin{align}\label{strict}
 u(t,\xi)-\varphi(t,\xi)<u(\hat t,\hat\xi)-\varphi(\hat t,\hat\xi),\quad \forall (t,\xi)\in A(r_0)\backslash(\hat t,\hat\xi),
 \end{align}
 where we let $\varphi(t,\xi)=\frac{\sigma t}{4T}-\frac{1}{2\delta^2}(|t-\hat s|^2+\|\xi-\hat\eta\|^2_{l^2}).$
Recall that $\Phi_{a,\delta}:=u-\varphi-\frac{a}{\mathfrak b}$ with the distance function $\mathfrak b$ satisying Assumption \ref{ass_H2}. For such $r_0>0,$ there exists local maximizer $(t_a,\xi_a)\in A(r_0)$ for $\Psi_{a,\delta}$ when $a$ is fixed, namely, $\Psi_{a,\delta}(t_a,\xi_a)=\sup_{(t,\xi)\in A(r_0)}\Psi_{a,\delta}(t,\xi).$ We aim to show that $\lim_{a\to 0}(t_a,\xi_a)=(\hat t,\hat\xi)$ as $a\to0.$ For every $r\in(0,r_0),$ the continuity of $u-\varphi$ and \eqref{strict} allow us to find $(t_r,\xi_r)\in A(r)\cap ((0,T]\times \mathcal P^{\circ}_{\epsilon}(G))$ such that $u(t,\xi)-\varphi(t,\xi)<u(t_r,\xi_r)-\varphi(t_r,\xi_r)$ for all $(t,\xi)\in A(r_0)\backslash A(r)^{\circ}$. Moreover, for every $r\in(0,r_0),$ there exists $a_r>0,$ such that $\Phi_{a,\delta}(t,\xi)<\Phi_{a,\delta}(t_r,\xi_r)$ for all $(t,\xi)\in A(r_0)\backslash A(r)^{\circ}$ when $a<a_r,$ which implies $(t_a,\xi_a)\in A(r)$ for $a<a_r.$ As a result, we obtain $\lim_{a\to 0}(t_a,\xi_a)=(\hat t,\hat\xi)$ as $a\to0.$  
 \end{proof}

Then we show existence and uniqueness as well as some \textit{a priori} estimates for the solution of the fully-implicit scheme \eqref{implicit1}.  Since numerical methods \eqref{implicit1} and \eqref{implicit} are equivalent based on a coordinate transformation, it suffices to prove the estimates for the solution of \eqref{implicit}. 
For each given $\tau\in(0,1)$ and $\widetilde V\in L^{\infty}(\mathbb R^{|\mathcal N^{\circ }|}),$ define the operator $\mathcal T(\tau,\widetilde V):\mathbb R^{|\mathcal N^{\circ }|}\to\mathbb R^{|\mathcal N^{\circ}|},\,\widetilde U\mapsto \mathcal T(\tau,\widetilde V)\widetilde U$ by 
\begin{align}\label{tildeT}
(\mathcal T(\tau,\widetilde V)\widetilde U)_{\vec i}&=\widetilde V_{\vec i}-\tau\widetilde{\mathcal F}(s_{\vec i})-\tau\widetilde{\mathcal G}\big({s_{\vec i}},[D^+\widetilde U_{\vec i}],[D^-\widetilde U_{\vec i}]\big),\quad \vec i\in\mathcal N^{\circ}.
\end{align}  
We first give the existence of the fixed point for the operator $\mathcal T(\tau,\widetilde V)$.  
\begin{lemma}\label{lem2}
Let Assumptions \ref{assumption_g}--\ref{ass_G} hold. For given $\tau\in(0,1)$ and $\widetilde V\in L^{\infty}(\mathbb R^{|\mathcal N^{\circ}|}),$ there exists a unique fixed point $\widetilde U\in L^{\infty}(\mathbb R^{|\mathcal N^{\circ}|})$ for the mapping $\mathcal T(\tau,\widetilde V)$, i.e., $\mathcal T(\tau,\widetilde V)\widetilde U=\widetilde U.$ Moreover, it holds that $\|\widetilde U\|_{\infty}\leq \|\tau \widetilde{\mathcal G}(\cdot,0,0)+\tau\widetilde{\mathcal F}-\widetilde V\|_{\infty}$. \end{lemma}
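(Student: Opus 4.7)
The plan is to recast the fixed-point problem as the solvability of $\Phi(\widetilde U)=\widetilde V$ in finite dimensions, where
$$\Phi(\widetilde U)_{\vec i}:=\widetilde U_{\vec i}+\tau\widetilde{\mathcal G}\bigl(s_{\vec i},[D^+\widetilde U_{\vec i}],[D^-\widetilde U_{\vec i}]\bigr)+\tau\widetilde{\mathcal F}(s_{\vec i}),\quad \vec i\in\mathcal N^{\circ},$$
and to extract uniqueness plus the stated $L^{\infty}$ bound from a discrete maximum principle powered by the monotonicity of $\widetilde{\mathcal G}$; existence then comes from a Leray--Schauder/Brouwer-degree continuation. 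Continuity of $\Phi$ is immediate from Assumption \ref{ass_G}(iii).

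For the a priori bound, I would fix a candidate fixed point $\widetilde U$ and pick $\vec i^{*}\in\mathcal N^{\circ}$ maximising $\widetilde U$. Because $\widetilde U_{\vec i^{*}}\ge \widetilde U_{\vec i^{*}\pm\vec m_{j,k}}$ for every admissible stencil neighbour, the $p$-entries of $[D^{+}\widetilde U_{\vec i^{*}}]$ are $\le 0$ and the $q$-entries of $[D^{-}\widetilde U_{\vec i^{*}}]$ are $\ge 0$. The monotonicity in Assumption \ref{ass_G}(i) then yields
$$\widetilde{\mathcal G}\bigl(s_{\vec i^{*}},[D^{+}\widetilde U_{\vec i^{*}}],[D^{-}\widetilde U_{\vec i^{*}}]\bigr)\ge \widetilde{\mathcal G}(s_{\vec i^{*}},0,0),$$
and substituting into $\Phi(\widetilde U)=\widetilde V$ gives $\widetilde U_{\vec i^{*}}\le \widetilde V_{\vec i^{*}}-\tau\widetilde{\mathcal F}(s_{\vec i^{*}})-\tau\widetilde{\mathcal G}(s_{\vec i^{*}},0,0)$. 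A symmetric argument at the minimiser yields the matching lower bound, so $\|\widetilde U\|_{\infty}\le \|\tau\widetilde{\mathcal G}(\cdot,0,0)+\tau\widetilde{\mathcal F}-\widetilde V\|_{\infty}$. Uniqueness follows from the same scheme applied to the difference $\widetilde U^{1}-\widetilde U^{2}$ of two fixed points: at the interior maximiser of $\widetilde U^{1}-\widetilde U^{2}$, the corresponding sign inequalities for the forward and backward differences of $\widetilde U^{1}$ versus $\widetilde U^{2}$ combined with Assumption \ref{ass_G}(i) give $\max(\widetilde U^{1}-\widetilde U^{2})\le 0$; swapping roles yields $\widetilde U^{1}=\widetilde U^{2}$.

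For existence I would run a continuation argument. Define $F_{s}(\widetilde U):=\widetilde U+s\tau\vec{\widetilde{\mathcal G}}(\widetilde U)+s\tau\widetilde{\mathcal F}-\widetilde V$ for $s\in[0,1]$, where $\vec{\widetilde{\mathcal G}}(\widetilde U)_{\vec i}:=\widetilde{\mathcal G}(s_{\vec i},[D^{+}\widetilde U_{\vec i}],[D^{-}\widetilde U_{\vec i}])$. At $s=0$, $F_{0}(\widetilde U)=\widetilde U-\widetilde V$ has the unique zero $\widetilde V$. Repeating the maximum-principle argument of the previous paragraph with $s\tau$ in place of $\tau$ shows that every zero of $F_{s}$ obeys
$$\|\widetilde U\|_{\infty}\le R_{0}:=\|\widetilde V\|_{\infty}+\tau\|\widetilde{\mathcal F}\|_{\infty}+\tau\|\widetilde{\mathcal G}(\cdot,0,0)\|_{\infty},$$
uniformly in $s\in[0,1]$. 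Fix any $R>R_{0}$; then $F_{s}\neq 0$ on $\partial B_{R}$, so the Brouwer degree $\deg(F_{s},B_{R},0)$ is well-defined and independent of $s$ by homotopy invariance. At $s=0$ the degree equals $1$ (a translation with $\widetilde V\in B_{R}$), hence $\deg(F_{1},B_{R},0)=1$ and $F_{1}$ has a zero in $B_{R}$, yielding the required fixed point.

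The step I expect to demand the most care is justifying the sign inequalities $D^{+}_{\vec m_{j,k}}\widetilde U_{\vec i^{*}}\le 0$ and $D^{-}_{\vec m_{j,k}}\widetilde U_{\vec i^{*}}\ge 0$ at an interior maximiser when the stencil reaches a boundary index in $\partial\mathcal N$. For the constant extrapolation \eqref{boundary1} this is immediate, since the extrapolated boundary value is a copy of an interior value and is therefore automatically dominated by $\widetilde U_{\vec i^{*}}$. For the linear extrapolation \eqref{boundary}, however, the formula $\widetilde U_{\mathrm{bdry}}=2\widetilde U_{\vec j_{1}}-\widetilde U_{\vec j_{2}}$ can in principle produce values larger than the interior maximum, so the monotonicity argument must be adapted -- either by choosing the maximiser on the extended grid and then tracing back to an interior index via the affine structure of the extrapolation, or by exploiting that at a candidate interior maximiser the two reference points $\vec j_{1},\vec j_{2}$ used by \eqref{boundary} are themselves dominated by $\widetilde U_{\vec i^{*}}$ in the relevant direction. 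Once this boundary-compatibility of the maximum principle is established, the rest of the proof is standard.
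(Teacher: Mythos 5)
Your proposal is correct and rests on the same two pillars as the paper's proof: a discrete maximum principle driven by the monotonicity of $\widetilde{\mathcal G}$ for the a priori bound and uniqueness, and a topological fixed-point argument for existence. The paper invokes Leray--Schauder directly, showing $\mathcal T(\tau,\widetilde V)\widetilde U\neq\lambda\widetilde U$ for $\lambda>1$ on $\partial B_r$ via the same sign analysis you use at the max/min, whereas you run a Brouwer-degree continuation in $s\in[0,1]$; these are interchangeable packagings of the same idea. Your uniqueness argument is actually cleaner than the paper's: you work directly at the interior maximizer of $\widetilde U^1-\widetilde U^2$, while the paper perturbs by $2\delta\zeta$ with $\zeta\in\mathcal C^2_0$ and passes to the limit $\delta\to0$, a regularization that is unnecessary on the finite set $\mathcal N^\circ$ (it is also used there to obtain the stronger contraction estimate \eqref{roman1}, but still not strictly needed). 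Finally, the boundary-compatibility concern you raise about linear extrapolation \eqref{boundary} overshooting the interior maximum is genuine, and worth noting: the paper's argument asserts $\widetilde U_{\vec i}\leq\widetilde U_{\vec i_0}$ for $\vec i\in\mathcal N^\circ$ and then immediately reads off sign conditions on $D^\pm_{\vec m_{j,k}}\widetilde U_{\vec i_0}$, which presupposes that any stencil neighbour in $\partial\mathcal N$ is also dominated; this is automatic for constant extrapolation \eqref{boundary1} but requires the extra argument you sketch for \eqref{boundary}, a point the paper does not make explicit.
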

\begin{proof}
\textit{Existence.} 
 Let $r:=\max_{\vec i\in\mathcal N^{\circ}}|\tau\widetilde{\mathcal G}({s_{\vec i}},0)+\tau\widetilde{\mathcal F}(s_{\vec i})-\widetilde V_{\vec i}|.$ The operator $\mathcal T(\tau,\widetilde V)$ is continuous from $B_r:=\{\|\widetilde U\|_{\infty}\leq r\}$ to $\mathbb R^{|\mathcal N^{\circ}|}$ due to that $\mathcal G$ is locally Lipschitz continuous. 
When $\widetilde U\in\partial B_r,$ i.e., $\|\widetilde U\|_{\infty}=r>0,$ we have $\max_{\vec i\in\mathcal N^{\circ}}|\widetilde U_{\vec i}|=r>0.$ Hence, there exists $\vec i_0\in\mathcal N^{\circ}$ such that $\widetilde U_{\vec i_0}=r$ or $\widetilde U_{\vec i_0}=-r.$ 

\textit{Case 1: $\widetilde U_{\vec i_0}=r.$} In this case, we have $\widetilde U_{\vec i}\leq \widetilde U_{\vec i_0}$ for each $\vec i\in\mathcal N^{\circ}.$ 
By Assumption \ref{ass_G}, we derive that $\widetilde{\mathcal G}({s_{\vec i_0}},[D^{+}\widetilde U]_{\vec i_0},[D^{-}\widetilde U]_{\vec i_0})\ge \widetilde{\mathcal G}({s_{\vec i_0}},0,0).$ Hence, we obtain
\begin{align*}
(\mathcal T(\tau,\widetilde V)\widetilde U)_{\vec i_0}\leq \widetilde V_{\vec i_0}-\tau\widetilde {\mathcal F}(s_{\vec i_0})-\tau\widetilde{\mathcal G}({s_{\vec i_0}},0,0)\leq r=\widetilde U_{\vec i_0},
\end{align*} 
which leads to that $(\mathcal T(\tau,\widetilde V)\widetilde U)_{\vec i_0}\neq \lambda \widetilde U_{\vec i_0}$ for any $\lambda>1.$

\textit{Case 2: $\widetilde U_{\vec i_0}=-r.$} In this case, we have $\widetilde U_{\vec i}\ge \widetilde U_{\vec i_0}$ for any $\vec i\in\mathcal N^{\circ}.$ By virtue of the monotonicity of $\mathcal G,$ we can similarly derive that $\widetilde{\mathcal G}({s_{\vec i_0}},[D^{\pm}\widetilde U]_{\vec i_0})\leq \widetilde{\mathcal G}({s_{\vec i_0}},0),$ which yields  
\begin{align*}
(\mathcal T(\tau,\widetilde V)\widetilde U)_{\vec i_0}\ge \widetilde V_{\vec i_0}-\tau\widetilde{\mathcal G}({s_{\vec i_0}},0)-\tau\widetilde{\mathcal F}(s_{\vec i_0})\ge -r=\widetilde U_{\vec i_0}.
\end{align*}
This implies that $
(\mathcal T(\tau,\widetilde V)\widetilde U)_{\vec i_0}\neq \lambda\widetilde U_{\vec i_0}$ for any $\lambda>1.$ 

Combining \textit{Cases 1--2} gives that $\mathcal T(\tau,\widetilde V)\widetilde U\neq\lambda\widetilde U$ for any $\lambda>1,$ when $\widetilde U\in\partial B_r.$
According to the Leray--Schauder fixed point theorem (see e.g. \cite[Lemma 1]{Meanfield}), we obtain that there exists a fixed point $\widetilde U$ in $B_r,$ i.e., $\|\widetilde U\|_{\infty}\leq r.$ 

\textit{Uniqueness.}  Let $U$ and $\bar U$ satisfy, respectively,
\begin{align}\label{barU}
U_{\vec i}+\tau \widetilde{\mathcal G}({s_{\vec i}},[D^{\pm}U]_{\vec i})+\tau\widetilde{\mathcal F}_{\vec i}=W_{\vec i},\quad \bar U_{\vec i}+\tau \widetilde{\mathcal G}({s_{\vec i}},[D^{\pm}\bar U]_{\vec i})+\tau\widetilde{\mathcal F}_{\vec i}=\bar W_{\vec i}.
\end{align}
We aim to show that $0<\sup\limits_{\vec i\in\mathcal N^{\circ}}(U_{\vec i}-\bar U_{\vec i})^+\leq \|W-\bar W\|_{\infty}.$ Define $\Gamma_{\vec i}:=(U_{\vec i}-\bar U_{\vec i})^+$ for $\vec i\in\mathcal N^{\circ}.$ By \eqref{barU}, when $W\neq\bar W,$ we have $U\neq \bar U,$ due to that $U= \bar U$ implies $W=\bar W$. Hence we assume that $\|\Gamma\|_{\infty}=\sup_{\vec i\in\mathcal N^{\circ}}\Gamma_{\vec i}>0$ without loss of generality. Since $\Gamma$ is bounded, for any $\delta>0,$ there exists $\vec i_1\in\mathcal N^{\circ}$ such that $\Gamma_{\vec i_1}>\|\Gamma\|_{\infty}-\delta$.  Take $\zeta\in\mathcal C_0^2(\widetilde{\mathcal P}_{\epsilon}(G))$ such that $0\leq \zeta\leq 1,\;\|D\zeta\|_{L^{\infty}}\leq 1,\;\zeta(s_{\vec i_1})=1$ and define $\widehat \Gamma:=\Gamma+2\delta \zeta.$ Then we have $\widehat \Gamma_{\vec i_1}=\Gamma_{\vec i_1}+2\delta>\|\Gamma\|_{\infty}+\delta.$ Besides, there is an index $\vec i_0\in\mathcal N^{\circ}$ such that \begin{align}\label{maxPhi}\widehat\Gamma_{\vec i_0}\ge \widehat\Gamma_{\vec i}\quad \forall \,\vec i\in\mathcal N^{\circ}.
\end{align} This leads to that $\widehat\Gamma_{\vec i_0}\ge \widehat\Gamma_{\vec i_1}\ge \|\Gamma\|_{\infty}+\delta$. Hence, we have 
\begin{align*}
&\|\Gamma\|_{\infty}\leq \widehat{\Gamma}_{\vec i_0}-\delta\leq \Gamma_{\vec i_0}+\delta=U_{\vec i_0}-\bar {U}_{\vec i_0}+\delta. 
\end{align*}
By the definition of $\Gamma$ and expressions in \eqref{barU}, we derive that 
\begin{align}\label{supdiffU}
\sup_{\vec i\in\mathcal N^{\circ}}(U_{\vec i}-\bar U_{\vec i})^+\leq \|W_{\vec i}-\bar W_{\vec i}\|_{\infty}+\tau\Big(\widetilde{\mathcal G}({s_{\vec i_0}},[D^{\pm}\bar U]_{\vec i_0})-\widetilde{\mathcal G}({s_{\vec i_0}},[D^{\pm}U]_{\vec i_0})\Big)+\delta.
\end{align} 
It follows from \eqref{maxPhi} that $U_{\vec i}-\bar U_{\vec i}+2\delta\zeta(s_{\vec i})\leq (U_{\vec i}-\bar U_{\vec i})^++2\delta\zeta(s_{\vec i})\leq U_{\vec i_0}-\bar U_{\vec i_0}+2\delta\zeta(s_{\vec i_0})$ for any $\vec i\in\mathcal N^{\circ},$ and thus it holds that 
\begin{align*}
&\quad D^+_{\vec m_{j,k}}U_{\vec i_0}=U_{\vec i_0+\vec m_{j,k}}-U_{\vec i_0}\leq \bar U_{\vec i_0+\vec m_{j,k}}-\bar U_{\vec i_0}+2\delta(\zeta(s_{\vec i_0})-\zeta(s_{\vec i_0}+h\vec m_{j,k})),\\
&\quad D^-_{\vec m_{j,k}}U_{\vec i_0}=U_{\vec i_0}-U_{\vec i_0-\vec m_{j,k}}\ge \bar U_{\vec i_0}-\bar U_{\vec i_0-\vec m_{j,k}}+2\delta(\zeta(s_{\vec i_0}-h\vec m_{j,k})-\zeta(s_{\vec i_0})).
\end{align*}
This, together with the monotonicity of $\mathcal G$ in Assumption \ref{ass_G} (\romannumeral1) gives that 
\begin{align*}
\tau\widetilde{\mathcal G}({s_{\vec i_0}},[D^{\pm}\bar U]_{\vec i_0})&\leq \tau\widetilde{\mathcal G}\Big({s_{\vec i_0}},\frac {\sqrt{\omega_{j,k}}}{h}\Big( D^+_{\vec m_{j,k}}U_{\vec i_0} -2\delta \big(\zeta(s_{\vec i_0})-\zeta(s_{\vec i_0}+h\vec m_{j,k})\big)\Big),\\
&\quad \frac{\sqrt{\omega_{j,k}}}{h}\Big(D^-_{\vec m_{j,k}}U_{\vec i_0}-2\delta \big(\zeta(s_{\vec i_0}-h\vec m_{j,k})-\zeta(s_{\vec i_0})\big)\Big)\Big). 
\end{align*}
Letting $\delta\to0,$ and taking \eqref{supdiffU} into consideration, we obtain $\sup_{\vec i\in\mathcal N^{\circ}}(U_{\vec i}-\bar U_{\vec i})^+\leq \|W_{\vec i}-\bar W_{\vec i}\|_{\infty}.$ Similarly, one can derive $\sup_{\vec i\in\mathcal N^{\circ}}(U_{\vec i}-\bar U_{\vec i})^-\leq \|W_{\vec i}-\bar W_{\vec i}\|_{\infty},$ which implies 
\begin{align}\label{roman1}
\|U_{\vec i}-\bar U_{\vec i}\|_{\infty}\leq \|W_{\vec i}-\bar W_{\vec i}\|_{\infty}.
\end{align}  This completes the proof. 
\end{proof}
Define the operator $\widetilde {\mathcal T}: \mathbb R^{|\mathcal N^{\circ}|}\to \mathbb R^{|\mathcal N^{\circ}|}, W\mapsto U$ by $ U=\mathcal T(\tau, W) U,$ which represents the onestep mapping for the implicit method. 
Recall that $\mathcal M_{0}$ is given by \eqref{setM}. 
\begin{lemma}\label{lem3}
Let Assumptions \ref{assumption_g}--\ref{ass_G} hold.  Then the following properties hold. 
\begin{itemize} 
\item[(\romannumeral1)] $\widetilde {\mathcal T}(\mathcal M_{0})\subset \mathcal M_{0}$;
\item[(\romannumeral2)] For $n,j\ge 0$ and $W\in\mathcal M_{0},$ we have $\|\widetilde {\mathcal T}^{(n+j)}(W)-\widetilde {\mathcal T}^{(n)}(W)\|_{\infty}\leq Kj\tau$, where $K$ is given in Lemma \ref{lem1} (\romannumeral4).
\end{itemize} 
\end{lemma}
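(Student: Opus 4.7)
The plan is to mirror the strategy used for Lemma \ref{lem1} (\romannumeral3)--(\romannumeral4) on the explicit scheme, but replacing the $L^\infty$-contractivity of $\vec{\mathcal G}$ from Lemma \ref{lem1} (\romannumeral2) with the contraction estimate \eqref{roman1} proved inside Lemma \ref{lem2}. No CFL-type condition should be needed, because \eqref{roman1} is already built into the well-posedness of the implicit equation.

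For (\romannumeral1), I would fix $W \in \mathcal M_0$, set $U := \widetilde{\mathcal T}(W)$, and for each admissible multi-index $\vec m_{j,k}$ consider the shifted data $\bar W := \tau_{\vec m_{j,k}} W$ and the shifted candidate $\bar U := \tau_{\vec m_{j,k}} U$. Applying the shift to \eqref{tildeT} on the interior identifies $\bar U$ as the unique fixed point of $\mathcal T(\tau, \bar W)$, hence $\bar U = \widetilde{\mathcal T}(\bar W)$ by the uniqueness half of Lemma \ref{lem2}. Invoking \eqref{roman1} then gives
\begin{align*}
\|D^+_{\vec m_{j,k}} U\|_\infty = \|\tau_{\vec m_{j,k}} U - U\|_\infty = \|\widetilde{\mathcal T}(\bar W) - \widetilde{\mathcal T}(W)\|_\infty \le \|\bar W - W\|_\infty = \|D^+_{\vec m_{j,k}} W\|_\infty \le Rh,
\end{align*}
and the backward difference is treated identically. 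For indices where $\vec i + \vec m_{j,k}$ lies on $\partial\mathcal N$, the bound will be enforced through the extrapolations \eqref{boundary1} or \eqref{boundary}, as in the closing lines of the proof of Lemma \ref{lem1} (\romannumeral3).

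For (\romannumeral2), the contraction \eqref{roman1} immediately gives non-expansiveness $\|\widetilde{\mathcal T}(V_1) - \widetilde{\mathcal T}(V_2)\|_\infty \le \|V_1 - V_2\|_\infty$. Iteration yields $\|\widetilde{\mathcal T}^{(n+j)}(W) - \widetilde{\mathcal T}^{(n)}(W)\|_\infty \le \|\widetilde{\mathcal T}^{(j)}(W) - W\|_\infty$, and a telescoping sum reduces this further to $j\|\widetilde{\mathcal T}(W) - W\|_\infty$. The remaining task is to bound a single step: since part (\romannumeral1) delivers $U := \widetilde{\mathcal T}(W) \in \mathcal M_0$, the entries of $[D^{\pm} U_{\vec i}]$ are uniformly controlled, and by Assumption \ref{ass_G} (\romannumeral3) together with the consistency identity $\mathcal G(\xi, 0, 0) = \mathcal H(\xi, 0) - \mathcal O_\xi(0) = 0$ (Assumption \ref{ass_H} (H-\romannumeral4) and the definition of $\mathcal O_\xi$), the term $|\widetilde{\mathcal G}(s_{\vec i}, [D^+ U_{\vec i}], [D^- U_{\vec i}])|$ is bounded by the supremum appearing in the definition of $K$ in Lemma \ref{lem1} (\romannumeral4). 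Combined with \eqref{tildeT} and $\|\widetilde{\mathcal F}\|_\infty \le \|\mathcal F\|_{L^\infty(\mathcal P(G))}$, this will give $\|\widetilde{\mathcal T}(W) - W\|_\infty \le K\tau$.

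The main obstacle I anticipate is the shift-commutativity step in part (\romannumeral1). On strictly interior cells this is immediate from \eqref{tildeT}, but when $\vec i \pm \vec m_{j,k}$ is adjacent to $\partial\mathcal N$, the difference operators $[D^{\pm}\bar U_{\vec i}]$ invoke values defined by extrapolation, and these must be shown to agree with the correspondingly extrapolated values of the shifted problem. This is the same boundary bookkeeping performed (tacitly) inside the proof of Lemma \ref{lem1} (\romannumeral3), and it carries over here because the extrapolations \eqref{boundary1}--\eqref{boundary} are local geometric constructions on $\widetilde{\mathcal P}^h(G)$ that respect $\tau_{\vec m_{j,k}}$. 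Once this is handled, the rest of the argument is a direct rerun of the explicit case with \eqref{roman1} standing in for Lemma \ref{lem1} (\romannumeral2).
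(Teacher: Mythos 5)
Your proposal is correct and follows essentially the same route the paper takes: part (\romannumeral1) commutes the shift $\tau_{\vec m_{j,k}}$ with $\widetilde{\mathcal T}$ via the uniqueness of the implicit fixed point and then invokes the contraction \eqref{roman1} from Lemma \ref{lem2}, with boundary cells handled through the extrapolations \eqref{boundary1}--\eqref{boundary}; part (\romannumeral2) reruns the telescoping argument of Lemma \ref{lem1}~(\romannumeral4) with \eqref{roman1} replacing Lemma \ref{lem1}~(\romannumeral2). You are actually more explicit than the paper at the shift-commutativity step (where the paper silently asserts $\tau_{\vec m_{j,k}}\widetilde{\mathcal T}\widetilde V = \widetilde{\mathcal T}\tau_{\vec m_{j,k}}\widetilde V$), and the appeal to $\mathcal G(\xi,0,0)=0$ in part (\romannumeral2) is harmless but unnecessary since $K$ is already defined as a supremum over $\|P\|_\infty,\|Q\|_\infty\le R$.
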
 
\begin{proof} 
(\romannumeral1)  
Recall that $\tau_{\vec m_{j,k}}$ is the transition mapping defined in Lemma \ref{lem1}. When $\tau_{\vec m_{j,k}}\widetilde {\mathcal T}\widetilde V\in \widetilde{\mathcal P}^h(G)\backslash \partial\widetilde{ \mathcal P}(G) ,$  we see that 
$ \|D^+_{\vec m_{j,k}}\widetilde {\mathcal T}\widetilde V\|_{\infty}=\|\tau_{\vec m_{j,k}}\widetilde {\mathcal T}\widetilde V-\widetilde {\mathcal T}\widetilde V\|_{\infty}
=\|\widetilde {\mathcal T} \tau_{\vec m_{j,k}} \widetilde V-\widetilde {\mathcal T}\widetilde V\|_{\infty}\leq \|D^+_{\vec m_{j,k}}\widetilde V\|_{\infty}\leq Rh, $
where we use \eqref{roman1}. When $\tau_{\vec m_{j,k}}\widetilde {\mathcal T}\widetilde V\in \widetilde{\mathcal P}^h(G)\cap \partial\widetilde{ \mathcal P}(G),$ by \eqref{boundary1} or \eqref{boundary}, we have that 
$\|D^+_{\vec m_{j,k}}\widetilde {\mathcal T}\widetilde V\|_{\infty}\leq Rh$ still holds.  
The proof of (\romannumeral2) is similar to that of Lemma \ref{lem1} (\romannumeral4) by considering $\widetilde {\mathcal T}$ instead of $\vec{\mathcal G}$. 
\end{proof}

Below we show the existence, uniqueness and boundedness of the numerical solution for the implicit method.

\begin{proposition}\label{prop_EU}
Let Assumptions \ref{assumption_g}--\ref{ass_G} hold. Then there exists a unique numerical solution $\{\widetilde U^n\}_{0\leq n\leq N_T}$ for the implicit method  \eqref{implicit}. 
Moreover, the numerical solution satisfies that 
$\|\widetilde U^n\|_{\infty}\leq C(t_n+1)$ with some $C>0.$ 
\end{proposition}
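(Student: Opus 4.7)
The plan is to read Proposition \ref{prop_EU} as an iterative consequence of Lemma \ref{lem2}: at each time level the implicit update prescribes $\widetilde U^{n+1}$ as a fixed point of the map $\mathcal T(\tau,\widetilde U^n)$ defined in \eqref{tildeT}, so existence and uniqueness at level $n+1$ drop out of Lemma \ref{lem2} once $\widetilde U^n$ is already constructed. Starting from $\widetilde U^0:=\widetilde{\mathcal U}_0$, which is a bounded element of $\mathbb R^{|\mathcal N^{\circ}|}$, I would therefore induct on $n$: given $\widetilde U^n\in L^{\infty}(\mathbb R^{|\mathcal N^{\circ}|})$, Lemma \ref{lem2} with $\widetilde V=\widetilde U^n$ produces a unique $\widetilde U^{n+1}\in L^{\infty}(\mathbb R^{|\mathcal N^{\circ}|})$ satisfying \eqref{implicit}. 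Iterating up to $n=N_T$ yields the sequence $\{\widetilde U^n\}_{0\leq n\leq N_T}$ claimed by the proposition, and uniqueness is preserved at every step.

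For the quantitative bound $\|\widetilde U^n\|_{\infty}\leq C(t_n+1)$, I would combine the explicit estimate in Lemma \ref{lem2} with the structural fact that $\widetilde{\mathcal G}(\cdot,0,0)\equiv 0$. This vanishing follows from consistency in Assumption \ref{ass_G} (\romannumeral2) together with $\mathcal H(\xi,0)=0$ from Assumption \ref{ass_H} (H-\romannumeral4) and the obvious $\mathcal O_{\xi}(0)=0$, giving $\mathcal G(\xi,0,0)=\mathcal H(\xi,0)-\mathcal O_{\xi}(0)=0$ for every $\xi\in\mathcal P^{\circ}(G)$. Plugging this into the bound of Lemma \ref{lem2} with $\widetilde V=\widetilde U^n$ yields
\begin{equation*}
\|\widetilde U^{n+1}\|_{\infty}\leq \|\tau\widetilde{\mathcal G}(\cdot,0,0)+\tau\widetilde{\mathcal F}-\widetilde U^n\|_{\infty}=\|\tau\widetilde{\mathcal F}-\widetilde U^n\|_{\infty}\leq \tau\|\mathcal F\|_{L^{\infty}(\mathcal P(G))}+\|\widetilde U^n\|_{\infty},
\end{equation*}
which telescopes to $\|\widetilde U^n\|_{\infty}\leq \|\widetilde{\mathcal U}_0\|_{\infty}+n\tau\|\mathcal F\|_{L^{\infty}(\mathcal P(G))}=\|\mathcal U_0\|_{L^{\infty}(\mathcal P(G))}+t_n\|\mathcal F\|_{L^{\infty}(\mathcal P(G))}$, and hence to the desired bound with $C:=\max\{\|\mathcal U_0\|_{L^{\infty}(\mathcal P(G))},\|\mathcal F\|_{L^{\infty}(\mathcal P(G))}\}$. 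Finally, transferring back through the bijection $\Pi$ produces the same conclusion for the numerical solution $U^n$ of \eqref{implicit1}.

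There is essentially no serious obstacle here: the analytic work has already been absorbed into Lemma \ref{lem2} (Leray--Schauder fixed point and the comparison argument giving uniqueness) and into Assumptions \ref{ass_H} (H-\romannumeral4) and \ref{ass_G} (\romannumeral2) which collapse $\widetilde{\mathcal G}(\cdot,0,0)$ to zero. The only thing to be careful about is to state clearly that each application of Lemma \ref{lem2} is well-posed because $\widetilde U^n$ is already $L^{\infty}$-bounded by the induction hypothesis, and that the extrapolation rules \eqref{boundary1}--\eqref{boundary} at boundary indices do not interfere with this iterative construction since they only relate boundary values of $\widetilde U^{n+1}$ to its already-determined interior values at the same time level.
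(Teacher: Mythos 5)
Your proof is correct and follows essentially the same route as the paper: existence and uniqueness from Lemma~\ref{lem2} applied inductively level by level, and the boundedness from the fixed-point estimate of Lemma~\ref{lem2} combined with consistency of $\mathcal G$. The only cosmetic difference is that you telescope $\|\widetilde U^{n+1}\|_\infty\le\tau\|\mathcal F\|_\infty+\|\widetilde U^n\|_\infty$ directly after noting $\widetilde{\mathcal G}(\cdot,0,0)=0$, whereas the paper first bounds $\|\widetilde{\mathcal T}^{(n)}(0)\|_\infty$ and then invokes the contractivity $\|\widetilde{\mathcal T}(V_1)-\widetilde{\mathcal T}(V_2)\|_\infty\le\|V_1-V_2\|_\infty$ from \eqref{roman1} to account for the initial datum; both give the same $C(t_n+1)$ bound.
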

\begin{proof}
The existence and uniqueness can be derived from Lemma \ref{lem2}. 
Recalling the definition of the operator $\widetilde T,$ by Lemma \ref{lem2}, we have $
\|\widetilde T \widetilde V\|_{\infty}\leq \|\tau\widetilde{\mathcal G}(\cdot,0,0)+\tau\widetilde{\mathcal F}-\widetilde V\|_{\infty},
$  which yields that $
\|\widetilde T(0)\|_{\infty}\leq \tau\|\widetilde{\mathcal G}(\cdot,0,0)+\widetilde{\mathcal F}\|_{\infty}.
$
Iterating twice gives 
\begin{align*}
\|\widetilde T^{(2)}(0)\|_{\infty}\leq \tau\|\widetilde{\mathcal G}(\cdot,0,0)+\widetilde{\mathcal F}\|_{\infty}+\|\widetilde T(0)\|_{\infty}\leq2\tau  \|\widetilde{\mathcal G}(\cdot,0,0)+\widetilde{\mathcal F}\|_{\infty}. 
\end{align*}
By iteration, we obtain
$\|\widetilde T^{(n)}(0)\|_{\infty}\leq n\tau\|\widetilde{\mathcal G}(\cdot,0,0)+\widetilde{\mathcal F}\|_{\infty}.$ 
It follows from Lemma \ref{lem3} that 
\begin{align*}
\|\widetilde T^{(n)}(\widetilde U^0)\|_{\infty}&\leq \|\widetilde T^{(n)}(\widetilde U^0)-\widetilde T^{(n)}(0)\|_{\infty}+\|\widetilde T^{(n)}(0) \|_{\infty}\leq \|\widetilde U^0\|_{\infty}+
Ct_n.
\end{align*}
The proof is finished. 
\end{proof}

\begin{proof}[Proof of Theorem \ref{thm_implicit}]
The proof is similar to that of Theorem \ref{thm1}, the main difference lies in the estimate of \eqref{U_eq1}. In the right-hand side of \eqref{U_eq1}, the arguments in $\mathcal G$ are replaced by the differences of numerical solution in the $n_0$th time step for the implicit method. Thus we omit further details. 
\end{proof}

\section{Some definitions}\label{sec_app}
We introduce the definition of the viscosity solution for HJE \eqref{HJeq} on the Wasserstein space on graphs; see \cite{MCC}. Define  $\mathrm{USC}([0,T)\times\mathcal P^{\circ}(G))$ (resp. $\mathrm{LSC}([0,T)\times\mathcal P^{\circ}(G))$)  the space of all upper (resp. lower) semi-continuous functions on $[0,T)\times\mathcal P^{\circ}(G).$ Denote by $\mathcal C^1((0,T)\times \mathcal P^{\circ}(G),l^2)$ the set of real-valued functions $\varphi:(t,\xi)\mapsto \mathbb \varphi(t,\xi)$ on $(0,T)\times \mathcal P^{\circ}(G)$ which have a continuous Fr\'echet derivative w.r.t. $\xi\in\mathcal P^{\circ}(G)$ and a continuous derivative w.r.t.  $t\in(0,T)$.

\begin{definition}\label{def_viscosity} \cite[Definition 4.2]{MCC}
(\romannumeral1) A function $u\in\mathrm{USC}([0,T)\times\mathcal P^{\circ}(G))$ is a viscosity subsolution to \eqref{HJeq} if $u(0,\cdot)\leq \mathcal U_0$ and for every $(t_0,\xi_0)\in(0,T)\times\mathcal P^{\circ}(G)$ and every $\varphi\in\mathcal C^1((0,T)\times\mathcal P^{\circ}(G),l^2)$ such that $u-\varphi$ has a local maximum at $(t_0,\xi_0),$ we have 
\begin{align*}
\partial_t\varphi(t_0,\xi_0)+\mathcal H(\xi_0,\nabla_{\mathcal W}\varphi(t_0,\xi_0))+\mathcal F(\xi_0)\leq \mathcal O_{\xi_0}(\nabla_{\mathcal W}\varphi(t_0,\xi_0)).
\end{align*}

(\romannumeral2) A function $u\in\mathrm{LSC}([0,T)\times\mathcal P^{\circ}(G))$ is a viscosity supersolution to \eqref{HJeq} if $u(0,\cdot)\ge  \mathcal U_0$ and for every $(t_0,\xi_0)\in(0,T)\times\mathcal P^{\circ}(G)$ and every $\varphi\in\mathcal C^1((0,T)\times\mathcal P^{\circ}(G),l^2)$ such that $u-\varphi$ has a local minimum at $(t_0,\xi_0),$ we have 
\begin{align*}
\partial_t\varphi(t_0,\xi_0)+\mathcal H(\xi_0,\nabla_{\mathcal W}\varphi(t_0,\xi_0))+\mathcal F(\xi_0)\ge  \mathcal O_{\xi_0}(\nabla_{\mathcal W}\varphi(t_0,\xi_0)).
\end{align*}

(\romannumeral3) A function $u$ is a viscosity solution of \eqref{HJeq} if it is both a viscosity subsolution and a viscosity supersolution.
\end{definition} 

Below we introduce the definition of a monotone method; see e.g. \cite{CrandallLions}.

\begin{definition}\label{def_monotone}
A numerical method  $$ U^{n+1}_{\vec i}=\vec{\mathcal G}\Big( U^n(\xi^{s_{\vec i}}+rhe_{j,k}):r=0,1,\text{ or }-1,\text{ and }1\leq j<k\leq d\Big),\;\vec i\in\mathcal N^{\circ}$$
 is called \textit{monotone} on $[-R,R]$ if the mapping $\vec{\mathcal G}$ is a non-decreasing function of each argument as long as $\frac1h|D^+_{e_{j,k}} U^n_{\vec i}|,\,\frac1h|D^-_{e_{j,k}} U^n_{\vec i}|\leq R$ with some constant $R>0$. 
\end{definition}

\section{More numerical experiments}\label{sec_app2}
\subsection{Meshspaces} 
We plot the meshspaces under different coordinates, i.e., ${\mathcal P}^h_{\epsilon}(G)$ and $\widetilde {\mathcal P}^h_{\epsilon}(G)$ for $d=3$ in Figure \ref{mesh1} (A) and (B), respectively.  
\begin{figure}[h]
\centering

\subfloat[]{
\begin{minipage}[t]{0.4\linewidth}
\centering
\includegraphics[height=4.4cm,width=5.8cm]{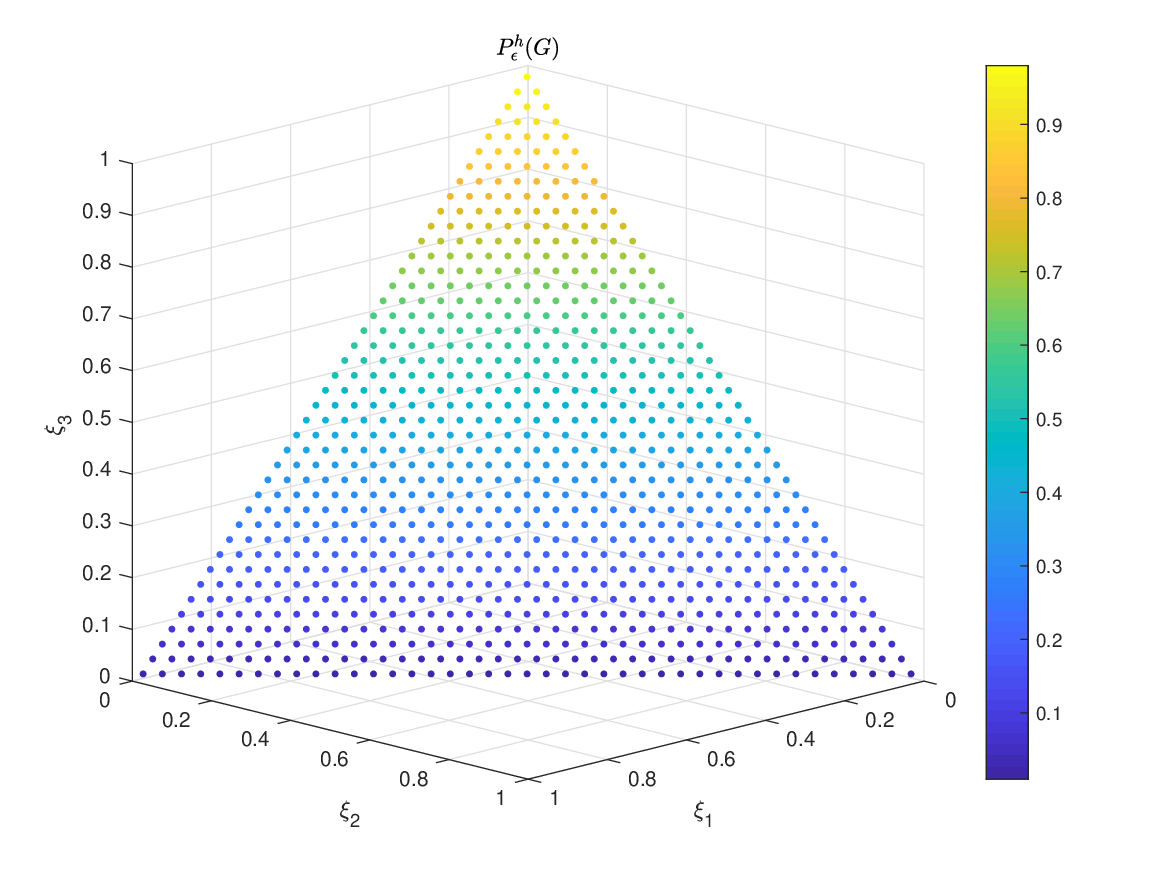}
\end{minipage}
}   \begin{tikzpicture}[overlay]
    \draw[->,  double, double distance=0.3pt] (0.5, 2.6) -- (1.5, 2.6); 
    \node at (1, 2.9) {$\Pi$}; 
\end{tikzpicture}
    \hspace{1.3cm} 
\subfloat[]{
\begin{minipage}[t]{0.4\linewidth}
\centering
\includegraphics[height=4.4cm,width=5.8cm]{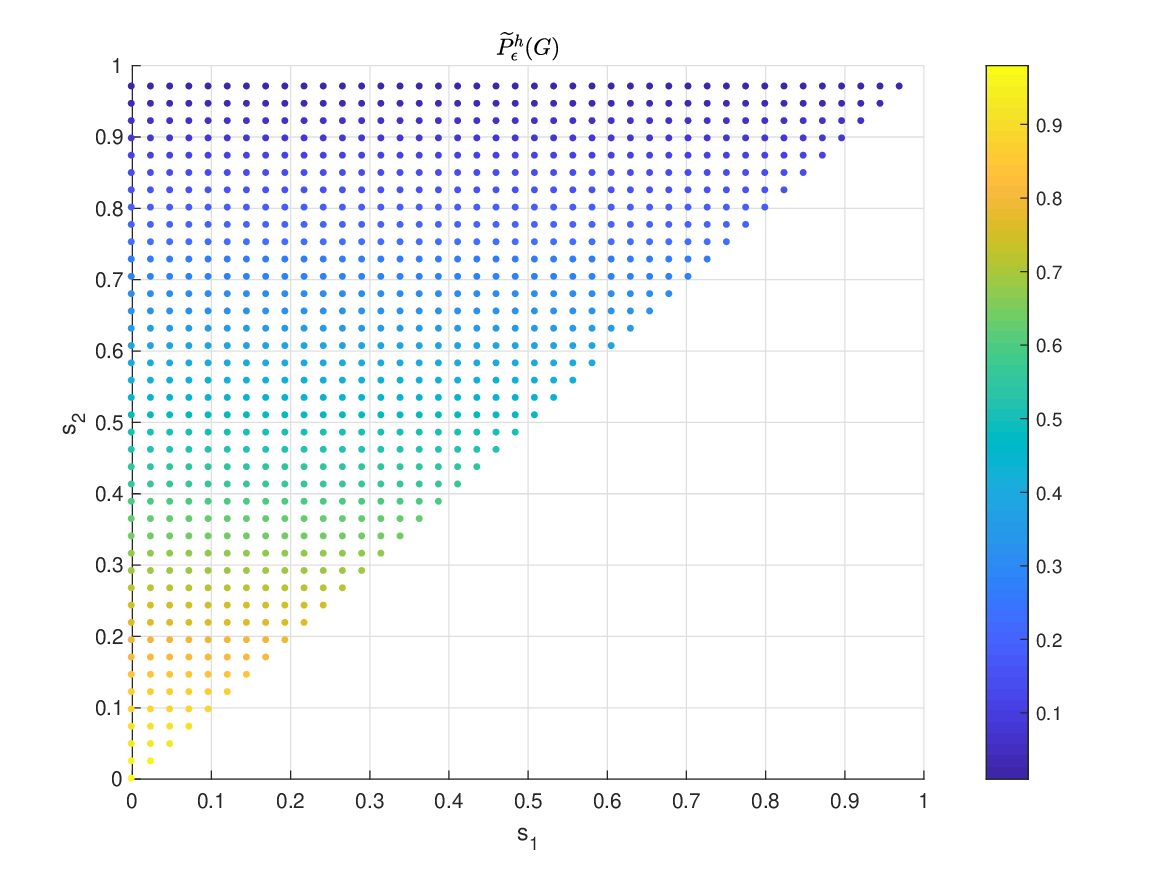}
\end{minipage}
}  

\centering
\caption{(A): ${\mathcal P}^h_{\epsilon}(G)$ and (B): $\widetilde{\mathcal P}_{\epsilon}^h(G)$,\,$\epsilon=0.01,h=0.025.$}
\label{mesh1}
\end{figure}

\subsection{Discussion on the boundary condition} 
In this part, we employ the monotone method \eqref{explicit1} 
to discretize \eqref{lap_eq}.    
We consider initial value $\mathcal U_0(\xi)=\min\{\xi_1,\xi_2,\xi_3\}\cos(\|\xi\|^2_{l^2}),$ which vanishes at the boundary.  
Recall the function $\mathcal I_{\theta}(\xi)=\sum_{i=1}^3\xi_i^{-\theta}$ given in Section \ref{sec_6}. 
 In Figure \ref{pic2}, we plot the numerical solution for the Hamiltonian $\mathcal H(\xi,p)=\mathcal I^{-2}_{0.1}(\xi)\|p\|^2_{\xi}$ subject to two kinds of boundary conditions:
Zero Dirichlet boundary condition 
in (A), and  
constant extrapolation boundary condition in 
(B).  
As seen in subfigure (A), the numerical solution is poorly simulated at the boundaries due to the formation of boundary layers, thus indicating that the zero Dirichlet boundary condition is not suitable for the considered equation \eqref{lap_eq} when $\lambda_1\neq0$. By contrast, subfigure (B) demonstrates that the constant extrapolation method effectively eliminates boundary layers and produces a well-behaved numerical solution. The solution with the constant extrapolation method deviates from the zero boundary and concentrates in the interior region, consistent with theoretical predictions (see Section \ref{sec_3}). 

\begin{figure}[h]
\centering

\subfloat[$\lambda_1=0.1,T=1$, Dirichlet]{
\begin{minipage}[t]{0.35\linewidth}
\centering
\includegraphics[height=4.4cm,width=5.8cm]{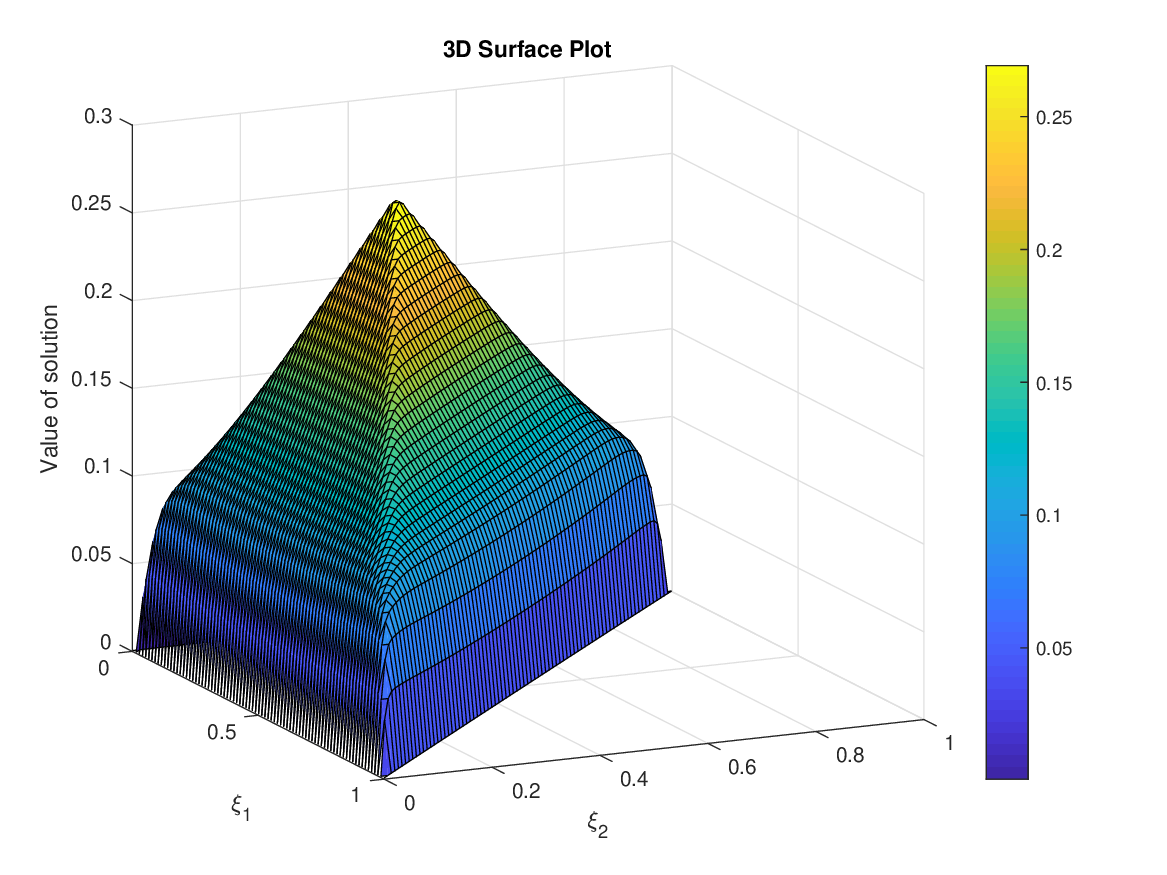}
\end{minipage}
}   
\subfloat[$\lambda_1=0.1,T=1$, Extrapolation]{
\begin{minipage}[t]{0.35\linewidth}
\centering
\includegraphics[height=4.4cm,width=5.8cm]{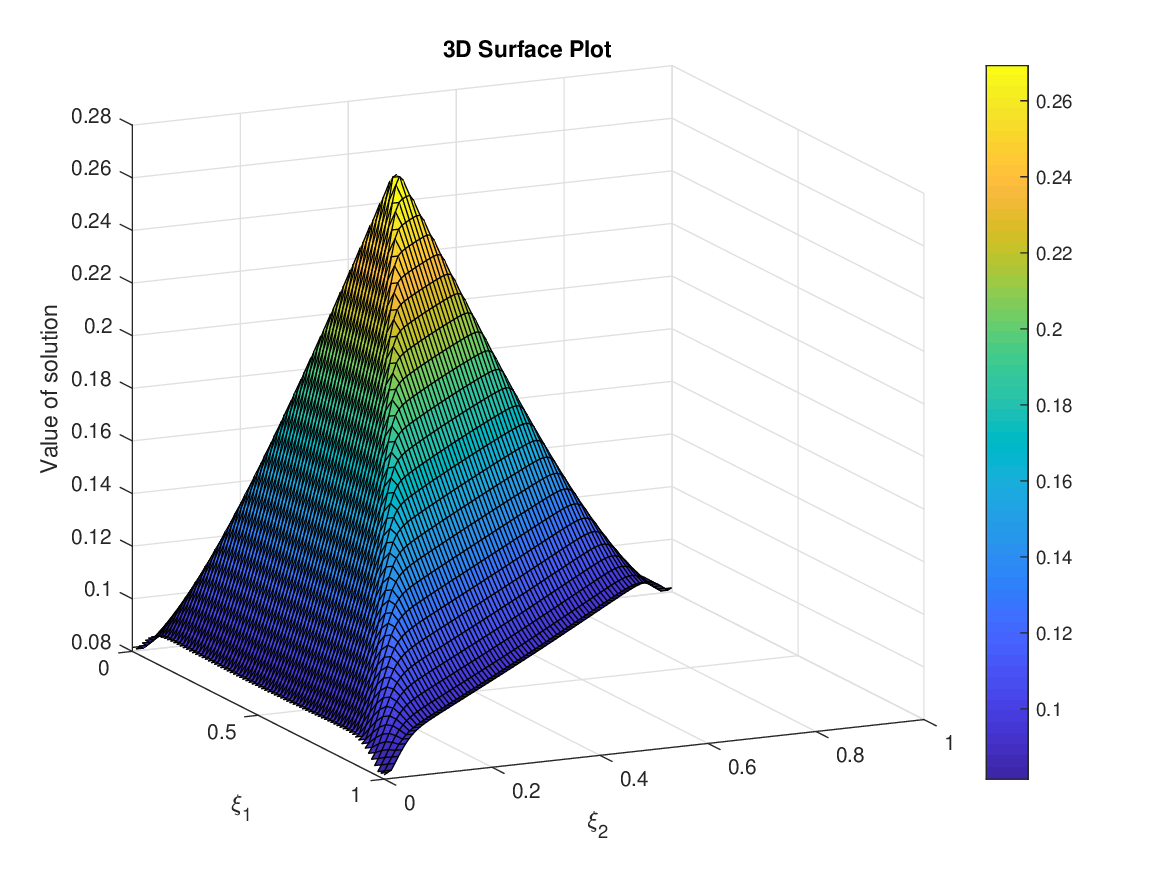}
\end{minipage}
} 
\centering
\caption{Numerical solution for $\mathcal H(\xi,p)=\mathcal I_{0.1}^{-2}(\xi)\|p\|_{\xi}^{2},\; \mathcal U_0(\xi)=\min\{\xi_1,\xi_2,\xi_3\}\cos(\|\xi\|^2_{l^2}), \tau/h=0.1,h=0.1\times 2^{-3},\epsilon=0.001$.} \label{pic2}
\end{figure}

\subsection{Effects of graph individual noise and Hamiltonian in solution dynamics}
We first investigate the impact of the graph individual noise term $\lambda_1\Delta_{\mathrm{ind}} u$ in \eqref{lap_eq} with the initial value $\mathcal U_0(\xi)=-\min\{\xi_1,\xi_2,\xi_3\}\cos(\sum_{i=1}^3\xi^2_i)$ and a fixed Hamiltonian $\mathcal H(\xi,p)=\mathcal I_1^{-2}(\xi)\|p\|^2_{\xi}.$
From Figure \ref{pic4} (B)(C)(D), we observe that increasing the noise intensity $\lambda_1$ induces pronounced deviations of the numerical solutions from the initial state (see subfigure (A)) near the boundaries, meanwhile the global minimum value stays invariant. This potentially indicates that the graph individual noise term has the effect of letting the solution keep away from boundary values and approach the interior values, with amplification proportional to noise intensity 
$\lambda_1$. From the contour plots (E)(F), we see that the graph individual noise term has a regularizing effect on smoothing the solution. The smoothing ability correlates directly with $\lambda_1$, evidenced by the transition from sharp features at small $\lambda_1$ to homogenized profiles at larger values. 

\begin{figure}[h]
\centering

\subfloat[$T=0$]{
\begin{minipage}[t]{0.35\linewidth}
\centering
\includegraphics[height=4.4cm,width=5.8cm]{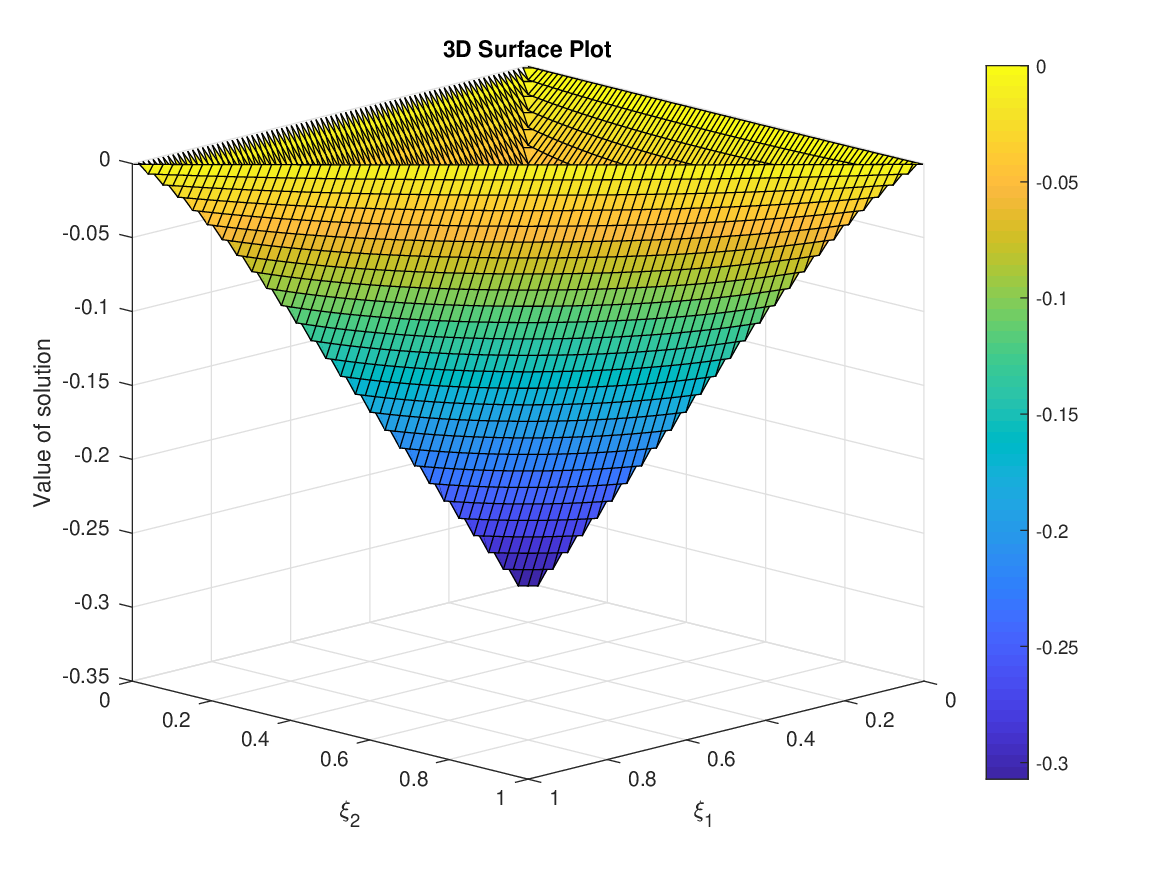}
\end{minipage}
} 
\subfloat[$\mathfrak a=\mathcal I^{-2}_1,\lambda_1=0.01$]{
\begin{minipage}[t]{0.35\linewidth}
\centering
\includegraphics[height=4.4cm,width=5.8cm]{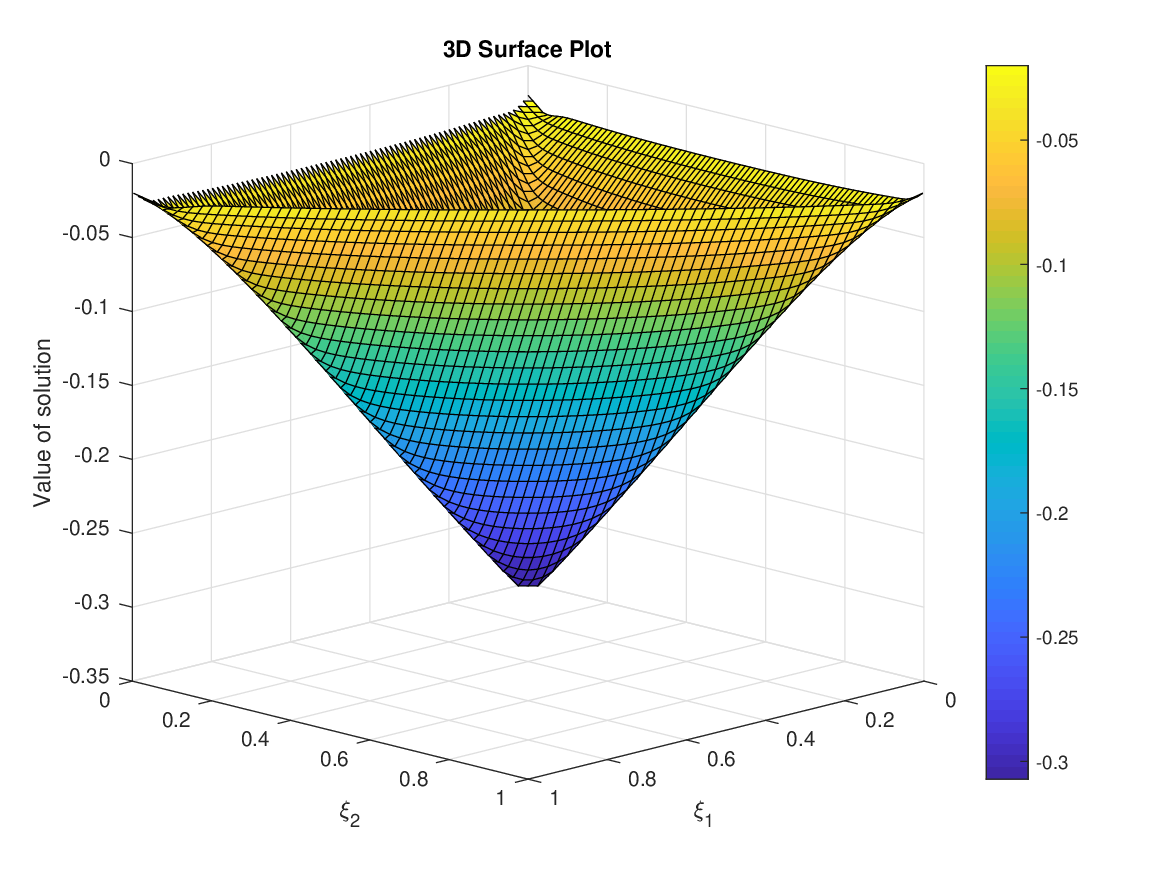}
\end{minipage}
}

\subfloat[$\mathfrak a=\mathcal I^{-2}_1,\lambda_1=0.1$]{
\begin{minipage}[t]{0.35\linewidth}
\centering
\includegraphics[height=4.4cm,width=5.8cm]{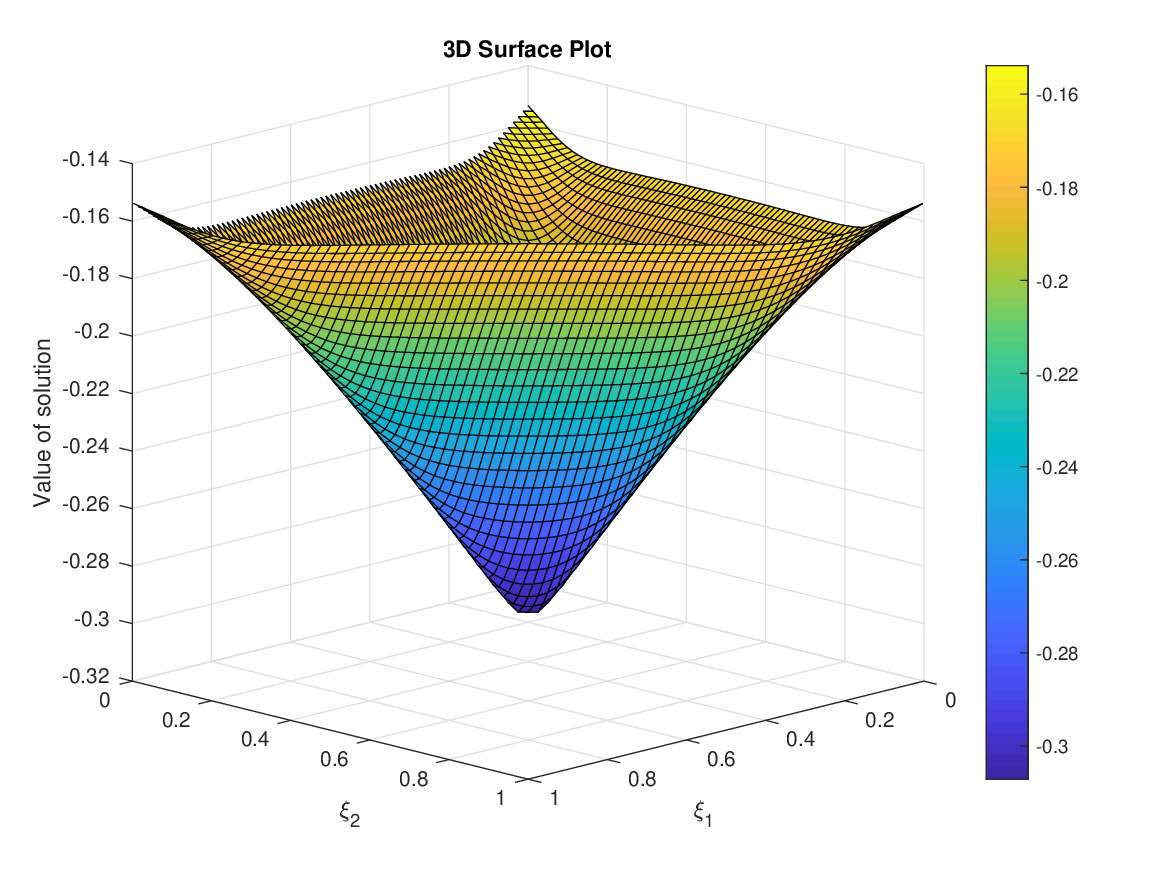}
\end{minipage}
} 
\subfloat[$\mathfrak a=\mathcal I^{-2}_1,\lambda_1=1$]{
\begin{minipage}[t]{0.35\linewidth}
\centering
\includegraphics[height=4.4cm,width=5.8cm]{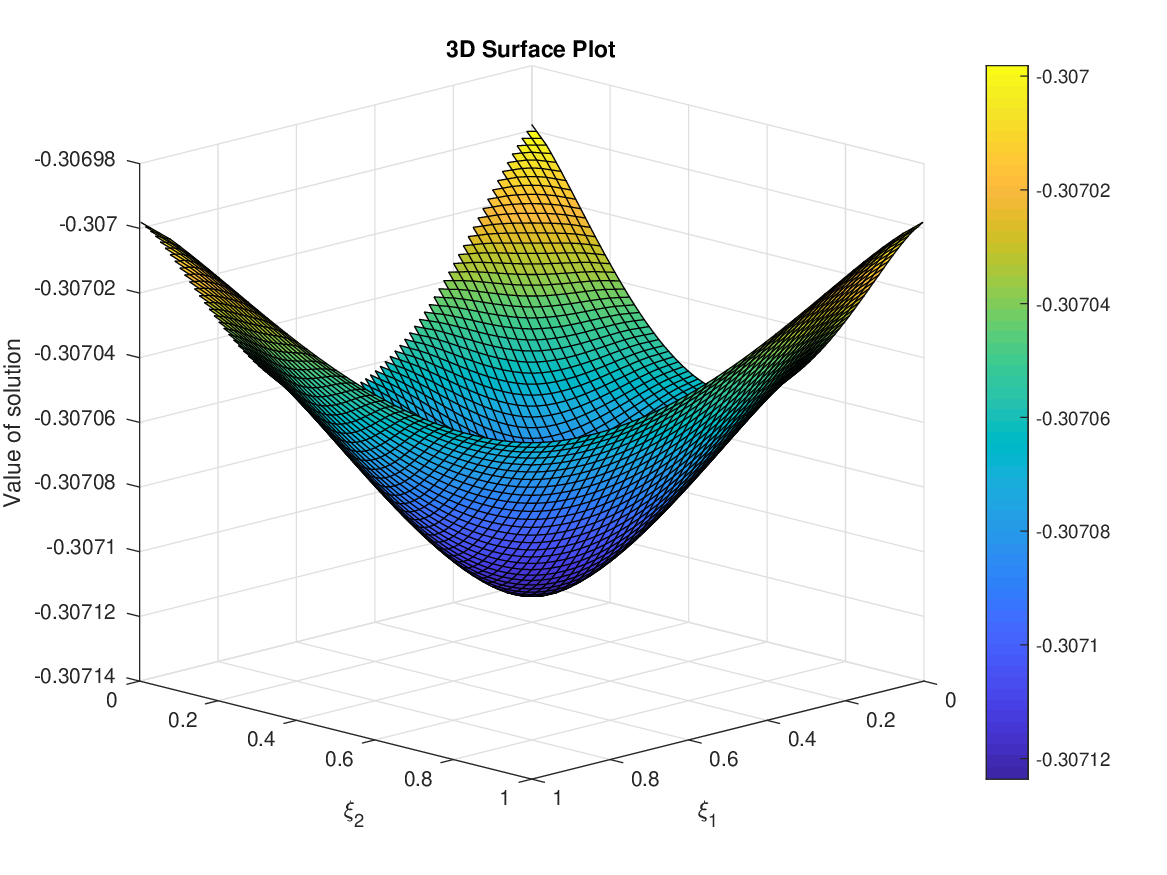}
\end{minipage}
}

\subfloat[$\mathfrak a=\mathcal I^{-2}_1,\lambda_1=0.1$]{
\begin{minipage}[t]{0.35\linewidth}
\centering
\includegraphics[height=4.4cm,width=5.8cm]{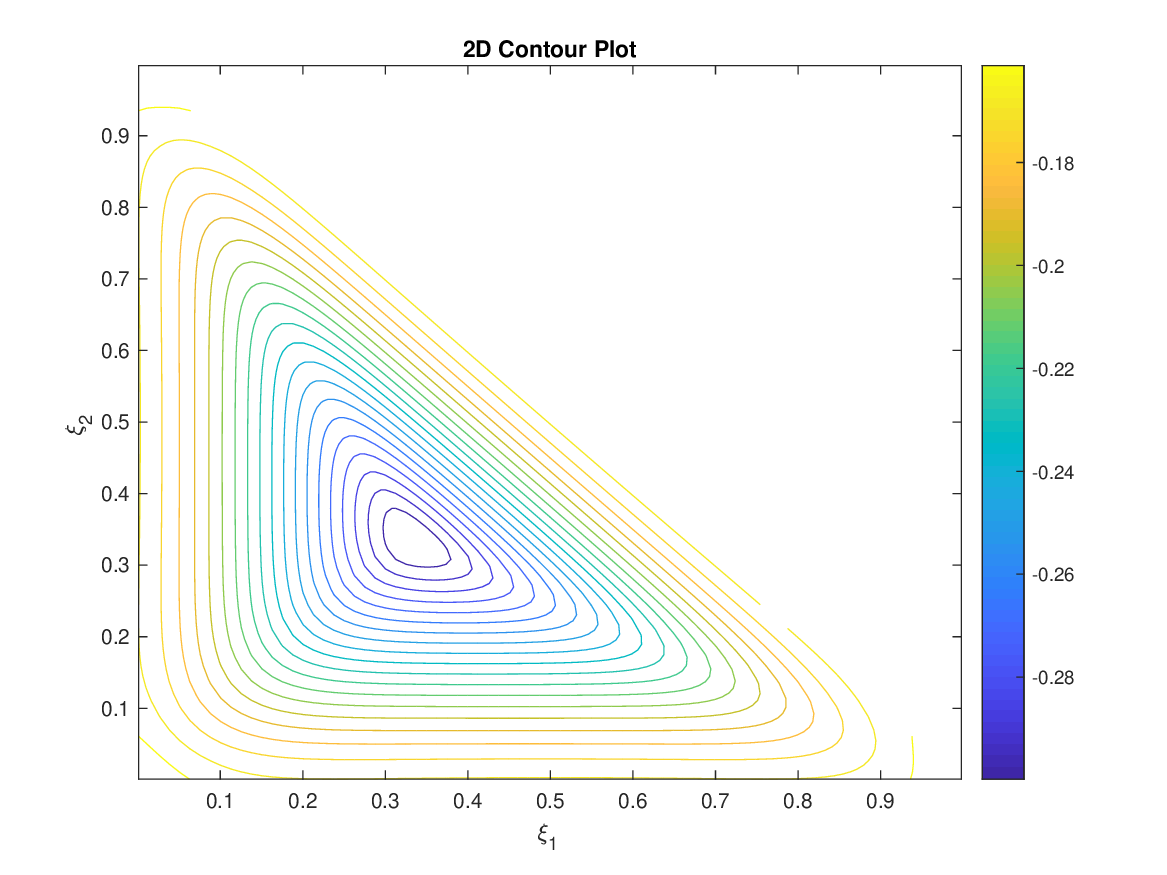}
\end{minipage}
}
\subfloat[$\mathfrak a=\mathcal I^{-2}_1,\lambda_1=1$]{
\begin{minipage}[t]{0.35\linewidth}
\centering
\includegraphics[height=4.4cm,width=5.8cm]{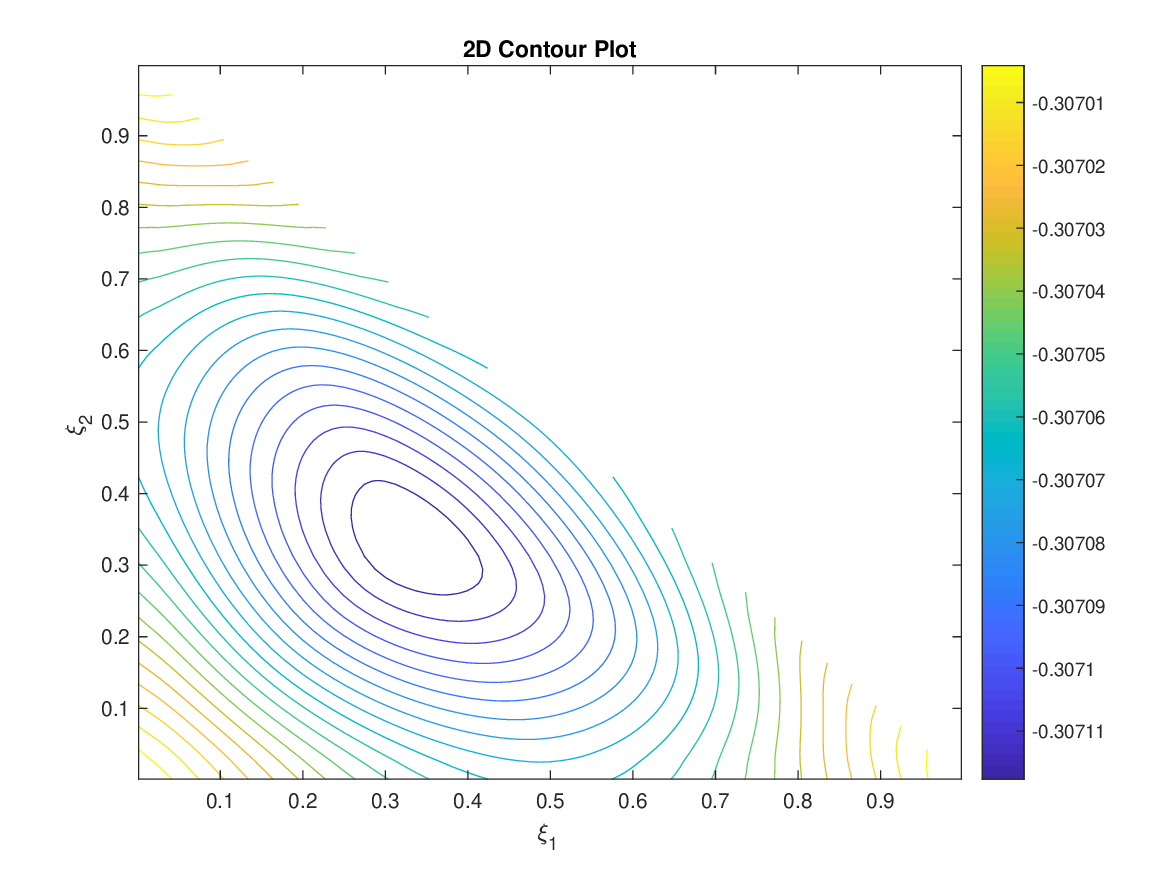}
\end{minipage}
}

\centering
\caption{Numerical solution for $\mathcal H(\xi,p)= \mathcal I_{1}^{-2}(\xi)\|p\|^2_{\xi},\;\mathcal U_0(\xi)=-\min(\xi_i)\cos(\sum_{i=1}^3\xi^2_i),h=0.0125,\epsilon=0.001,\frac{\tau}{h}=0.05,T=2.$}\label{pic4}
\end{figure}

Next, we investigate the behavior of numerical solutions of \eqref{lap_eq} with $\lambda_1=0$, $\mathcal H(\xi,p)=\mathfrak a(\xi)\|p\|^2_{\xi},$ and  $\mathcal U_0(\xi)=\|\xi\|^2_{l^2},$ using different choices of the function $\mathfrak a$. The results are shown in Figure  \ref{pic5}. Subfigures (B)(C) illustrate the numerical solutions for $\mathfrak a(\xi)=(\sum_{i=1}^3\xi^{-\theta}_i)^{-2}$ with $\theta=0.5$ and $\theta=0.3$, respectively. 
As $\theta$ decreases, the numerical solution deviates increasingly from the initial value (subfigure (A)).  
This indicates that smaller values of $\theta$ exhibit a similar effect to an increase in noise intensity. However, regarding the scale of variation in the numerical solution, altering 
$\theta$ has a less pronounced impact compared to changes in noise intensity.
Subfigure (D) shows the numerical solution for the logarithmic coefficient function $\mathfrak a(\xi)=(\sum_{i=1}^3\log(\xi_i))^{-2}$, whose behavior is similar to the case with $\theta=0.3$ in subfigure (C).  
This may indicate the well-posedness of viscosity solutions for equations whose Hamiltonians have logarithmic coefficients.

\begin{figure}[h]
\centering

\subfloat[$T=0$]{
\begin{minipage}[t]{0.35\linewidth}
\centering
\includegraphics[height=4.4cm,width=5.8cm]{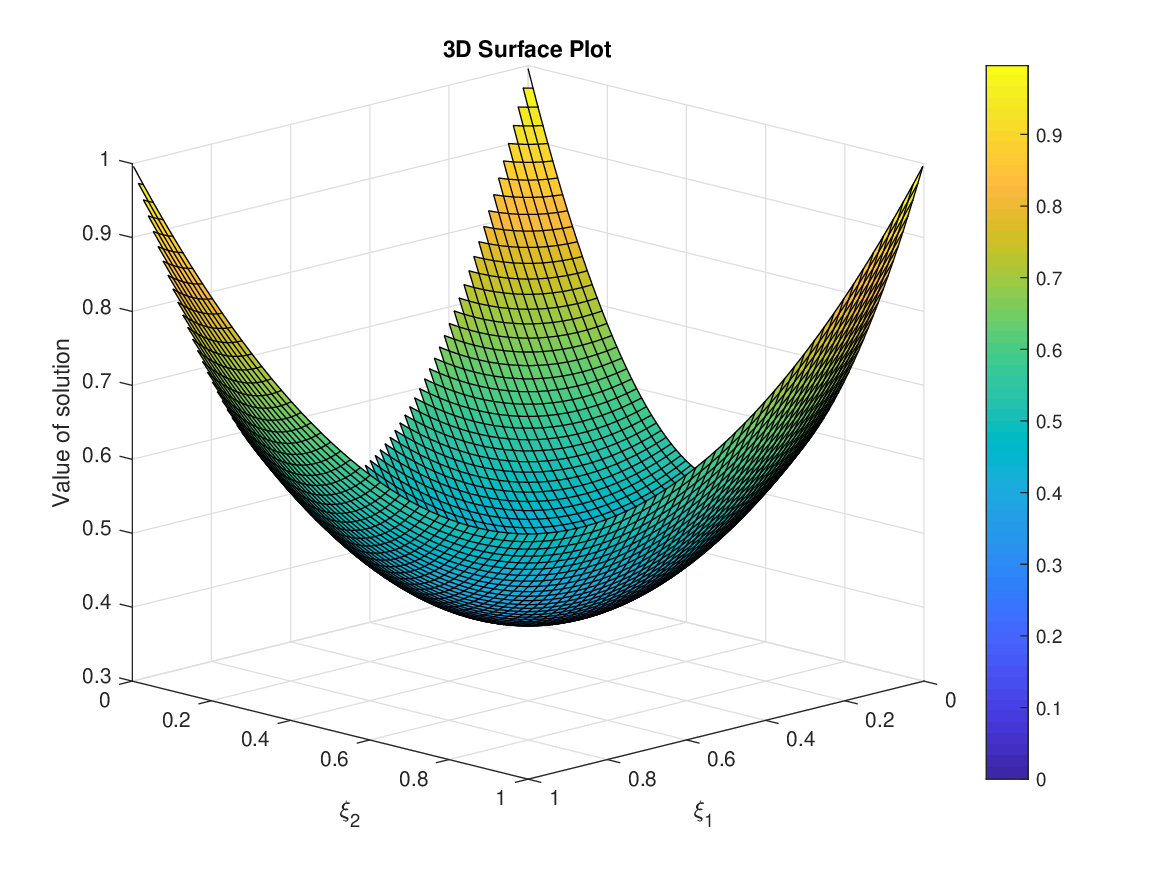}
\end{minipage}
}   
\subfloat[$\mathfrak a=(\sum_{i=1}^3\xi_i^{-\theta})^{-2},\theta=0.5$]{
\begin{minipage}[t]{0.35\linewidth}
\centering
\includegraphics[height=4.4cm,width=5.8cm]{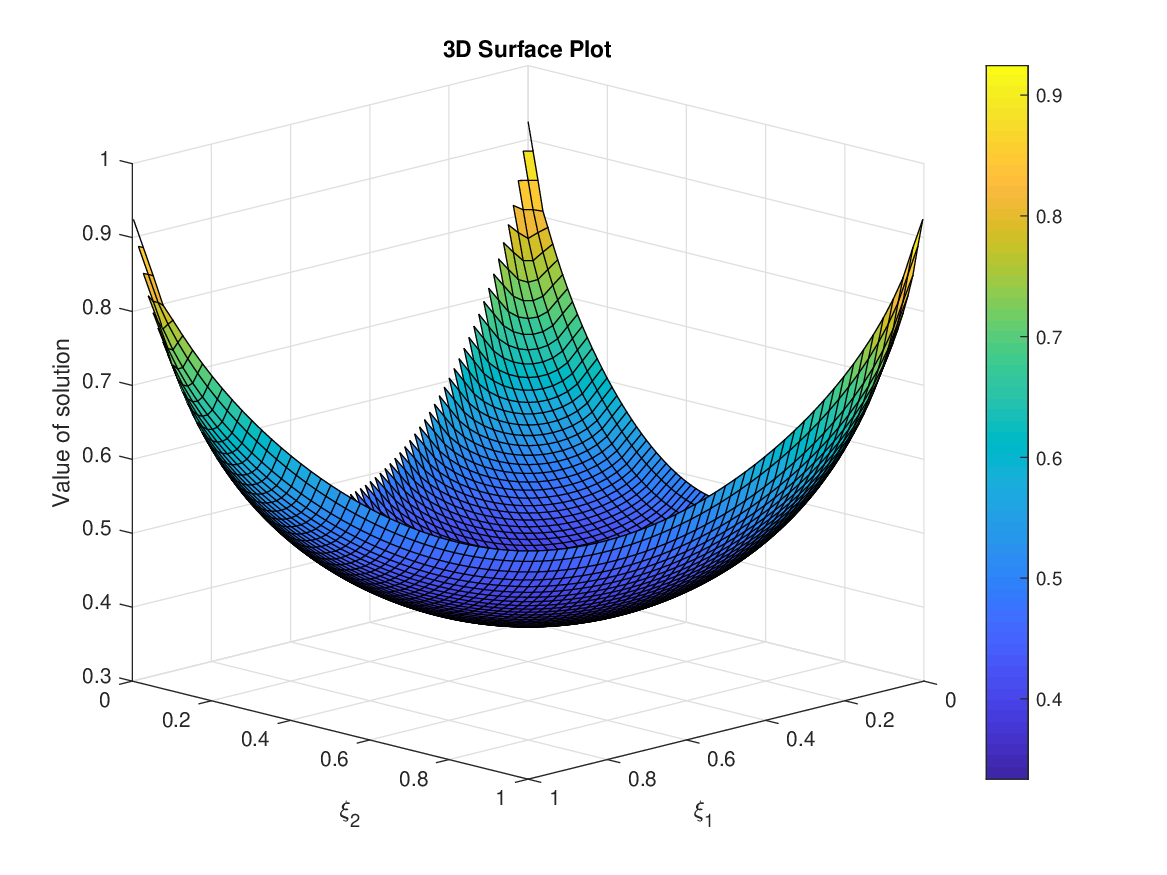}
\end{minipage}
} 

\subfloat[$\mathfrak a=(\sum_{i=1}^3\xi_i^{-\theta})^{-2},\theta=0.3$]{
\begin{minipage}[t]{0.35\linewidth}
\centering
\includegraphics[height=4.4cm,width=5.8cm]{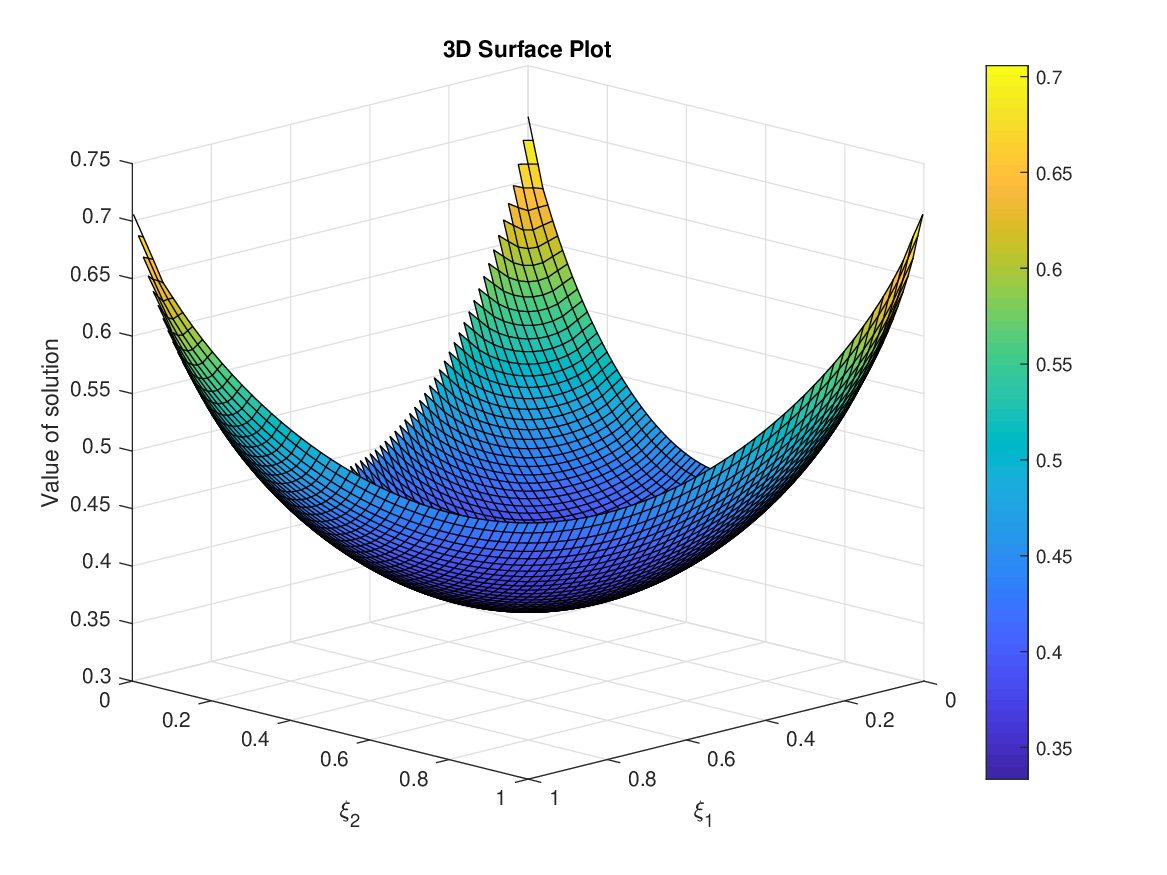}
\end{minipage}
}   
\subfloat[$\mathfrak a=(\sum_{i=1}^3\log(\xi_i))^{-2}$]{
\begin{minipage}[t]{0.35\linewidth}
\centering
\includegraphics[height=4.4cm,width=5.8cm]{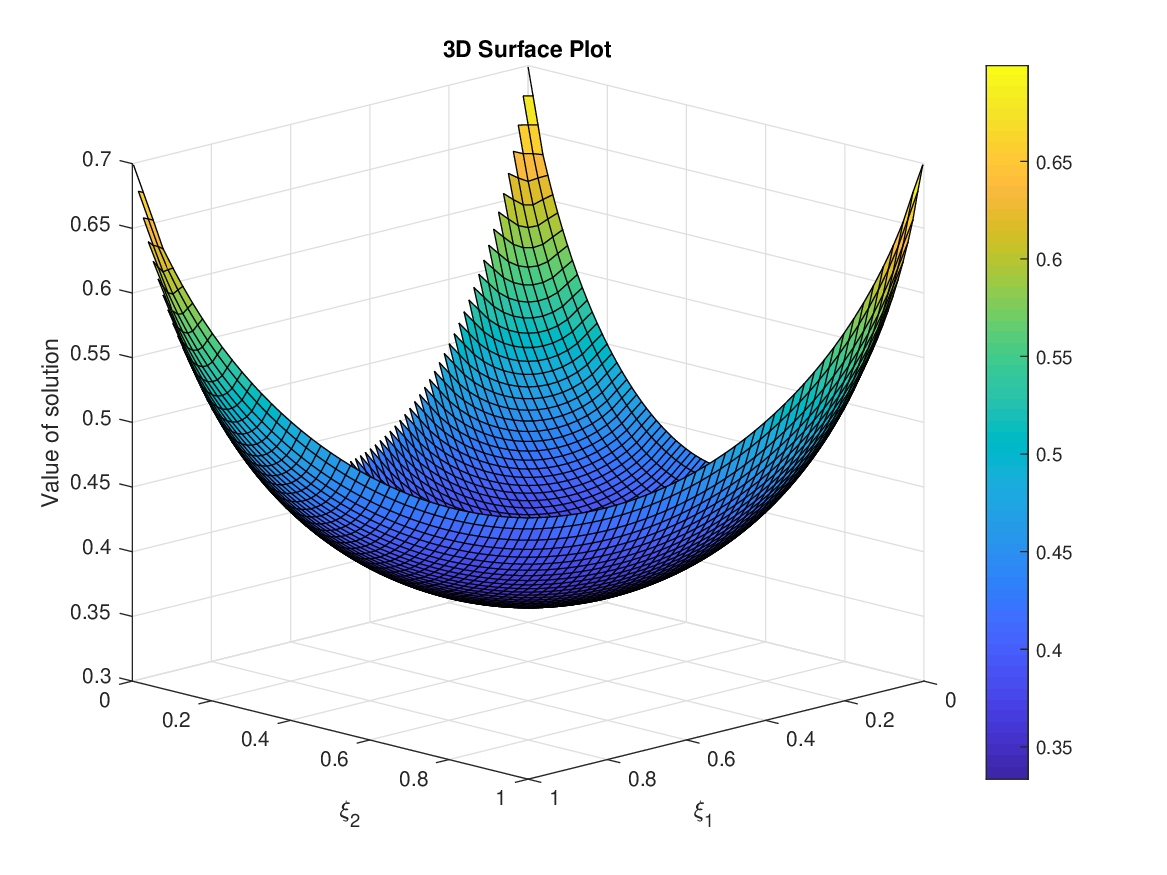}
\end{minipage}
}

\centering
\caption{Numerical solution for $\mathcal H(\xi,p)=\mathfrak a(\xi)\|p\|_{\xi}^2$, $\frac{\tau}{h}=0.1,h=0.0125,\epsilon=0.001,$ (A): initial value $\mathcal U_0(\xi)=\|\xi\|^2_{l^2}$, and (B)--(D): $T=5$.}\label{pic5}
\end{figure}

\end{appendix}

\end{document}